\newcommand{\gap}{\mathbh{g}}
\newcommand{\di}{\mathrm{d}}
\newcommand{\Int}{\operatorname{Int}}
\newcommand{\ev}{\mathbb{E}}
\newcommand{\pr}{\mathbb{P}}
\newcommand{\R}{\mathbb{R}}
\newcommand{\ind}{\mathbf{1}}
\newcommand{\ds}{\displaystyle}
\newcommand{\eps}{\varepsilon}
\newcommand{\nex}{\Theta}
\newcommand{\Z}{\mathcal{Z}}
\newcommand{\T}{T}
\newcommand{\treeroot}{\Lambda}
\newtheorem{theorem}{Theorem}
\newtheorem{corollary}{Corollary}
\newtheorem{lemma}{Lemma}
\newtheorem{proposition}{Proposition}
\newtheorem{lemA}{Lemma}
\newtheorem{step}{Step}
\begin{document}
\begin{frontmatter}

\title{The algebraic difference of two random Cantor sets: The Larsson family}
\runtitle{The difference of random Cantor sets}

\begin{aug}
\author[A]{\fnms{Michel} \snm{Dekking}\thanksref{t1}\corref{}\ead[label=e1]{F.M.Dekking@tudelft.nl}},
\author[B]{\fnms{K\'{a}roly} \snm{Simon}\thanksref{t1,t2}\ead[label=e2]{simonk@math.bme.hu}}
\and
\author[B]{\fnms{Bal\'{a}zs} \snm{Sz\'{e}kely}\thanksref{t3}\ead[label=e3]{szbalazs@math.bme.hu}}
\thankstext{t1}{Supported in part by the NWO-OTKA common project.}
\thankstext{t2}{Supported by OTKA Foundation \#K71693.}
\thankstext{t3}{Supported by HSN Lab.}
\runauthor{M. Dekking, K. Simon and B. Sz\'{e}kely}
\affiliation{Technical University of Delft, Technical University of
Budapest and \break Technical University of Budapest}
\address[A]{M. Dekking\\
Delft Institute of Applied Mathematics\\
Technical University of Delft\\
Mekelweg 4 \\
2628 CD\\
Delft\\
The Netherlands\\
\printead{e1}}
%adresu isvedimo komanda gale!
\address[B]{K. Simon\\
B. Sz\'{e}kely\\
Institute of Mathematics\\
Technical University of Budapest\\
H-1529 P.O. Box 91\\
Budapest\\
Hungary \\
\printead{e2}\\
\phantom{E-mail:\ }\printead*{e3}}
\end{aug}

% HISTORY:
\received{\smonth{3} \syear{2009}}
\revised{\smonth{3} \syear{2010}}

% ABSTRACT
%
\begin{abstract}
In this paper, we consider a family of random Cantor sets on the
line and consider the question of whether the condition that the sum
of the Hausdorff dimensions is larger than one implies the
existence of
interior points in the difference set of two independent copies.
We give a new and complete proof that this is the case for the random
Cantor sets introduced by Per Larsson.
\end{abstract}

% KEYWORDS

\begin{keyword}[class=AMS]
\kwd[Primary ]{28A80}
\kwd[; secondary ]{60J80}
\kwd{60J85}.
\end{keyword}

\begin{keyword}
\kwd{Random fractals}
\kwd{random iterated function systems}
\kwd{differences of Cantor sets}
\kwd{Palis conjecture}
\kwd{multitype branching processes}.
\end{keyword}

\end{frontmatter}

%s1 ###
\section{Introduction}

Algebraic differences of Cantor sets occur naturally in
the context of the dynamical behavior of diffeomorphisms. From
these studies originated a conjecture by Palis and Takens \cite{PT}, relating
the size of the arithmetic difference
\[
C_2-C_1=\{y-x\dvtx x\in C_1,
y\in C_2\}
\]
to the Hausdorff dimensions of the two Cantor sets $C_1$ and $C_2$: if
\begin{equation}\label{P}
\dim_{\mathrm{H}}C_1+\dim_{\mathrm{H}}C_2>1,
\end{equation}
then, \textit{generically}, it should be true that
\[
C_2-C_1 \mbox{ contains an interval}.
\]
For generic dynamically generated \textit{nonlinear} Cantor sets,
this was proven in 2001 by de Moreira and Yoccoz \cite{MY}. The
problem is open for generic linear Cantor sets. The problem was put
into a probabilistic context by Per Larsson in his thesis
\cite{Larssonthesis} (see also \cite{Larsson}). He considers a
two-parameter family of random Cantor sets $C_{a,b}$, and claims to
prove that the Palis conjecture holds for all relevant choices of
the parameters $a$ and $b$. Although the main idea of Larsson's
argument is brilliant, unfortunately, the proof contains significant
gaps and incorrect reasoning. The aim of the present paper is to give a
correct proof of this theorem. The most important error made by
Larsson is as follows: during the construction, a multitype branching process
with uncountably many types appears naturally. The number of
individuals in the $n$th generation
having types which fall into the set $A$ is denoted $\mathcal{Z}_n(A)$
and the probability
measure describing the branching process starting with a single
type-$x$ individual is denoted by $\mathbb{P}_x$.
The argument presented in Larsson's
paper requires that for some positive
$\delta$, $q$, $\rho>1$ and for a set $A$ of which the interior
contains 0,
we have that, \textit{uniformly}, both in $x$ and in $n$, the following
holds:
\begin{equation}\label{16}
{\pr}_x\bigl(\mathcal{Z}_n(A)>\delta\cdot\rho^n\bigr)>q.
\end{equation}

However, the main result in the
theory of general multitype branching processes
\cite{Harris63}, Theorem 14.1, invoked by Larsson implies (\ref{16})
without any uniformity.

Further (as shown in \cite{Chao03}), the idea presented in Larsson's
paper works only in the region (see also Figure \ref{fig:a-b-region}) where
\begin{equation}\label{121}
1-4a-2b+3a^2-6ab>0.
\end{equation}

\begin{figure}

\includegraphics{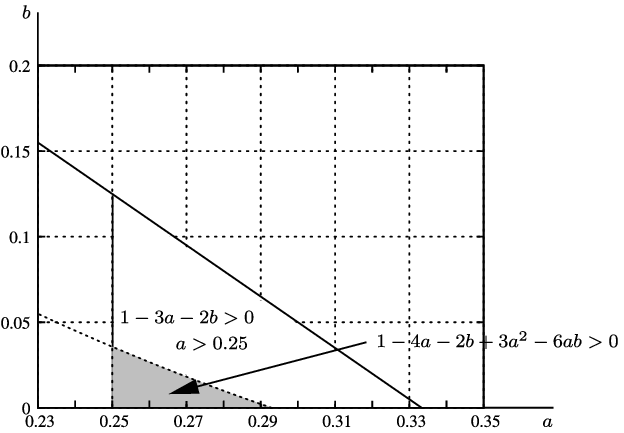}

\caption{Regions described by equations \textup{(\protect\ref{121})} and \textup{(\protect\ref{2})}.}\label{fig:a-b-region}
\end{figure}

Although we use a different setup, the main idea presented here
follows the line of Larsson's proof.

We remark that for linear Cantor sets of a different nature,
the first two authors investigated the same problem in \cite{DekSim}.
Further developments in this direction in \cite{DeDo} lead us to
conjecture that in the
critical case, that is, $\dim_H(C_{a,b})=1/2$, the difference set will
a.s. contain no interval.

%s1.1 ###
\subsection{Larsson's random Cantor sets}\label{LRCS}

It is assumed throughout this paper that
\begin{equation}\label{2}
a>\tfrac{1}{4} \quad \mbox{and} \quad 3a+2b<1.
\end{equation}
The first condition is a growth condition and since
\[
\dim_{\mathrm{H}} C_{a,b}=-\frac{\log2}{\log a},
\]
this condition is equivalent to $\dim_{\mathrm{H}}C_{a,b}>1/2$, which is
equivalent to (\ref{P}).
The second condition is a geometric
condition: Larsson's Cantor set is a natural randomization of the
classical Cantor set; see Figure \ref{fig:Cantor}. In the first
step of the construction, intervals of length $a$ are put into
the intervals $[b,\frac12-\frac{a}{2}]$ and
$[\frac{1}{2}+\frac{a}{2},1-b]$. Dismissing the trivial case
$3a+2b=1$, this obviously requires $3a+2b<1$.
We remark that it is useful to force a forbidden zone of
length at least $a$ in the middle since otherwise the Newhouse
thickness of the Cantor set would be larger than $1$, which yields
an interval in the difference set by Newhouse's theorem (see
\cite{PT}, page~63). The two intervals of length $a$ each have room to
move in an interval of length $\frac12-\frac{a}{2}-b$, that is,
there is a free space of size $\frac12-\frac{a}{2}-b-a$ and we denote
this gap by $\gap$:
\[
\gap:= \frac{1-3a-2b}{2}.
\]

\begin{figure}

\includegraphics{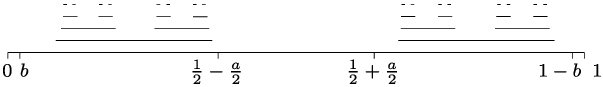}

\caption{The construction of the Cantor set $C_{a,b}$. The figure
 shows $C_{a,b}^1,\dots,C_{a,b}^4$.}\label{fig:Cantor}
\end{figure}

%%%%%%%%%%%%%%%%%%%%%%%%%%%

The construction is as follows: first, remove the middle $a$ part,
then the $b$ parts from both the beginning and the end of the unit
interval. Then, place intervals of length $a$ according to a
uniform distribution in the remaining two open spaces
$[b,\frac{1}{2}-\frac{a}{2}]$ and
$[\frac{1}{2}+\frac{a}{2},1-b]$. These two randomly
chosen intervals of length $a$ are called the \textit{level-one
intervals} of the random Cantor set $C_{a,b}$. We write $C_{a,b}^{1}$
for their union. In both of the two level-one intervals, we repeat the
same construction independently of each other and of the previous step.
In this way, we obtain four disjoint intervals of length $a^2$. We
emphasize that, because of independence, the relative positions of
these second level intervals in the first level ones are, in general,
completely different.
Similarly, we construct the $2^n$ level-$n$ intervals of length $a^n$.
We call their union $C_{a,b}^{n}$. Larsson's random Cantor set is then
defined by
\[
C_{a,b}:=\bigcap_{n=1}^\infty C_{a,b}^{n}.
\]
See Figure \ref{fig:Cantor}.

The next theorem was stated by P. Larsson.

\begin{theorem}\label{17} Let
$C_1$, $C_2$ be independent random Cantor sets having the same
distribution as
$C_{a,b}$ defined above. Then, the
algebraic difference $C_2-C_1$ almost surely contains an interval.
\end{theorem}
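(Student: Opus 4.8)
The plan is to reduce Theorem~\ref{17} to a positive-probability statement by a zero--one law and then to realise the event via a supercritical multitype branching process attached to the construction.

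\textbf{Step 1: a zero--one law.} Put $D:=C_2-C_1$. Splitting $C_1$ and $C_2$ into their two level-one intervals gives
\[
 D=\bigcup_{i,j=1}^{2}\Bigl(a\bigl(C_2^{(i)}-C_1^{(j)}\bigr)+(V_i-U_j)\Bigr),
\]
with $U_1,U_2,V_1,V_2$ the independent uniform placements and $C_1^{(1)},C_1^{(2)},C_2^{(1)},C_2^{(2)}$ independent, $C_k^{(\cdot)}$ distributed as $C_k$. Hence $C_2^{(1)}-C_1^{(1)}$ and $C_2^{(2)}-C_1^{(2)}$ are independent, each distributed as $D$, and $D$ contains an interval as soon as either of them does; so $1-p\le(1-p)^2$ where $p:=\pr(D\text{ contains an interval})$, forcing $p\in\{0,1\}$. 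It thus suffices to prove $p>0$.

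\textbf{Step 2: a covering problem and the branching process.} For $t\in\mathbb R$ let $\Z_n(t)$ count the pairs $(I,J)$, $I$ a level-$n$ interval of $C_1$ and $J$ of $C_2$, with $(I+t)\cap J\ne\emptyset$, i.e.\ $t\in J-I$. Since $(C_1^n+t)\cap C_2^n$ is compact and decreasing in $n$, one has $t\in D$ iff $\Z_n(t)\ge1$ for every $n$; consequently $D\supseteq L$ iff for every $n$ the $t$-intervals $J-I$ of the level-$n$ pairs cover $L$, and this covering can only deteriorate. So I would aim to show that with positive probability some interval is never left uncovered. Organise the relevant pairs as a branching process: after the affine map taking $J$ to $[0,1]$, a pair $(I,J)$ is recorded by a \emph{type} $x$ in a fixed compact parameter space (the rescaled offset of $I$, together with the rescaled part of the $t$-axis it is still to cover). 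A type-$x$ individual has at most four children --- one for each pairing of a level-one subinterval of $I$ with one of $J$ --- present when that subpair still overlaps over the relevant $t$-range, with the new (rescaled) configuration as its type; rescaling contracts the parent's offset by $a^{-1}$ and adds the uniform displacements. This is a multitype branching process with uncountably many types, of the kind treated in \cite[Th.~14.1]{Harris63}.

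\textbf{Step 3: supercriticality.} I would estimate the mean-offspring operator $M$ and show its Perron--Frobenius eigenvalue $\rho$ exceeds $1$ throughout the region (\ref{2}); the point is that the level-one subintervals are placed at \emph{random} positions, so descendant $t$-intervals build up multiplicity instead of lying on a rigid grid. In the subregion (\ref{121}) a crude lower bound for the number of surviving children --- morally the growth condition $a>\tfrac14$, i.e.\ $\dim_{\rm H}C_1+\dim_{\rm H}C_2>1$ --- already gives $\rho>1$, while on the rest of (\ref{2}) one must estimate $M$ more carefully, since configurations near the boundary of the parameter space systematically lose offspring. Granting $\rho>1$, \cite[Th.~14.1]{Harris63} gives: from a good starting type the process survives with positive probability, $\Z_n/\rho^n$ converges to a strictly positive limit on survival, and the empirical type distribution converges to a stationary law $\nu$.

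\textbf{Step 4: from survival to an interval --- the main obstacle.} Survival only shows that a fixed $t$ lies in $D$ with positive probability, hence (again by a zero--one law) that $D$ has positive Lebesgue measure almost surely; but, as the fixed-grid analogue of Mandelbrot percolation shows, that is strictly weaker than containing an interval, and this is exactly where Larsson's argument fails, through the unjustified \emph{uniform} estimate (\ref{16}). The fix is that $\nu$ is carried by a compact set $G$ of \emph{robust} types, away from the boundary types responsible for the lack of uniformity, and on $G$ the relevant quantities --- survival probability, probability that the four children jointly cover a given point, growth rate --- are, by compactness and continuity, bounded away from the bad values uniformly. I would then run the process to a large generation $N$ on the event (of positive probability, by Step 3, once $K$ is fixed) that some fixed interval $L_0$ is covered by at least $K$ pairs of robust type above each of its points, and show, using only the uniform $G$-estimates, that the probability that the uncovered set ever reaches a slightly shorter $L_1\subset L_0$ is $<1$: by Borel--Cantelli over the scales $n>N$, a fresh gap over $L_1$ at scale $n$ forms only if a whole subpopulation of robust individuals dies over a point, and since that multiplicity has itself grown geometrically, the probability of this decays fast enough in $n$ (once $K$ is large) for the sum to stay below $1$. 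Then $\pr(L_1\subseteq D)>0$, so $p>0$ and the theorem follows from Step 1. The hard part is precisely this last step --- producing a genuine interval rather than merely a set of positive measure, while honestly controlling the non-uniform dependence of every estimate on the uncountable type space, the defect that invalidated the original proof.
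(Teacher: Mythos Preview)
Your outline follows the paper's strategy closely through Step~3 --- the zero--one law, the branching process of square pairs with a real type parameter, and supercriticality via Harris --- and you correctly locate the crux at Step~4. But Step~4 as written has a real gap, not just an under-specification.

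The sentence ``by Borel--Cantelli over the scales $n>N$ \dots\ since that multiplicity has itself grown geometrically, the probability of this decays fast enough'' is circular. At generation $N$ you have arranged multiplicity $\ge K$ over every $t\in L_0$, but to cover $L_1$ at scale $n$ you must discretise into $\sim a^{-n}$ test points, and over each such point the multiplicity is still only guaranteed to be $\ge K$: the geometric growth you invoke holds only \emph{conditionally on survival}, which is what you are trying to prove. A union of $a^{-n}$ events each of probability $\le(1-q)^K$ diverges. The paper resolves this with two ingredients your sketch lacks. First, a hierarchical scheme with \emph{increasing} generation jumps: at step $k$ one refines the target interval into $4^{(k+1)N}$ subintervals with centres $z_{i_1\dots i_k}$ and moves from level $g_{k-1}$ to level $g_k=g_{k-1}+(k+1)N$; conditioning on success at step $k-1$ already supplies $\delta\rho^{kN}$ independent good squares over each centre, so the failure probability at step $k$ is $\le 4^{(2+\cdots+(k+1))N}(1-q)^{\delta\rho^{kN}}$, and the product over $k$ converges (Lemmas~\ref{lem:subset}--\ref{lem:product}). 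Second, the Main Lemma demands $\delta\rho^n$ squares with centre \emph{above} the line and $\delta\rho^n$ with centre \emph{below}; this $\pm$ split (Lemma~\ref{lem:B_k}) is exactly what ensures that when one passes from $z_{i_1\dots i_{k-1}}$ to any refined centre $z_{i_1\dots i_k}$, at least one packet of squares keeps its nice intersection --- without it the squares good for $z$ need not be good for a nearby $z'$, and the induction breaks. Finally, ``robust types by compactness and continuity'' understates Step~3: outside the region~(\ref{121}) the set of survivable types is not an interval but has holes that must be excised by an explicit recursion (Section~\ref{sec:up_kernel}) before Perron--Frobenius applies, and the uniformity on $[-K,K]$ then requires the first-generation decomposition and weak-convergence argument of Section~\ref{UEG}, not a soft compactness appeal.
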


This paper is organized as follows. In the next section, we give an
elementary proof of the fact that the probability that $C_2-C_1$
contains an interval is either $0$ or $1$.
For the main part of the proof, our starting point is the observation
that $C_2-C_1$ can be viewed as a $45^{\circ}$ projection of the
product set $C_1\times C_2$. This leads, in Section \ref{38}, to the
introduction of the level-$n$ squares formed
as the product of level-$n$ intervals of the Cantor sets $C_1,C_2$.
We remark that Larsson does not use these squares at all. Then, based
on the family of these squares we will construct the intrinsic
branching process and state our \hyperref[15]{Main Lemma}, which will replace
(\ref{16}). In Section \ref{47}, we prove Theorem \ref{17},
assuming the \hyperref[15]{Main Lemma}. In Sections \ref{48}--\ref{sec:main}, we
give a proof of the \hyperref[15]{Main Lemma}.
%%%% For the convenience of the reader
%%%% in the appendix ( Section \ref{49}) we summarize the most
%important definitions and theorems
%%%% from the Theory of Branching Processes of Multitype.

%s2 ###
\section{A 0--1 law}\label{0--1-law}

Undoubtedly, Larsson introduced his Cantor sets as a natural
randomization of the classical
triadic Cantor set. Actually, these sets can also be considered as very
simple examples of
statistically self-similar sets, which permits us to give a simple
proof of the 0--1 law for the
interval property. A set $C$ is \textit{statistically self-similar} if there is a
collection of $m$ random functions
$\{\varphi_1,\dots,\varphi_m\}$ such that
\[
C=\bigcup_{i=1}^m\varphi_i(C_i),
\]
where the $C_i$ are independent random sets with the same distribution
as $C$.
For Larsson's sets, $m=2$ and the random functions are the affine functions
\[
\varphi_1(x)=ax+b+U_1 \quad\mbox{and}\quad \varphi_2(x)=ax+(1+a)/2+U_2,
\]
where $U_1$ and $U_2$ are independent random variables, both uniformly
distributed over $[0,\gap]$.

\begin{proposition}\label{01law}
$\pr(C_2-C_1 \supset I)=0$ or $1$.
\end{proposition}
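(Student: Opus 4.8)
The plan is to exploit the statistical self-similarity of the construction to show that the event $E := \{C_2 - C_1 \supset I \text{ for some interval } I\}$ is, up to null sets, a tail event for an appropriate independent sequence, and then invoke Kolmogorov's 0-1 law. Concretely, $C_2 - C_1$ is the $45^\circ$ projection of $C_1 \times C_2$, and $C_1 \times C_2$ is covered by the $4^n$ level-$n$ squares $Q = C_1^{1,i_1\cdots i_n} \times C_2^{1,j_1\cdots j_n}$, where each $C_i^{1,i_1\cdots i_n}$ is the piece of $C_i$ inside a level-$n$ interval. The projection of each such square is a translate of a rescaled copy (scaling factor $a^n$) of $C_2' - C_1'$ for independent Cantor sets $C_1', C_2'$ distributed as $C_{a,b}$. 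Since having non-empty interior is invariant under translation and scaling, the event "the projection of square $Q$ contains an interval" has the same probability $p := \pr(E)$ for every $Q$, and these events depend on disjoint collections of the i.i.d. randomness used in the construction.

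The key steps, in order, are as follows. First I would set up the probability space: the randomness consists of an i.i.d. family $\{U_v\}$ of uniform random variables indexed by the nodes $v$ of the binary tree, two independent such families for $C_1$ and $C_2$, where $U_v$ encodes the position of the offspring intervals inside the interval coded by $v$. Second, I would observe that $C_2 - C_1 \supset I$ for some interval $I$ if and only if the projection of $C_1 \times C_2$ contains an interval, and that this holds if and only if the projection of at least one level-$n$ square contains an interval — here the nontrivial direction uses that the projection is a countable union (over the $4^n$ squares) of compact sets, and a finite union of compact sets with empty interior has empty interior, so if the whole projection has non-empty interior then by Baire some level-$n$ piece does; I should be a little careful that "contains an interval" for the union really localizes to a single square, which follows because the projections of the $4^n$ level-$n$ squares are themselves compact and their union is the projection, so if the union contains an open interval, that interval is covered by finitely many of these compacts, hence (Baire category in $\mathbb{R}$) one of them has non-empty interior. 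Third, I would note that for fixed $n$ the events $E^{(n)}_Q := \{\mathrm{proj}(Q) \text{ has non-empty interior}\}$ over the $4^n$ level-$n$ squares $Q$ are independent (they use disjoint blocks of the $U_v$'s) and each has probability $p$ by self-similarity. Hence $\pr(E) = \pr(\bigcup_Q E^{(n)}_Q)$, and if $p < 1$ then $\pr(\bigcup_Q E^{(n)}_Q) \le 1 - (1-p)^{4^n} \to 1$, which is consistent, so this alone does not finish it — the cleaner route is: $\pr(E^c) = \pr(\text{no level-}n\text{ square projects onto an interval}) \le \prod_Q \pr((E^{(n)}_Q)^c) = (1-p)^{4^n}$. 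If $p>0$ this forces $\pr(E^c)=0$, i.e. $p=1$. So it remains only to rule out $0<p<1$, which the above does: $p>0 \Rightarrow p=1$. Equivalently $p \in \{0,1\}$.

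The main obstacle I anticipate is the "localization" step — justifying rigorously that if the full $45^\circ$ projection of $C_1\times C_2$ contains an interval then already the projection of a single level-$n$ square does (for some, hence by exchangeability for a positive-probability event, "every" in the distributional sense). The subtlety is that $C_2 - C_1$ being exactly the projection, it is a decreasing intersection over $m$ of the projections of the level-$m$ approximations $C_1^m \times C_2^m$, and one must pass the interval property through this intersection; the Baire-category argument on the compact set $C_2 - C_1 = \bigcup_{Q \text{ level } n} \mathrm{proj}(Q)$ (a genuine finite union of compact sets for each fixed $n$) handles this cleanly, but one should state it carefully. Everything else — the invariance of the interval property under affine maps, the independence of the sub-constructions, and the distributional identity between a level-$n$ square's projection and a rescaled copy of $C_2-C_1$ — is routine given the setup in Section \ref{LRCS}.
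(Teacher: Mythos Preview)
Your independence claim in the third step is incorrect, and this is a genuine gap. The $4^n$ level-$n$ squares do \emph{not} use disjoint blocks of the $U_v$'s: a level-$n$ square is indexed by a pair $(v_1,v_2)$ of level-$n$ nodes in the two binary trees, and the event $E^{(n)}_Q$ depends on the subtrees below $v_1$ (in the $C_1$ tree) and below $v_2$ (in the $C_2$ tree). Two squares in the same row of the $2^n\times 2^n$ grid (same $v_2$) share all of the $C_2$-randomness below $v_2$; for instance $E^{(1)}_{Q_1}$ and $E^{(1)}_{Q_2}$ in the paper's labeling both depend on the upper level-$1$ interval of $C_2$ and are therefore dependent---indeed positively correlated, so your product bound $\pr\bigl(\bigcap_Q (E^{(n)}_Q)^c\bigr)\le (1-p)^{4^n}$ is not merely unjustified but points the wrong way on such pairs.

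The fix is immediate and recovers exactly the paper's argument: restrict to \emph{two} squares that share neither a row nor a column, e.g.\ $Q_2$ and $Q_4$ at level $1$ (equivalently, the differences $C_2^{1,1}-C_1^{1,1}$ and $C_2^{1,2}-C_1^{1,2}$). These use disjoint subtrees in both factors, hence are genuinely independent, each event $E^{(1)}_{Q_i}$ has probability $p$ by self-similarity, and the trivial inclusion $E^{(1)}_{Q_2}\cup E^{(1)}_{Q_4}\subset E$ already yields
\[
1-p=\pr(E^c)\le \pr\bigl((E^{(1)}_{Q_2})^c\cap(E^{(1)}_{Q_4})^c\bigr)=(1-p)^2,
\]
so $p\le p^2$ and $p\in\{0,1\}$. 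Note that your Baire/localization step---showing that $E$ is actually \emph{equal} to $\bigcup_Q E^{(n)}_Q$---is unnecessary: only the trivial containment $E^{(n)}_Q\subset E$ is used, since you are bounding $\pr(E^c)$ from above.
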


\begin{pf}
For $1\leq i,j\leq2,$ let $C_{i,j}$ be independent copies of
$C=C_{a,b}$ and let
\[
C_1=\varphi_1(C_{1,1})\cup\varphi_2(C_{1,2}),\qquad C_2=\varphi_1(C_{2,1})\cup\varphi_2(C_{2,2})
\]
be the self-similarity equations for $C_1$ and $C_2$.
We will also write ``$C_2-C_1$ contains an
interval'' equivalently as ``$C_2-C_1$ has nonempty interior.''

Using the facts that for arbitrary subsets $A,B,C$ and $D$ of $\R,$
\[
(A\cup B)-(C\cup D)\supset(A-C)\cup(B-D),
\]
that $\varphi(A-B)=\varphi(A)-\varphi(B)$ for affine functions
$\varphi\dvtx\R\rightarrow\R$
and that affine functions are continuous,
we can set up the following chain of (in)equalities:
\begin{eqnarray*}
p&:=&\pr(C_2-C_1 \supset I)\\
&=& 1- \pr\bigl(\Int(C_2-C_1) = \varnothing\bigr)\\
&\ge& 1- \pr\bigl(\Int\bigl(\varphi_1(C_{2,1})-\varphi_1(C_{1,1})\bigr)=\varnothing,\Int\bigl(\varphi_2(C_{2,2})-\varphi_2(C_{1,2})\bigr)= \varnothing\bigr)\\
&=& 1- \pr\bigl(\Int\bigl(\varphi_1(C_{2,1})-\varphi_1(C_{1,1})\bigr)=\varnothing\bigr)\pr\bigr(\Int\bigr(\varphi_2(C_{2,2})-\varphi_2(C_{1,2})\bigr)= \varnothing\bigr)\\
&=& 1- \pr\bigl(\Int\bigl(\varphi_1(C_{2,1}-C_{1,1})\bigr) =\varnothing\bigr)\pr\bigl(\Int\bigl(\varphi_2(C_{2,2}-C_{1,2})\bigr)= \varnothing\bigr)\\
&=& 1- \pr\bigl(\Int\bigl(C_{2,1}-C_{1,1}) =\varnothing\bigr)\pr\bigl(\Int(C_{2,2}-C_{1,2})= \varnothing\bigr)\\
&=& 1-(1-p)^2.
\end{eqnarray*}
This implies that $p\le p^2$ and hence $p=0$ or 1.
\end{pf}

%s3 ###
\section{\texorpdfstring{Notation and the \protect\hyperref[15]{Main Lemma}}{Notation and the Main Lemma}}\label{46}

In the remainder of the paper, we fix a pair $(a,b)$ satisfying
condition (\ref{2}) and
always deal with Larsson's Cantor sets, so we will suppress the labels $a,b$.

%s3.1 ###
\subsection{The geometry of the algebraic difference $C_2-C_1$}\label{38}

The $45^{\circ}$ projection of a point $(x_1,x_2)\in\mathbb{R}^2$ onto
the $x_2$-axis is denoted by $\operatorname{Proj}_{45^{\circ}}$. That is,
\[
\operatorname{Proj}_{45^{\circ}}(x_1,x_2):=x_2-x_1.
\]

The following trivial fact is the motivation for constructing our
branching process of labeled squares:

\[
x \in \operatorname{Proj}_{45^{\circ}}(C_1\times C_2)\quad \mbox{if and only if}\quad x \in C_2-C_1.
\]

So,
\[
C_2- C_1=\bigcap_{n=0}^{\infty}\operatorname{Proj}_{45^{\circ}}(C_{1}^{n}\times C_{2}^{n}).
\]

We can naturally label the squares in $C_{1}^{n}\times C_{2}^{n}$ as follows:
we call the upper-left first level square $Q_1$ and continue labeling
the first level squares $Q_2,Q_3 ,Q_4$ in the clockwise direction;
then, within each of these squares, we continue in this way; see Figure
\ref{fig:only_squares}.

\begin{figure}

\includegraphics{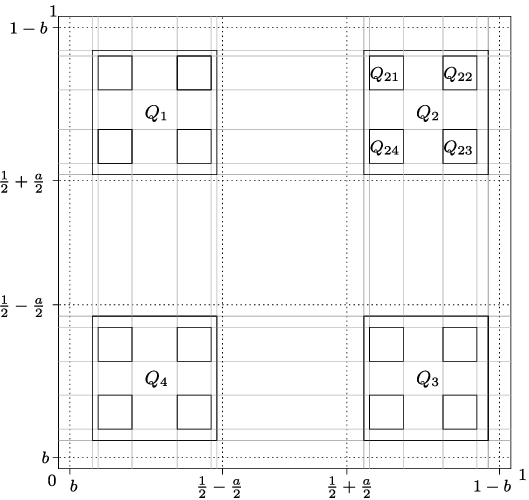}

\caption{The first level squares $Q_1,\dots,Q_4$ and four second
level squares $Q_{21}$, $Q_{22}$, $Q_{23}$, $Q_{24}$.}\label{fig:only_squares}
\end{figure}

For an $x \in[-1,1],$ we write $e(x)$ for that line with
slope $1$ which intersects the vertical axis at $x$. As we observed
above
\begin{equation}\label{3}
x \in C_2-C_1 \quad\mbox{if and only if}\quad e(x )\cap(C_1\times
C_2)\ne\varnothing.
\end{equation}

Fix $x $ and an arbitrary $n$.
Let $\mathcal{S}_n$ be the set of all $a^n\times a^n$ squares
contained in $[0,1]^2$. Note that for every $Q\in\mathcal{S}_n$, by the statistical
self-similarity of the construction, the probability of the event $e(x)\cap(Q\cap(C_1\times C_2))\ne\varnothing$ conditional on
$Q\subset C_1^n\times C_2^n$ is equal to the probability of
the event $e(\Phi)\cap(C_1\times C_2)\ne\varnothing$, where we
construct $\Phi=\Phi(Q,x)$ as follows: we rescale the square $Q$
(which is an $a^n\times a^n$ square) by the factor $1/a^n$, then
we choose $\Phi$ such that the line segment $e(\Phi)\cap[0,1]^2$ is
the rescaled copy of $e(x )\cap Q$; see Figure \ref{fig:rescale}. More
precisely, if $(u,v)$ is the lower-left corner of $Q$, that is,
$Q=[u,u+a^n]\times[v,v+a^n]$, then we define
\begin{equation}\label{eq:Phi}
\Phi(Q,x ):=\cases{
\displaystyle{\frac{u-v+x }{a^n}}, &\quad if $e(x$) intersects $Q$,\cr
\displaystyle{\Theta}, &\quad otherwise,
}
\end{equation}
where $\Theta$ is a symbol representing the emptiness of the intersection.
Observe that $\Phi(Q,x )>0 $ if and only if the
center of $Q$ is located below the line $e(x )$ and $e(x)$ meets $Q$.
Further, $\Phi
(Q,x )=1$ if $e(x )$ intersects $Q$ at the upper-left
corner and $\Phi(Q,x )=-1$ if $e(x )$ intersects $Q$ at the
lower-right corner.

\begin{figure}\label{golden}

\includegraphics{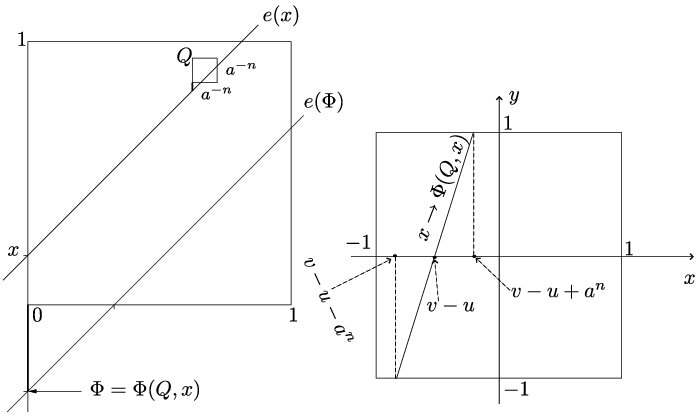}

\caption{A level-$n$ square $Q$ and its rescaled type $\Phi(Q,x)$.}\label{fig:rescale}
\end{figure}

%s3.2 ###
\subsection{The probability space}\label{probab_space_of_Q}

We write $\mathcal{T}:=\bigcup_{n=0}^{\infty}\{1,2\}^n$
for the dyadic tree,
with nodes $\underline{i}_n=i_1i_2\dots i_n,$ where $i_k$ is $1$ or
$2$, and root
$\treeroot$.
For the construction of Larsson's Cantor set, the probability space is
$\Omega_1=[0,\gap]^\mathcal{T}$ [recall that $\gap
=(1-3a-2b)/2$]. An element of $\Omega_1$ is denoted by $U$, that is,
the value at the node $i_1i_2\dots i_n$ is $U_{i_1i_2\dots i_n}$. The
corresponding
$\sigma$-algebra is
$\mathcal{B}_1:=\prod_{\mathcal{T}}\mathcal{B}[0,\gap]$.
Finally, the
probability measure for Larsson's Cantor set is
\[
{\pr}_1:=\delta_0\times\prod_{\mathcal{T}\setminus\{\treeroot\}}\operatorname{Uniform}[0,\gap],
\]
where $\delta_0$ is the Dirac mass at 0 associated with the mass at
the root $\Lambda$. Note that the randomness starts at level 1.
So, the probability space
for $C_1\times C_2$ is as follows:
\begin{equation}\label{18}
\Omega:=\Omega_1\times\Omega_1,\qquad\mathcal{B}:=\mathcal{B}_1\times\mathcal{B}_1,\qquad{\pr}:={\pr}_1\times{\pr}_1.
\end{equation}

An element of $\Omega$ is a pair of labeled binary trees. The $4^n$
level-$n$ pairs of indices $(i_1i_2\dots i_n, j_1j_2\dots j_n)$ are naturally
associated with level-$n$ squares $Q'_{(i_1i_2\dots i_n, j_1j_2\dots
j_n)}$ of size $a^n\times a^n$ whose relative \vspace{1pt} positions are  given by
$U_{i_1i_2\dots i_n}$ and $U_{j_1j_2\dots j_n}$.
Note, however, that (to simplify the notation) we have given new
indices to these
squares and positions: $Q_1:=Q'_{1,2}$, $Q_2:=Q'_{2,2}$,
$Q_3:=Q'_{2,1}$, $Q_4:=Q'_{1,1}$ and similarly for higher order squares
and their positions (see Figure \ref{fig:only_squares}).

%s3.3 ###
\subsection{The branching process}\label{116}

On the probability space $\Omega,$ we define a multitype branching
process $\mathcal{Z}=(\mathcal{Z}_n)_{n=0}^\infty$.
For a Borel set $A,$ the natural number $\mathcal{Z}_n(A)$ represents
the number
of objects in generation $n$ whose type falls into the set $A$. The
type space $T$ is a subset of $[-1,1]$, but for the moment we can think
of $T=[-1,1]$. The objects of the $n$th generation are squares $Q\in
\mathcal{S}_n$ and, given a fixed $x\in[-1,1]$,
their type is $\Phi(Q,x)$, as defined in (\ref{eq:Phi}). Note that
although we speak of $\Theta$ as a type, it is \textit{not} an element
of $T$.

The process $(\mathcal{Z}_n)$ is a Markov chain whose states are
collections of squares labeled by their types.
The transition mechanism is as described in Section \ref{38}. The
initial condition of the chain is the square
$[0,1]\times[0,1]$, with type $x$ (also called the \textit{ancestor} of
the branching process). As usual, we then write, for $n\ge1,$
\begin{eqnarray*}
&&{\pr}_x\bigl(\mathcal{Z}_{n} (A_1)=r_1,\dots, \mathcal{Z}_{n}(A_k)=r_k \bigr)\\
&&\qquad={\pr} \bigl(\mathcal{Z}_{n} (A_1)=r_1,\dots, \mathcal{Z}_{n}(A_k)=r_k| \mathcal{Z}_{0}(\{x\})=1\bigr)
\end{eqnarray*}
for all $k\ge1$, $A_1,\dots,A_k \subset T$ and nonnegative integers
$r_1,\dots,r_k$.

A collection of squares all with type $\Theta$ is an absorbing state:
it only generates squares with type $\Theta$.
This is obvious from the definition of $\Phi(Q,x)$, but we will extend
this property to the case of smaller type spaces $T$,
where, by definition, a square has type $\Theta$ if its type is not in
$T$ (this will be further explained in Section~\ref{sec:6.1}).

A major role in our analysis is played by the expectations $\ev
_x[\mathcal{Z}_n(A)]$ for $A\subset T$, $n\ge1$.
Let us define, for $i=1,2,3,4,$
\begin{equation}\label{eq:Z1}
\ds\mathcal{Z}_1^i(A) = \cases{
1, & \quad if $\Phi(Q_i,x)\in A$,\cr
0, & \quad otherwise.
}
\end{equation}
Then, $\mathcal{Z}_1(A)=\mathcal{Z}_1^1(A)+\cdots+\mathcal
{Z}_1^4(A)$ and so
\begin{eqnarray*}
\ev_x[\mathcal{Z}_1(A)]&=&\int_\Omega\mathcal{Z}_1(A)\,\di\pr_x=\int_\Omega\sum_{i=1}^4\mathcal{Z}_1^i(A)\,\di\pr_x\\
&=&\sum_{i=1}^4\pr_x\bigl(\Phi(Q_i,x)\in A\bigr)=\sum_{i=1}^4\int_A f_{x,i}(y)\,\di y,
\end{eqnarray*}
where the $f_{x,i}$ are the densities of the random variables $\Phi
(Q_i,x)$ (apart from an atom in
$\Theta$). In Section \ref{sec:dens}, these densities will be
determined explicitly. It follows that for $n=1,$
\[
M_n(x,A):=\ev_x[\mathcal{Z}_n(A)]\label{115}
\]
has a density $m_1(x,y)$, called the \textit{kernel} of the branching
process, given by
\begin{equation}\label{m-kernel}
m(x,y):=m_1(x,y)=\sum_{i=1}^4 f_{x,i}(y).
\end{equation}
We remark that if $M_1$ has a density, then $M_n$
also has a density. Let us write $m_n(x ,\cdot)$
for the density of $M_n(x ,\cdot)$. The branching structure of
$\mathcal{Z}$ yields (see \cite{Harris63}, page 67)
\begin{equation}\label{eq:rec-mn}
m_{n+1}(x,y)=\int_{T}m_n(x,z)m_1(z,y)\,\di z.
\end{equation}
The main problem to be solved is that the natural choice of $T=[-1,1]$
as type space does not work because of condition (C) below and
because we need the uniformity alluded to in equation (\ref{16}).

Since the definition of $T$ is complicated, we postpone it to Section
\ref{sec:up_kernel_simple}. However, here we collect the most
important properties of $T$:
\begin{longlist}
\item[(A)] $T$ is the disjoint union of finitely many closed intervals;
\item[(B)] there exists a $K>0$ such that $[-K,K]\subset T$;
\item[(C)] the kernel $m_n(x,y)$ defined in (\ref{eq:rec-mn}) is
uniformly positive on $T\times T$ [see condition (\ref{cond:dens})
below] and it has Perron--Frobenius eigenvalue greater than $1$ [see
condition (\ref{cond:rho}) below].
\end{longlist}

%s3.4 ###
\subsection{The asymptotic behavior of the branching process $\mathcal{Z}$}\label{117}

We will prove in Sections \ref{sec:up_kernel_simple}, \ref{112} and
\ref{sec:up_kernel} that there exists an integer
$n_0$ such that $m_{n_0}$ is a uniformly bounded function, that is,
there exist $0<a_{\min}<a_{\max} $ such that for all $x ,y\in T,$ we have
{\def\theequation{C1}
\begin{equation}\label{cond:dens}
0<a_{\min}\leq m_{n_0}(x ,y)\leq a_{\max}<\infty.
\end{equation}
}\vspace*{-\baselineskip}

\noindent In the next step, we consider the following two operators:
\setcounter{equation}{10}
\begin{equation}\label{10}
g(x)\mapsto\int_{\mathbb{R}}m_1(x ,y)\cdot g(y)\,\di y,\qquad h(y)\mapsto\int_{\mathbb{R}}h(x)\cdot m_1(x ,y)\,\di x.
\end{equation}

We cite the following theorem from \cite{Harris63}, Theorem 10.1.
\begin{theorem}[(Harris)]\label{Harris:10.1}
It follows from (\ref{cond:dens}) that the operators in (\ref{10}) have a
common dominant eigenvalue $\rho$. Let $\mu(x)$ and $\nu(y)$ be the
corresponding eigenfunctions of the first and second operator in
(\ref{10}), respectively. Then, the functions $\mu(x)$ and $\nu(y)$
are bounded and uniformly positive. Moreover, apart from a scaling,
$\mu$ and $\nu$ are
the only nonnegative eigenfunctions of these operators. Further, if we
normalize
$\mu$ and $\nu$ so that $\int\mu(x)\nu(x)\,\di x=1$, which will be
henceforth assumed, then, for all $x,y \in T,$ as $n\to\infty,$
\[
\bigg|\frac{m_n(x,y)}{\rho^n}-\mu(x)\nu(y)\bigg|\le C_1\mu(x)\nu
(y)\Delta^n,
\]
where the bound $\Delta<1$ can be taken independently of $x$
and $y$,
and the constant $C_1$ is independent of $x,y$ and $n$.
\end{theorem}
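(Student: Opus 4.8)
The plan is to treat this as an instance of the Perron--Frobenius--Jentzsch theory for positive compact integral operators, applied not to $m_1$ (which need not be bounded) but to the iterate $m_{n_0}$, and then to transport the conclusions back to $m_1$. First fix a function space: since $T$ is a finite union of closed intervals it is compact, and by (\ref{cond:dens}) the kernel $m_{n_0}$ lies in $L^\infty(T\times T)$, so $\mathcal{M}\colon g\mapsto\int_T m_{n_0}(x,y)g(y)\di y$ is Hilbert--Schmidt, hence compact, on $L^2(T)$; and because $m_{n_0}\ge a>0$, $\mathcal{M}$ carries every nonzero nonnegative function to one bounded below by a positive constant, i.e.\ it is strongly positive. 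Jentzsch's theorem then gives that the spectral radius $\lambda$ of $\mathcal{M}$ is a positive eigenvalue, is simple, strictly dominates the rest of the spectrum in modulus, and has a positive eigenfunction $\mu$; the transposed kernel $m_{n_0}(y,x)$ gives $\nu$ the same way. From $\mu=\lambda^{-1}\int_T m_{n_0}(\cdot,y)\mu(y)\di y$ and $a\le m_{n_0}\le b$ one reads off that $\mu$ is bounded (by $b\lambda^{-1}\|\mu\|_{L^1(T)}$) and uniformly positive (by $a\lambda^{-1}\int_T\mu>0$), and likewise for $\nu$. Set $\rho:=\lambda^{1/n_0}$.

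The next step transports this to $m_1$. By the recursion (\ref{eq:rec-mn}) the operator attached to $m_n$ is the $n$-th power of $\mathcal{M}_1\colon g\mapsto\int_T m_1(\cdot,z)g(z)\di z$, so $\mathcal{M}=\mathcal{M}_1^{\,n_0}$; moreover $\mathcal{M}_1$ is a bounded positive operator on $L^\infty(T)$ since each section $m_1(x,\cdot)$ is a sum of four sub-probability densities, hence of total mass at most $4$. As $\mathcal{M}_1$ commutes with $\mathcal{M}=\mathcal{M}_1^{\,n_0}$, it leaves invariant the one-dimensional dominant eigenspace of $\mathcal{M}$, so $\mathcal{M}_1\mu=c\mu$; positivity of $\mathcal{M}_1$ and of $\mu$ forces $c>0$, and $c^{n_0}=\lambda$ gives $c=\rho$. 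The transposed side gives $(\mathcal{M}_1)^{*}\nu=\rho\nu$. Since an operator and its transpose have the same spectrum, $\rho$ is the common dominant eigenvalue of the two operators in (\ref{10}), and the claim that $\mu,\nu$ are, up to scaling, the only nonnegative eigenfunctions is the standard consequence of simplicity of $\rho$ together with the fact that any eigenfunction for a subdominant eigenvalue changes sign.

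For the asymptotics, use the spectral gap of the compact operator $\mathcal{M}$: after normalising $\int_T\mu\nu=1$, the Riesz projection onto the $\lambda$-eigenspace is the rank-one operator $P$ with kernel $\mu(x)\nu(y)$, and $\mathcal{M}=\lambda P+\mathcal{N}$ with $P\mathcal{N}=\mathcal{N}P=0$ and $\mathcal{N}$ of spectral radius strictly less than $\lambda$. Since $\mathcal{N}$ has a bounded kernel, $\mathcal{N}\colon L^1(T)\to L^2(T)$ and $\mathcal{N}\colon L^2(T)\to L^\infty(T)$ are bounded, so for $k\ge 2$ the kernel of $\mathcal{N}^k=\mathcal{N}\circ\mathcal{N}^{\,k-2}\circ\mathcal{N}$ is pointwise a.e.\ $O(\theta^{k})$ for some $\theta<\lambda$; hence $m_{kn_0}(x,y)=\lambda^k\mu(x)\nu(y)+\mathcal{N}^k(x,y)=\lambda^k\mu(x)\nu(y)\,[1+O((\theta/\lambda)^{k})]$, uniformly in $x,y$ because $\mu(x)\nu(y)$ is bounded below. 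For general $n=kn_0+r$ with $0\le r<n_0$, the $m_n$-operator is $\mathcal{M}^k\mathcal{M}_1^{\,r}$, whose kernel at $(x,y)$ equals $\int_T(\lambda^k\mu(x)\nu(z)+\mathcal{N}^k(x,z))\,m_r(z,y)\di z$: the main term is $\lambda^k\mu(x)\,((\mathcal{M}_1^{*})^{r}\nu)(y)=\lambda^k\rho^r\mu(x)\nu(y)=\rho^n\mu(x)\nu(y)$, and the remainder is $O(\theta^{k})$ relative to $\rho^n$, i.e.\ $O(\Delta^n)$ with $\Delta=(\theta/\lambda)^{1/n_0}<1$, uniformly in $x,y$. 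The main obstacle --- and the reason the hypothesis is phrased for $m_{n_0}$ rather than $m_1$ --- is that the level-one densities $f_{x,i}$ may be unbounded, so Jentzsch's theorem cannot be applied to $\mathcal{M}_1$ directly; all of the spectral analysis has to be carried out for the iterate $\mathcal{M}$ and then pulled back, which is exactly what the commutation-and-simplicity argument accomplishes.
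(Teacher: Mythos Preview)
The paper does not prove this theorem at all: it is quoted verbatim as \cite[Theorem 10.1]{Harris63} and used as a black box. So there is no ``paper's own proof'' to compare against; your write-up is a self-contained sketch of the Jentzsch--Perron--Frobenius argument that Harris carries out in his book, and as such it is essentially correct.

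Two remarks on the details. First, your closing sentence misidentifies the obstruction to applying Jentzsch directly to $\mathcal{M}_1$: in this paper the level-one densities $f_{x,i}$ are \emph{bounded} (they are scaled triangular densities, see (\ref{eq:f_density_function})); the actual obstacle is that $m_1$ is supported on the three thin stripes $S_1\cup S_2\cup S_3$ and hence is \emph{not uniformly positive} on $T\times T$, so strong positivity fails for $\mathcal{M}_1$ but holds for $\mathcal{M}=\mathcal{M}_1^{\,n_0}$ by (\ref{cond:dens}). Second, in the remainder estimate for $n=kn_0+r$ you implicitly use that $\int_T m_r(z,y)\di z$ is bounded in $y$; this is fine here precisely because $m_1$ (hence each $m_r$) is a bounded kernel, but it is worth saying explicitly since it is the step that would break down if $m_1$ were genuinely unbounded. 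With those adjustments your argument --- apply Jentzsch to the compact, strongly positive iterate $\mathcal{M}_1^{\,n_0}$; pull the eigenfunctions back to $\mathcal{M}_1$ by commutation and positivity; and extract the pointwise geometric decay of the complementary part from the $L^1\!\to\! L^2\!\to\! L^2\!\to\! L^\infty$ factorisation --- is exactly the standard route to Harris's Theorem~10.1.
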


Later in this paper, we will prove that in our case, this Perron--Frobenius
eigenvalue is greater than one:
{\def\theequation{C2}
\begin{equation}\label{cond:rho}
\rho>1.
\end{equation}
}\vspace*{-\baselineskip}

\noindent Using Theorem \ref{Harris:10.1}, Harris proves that $\mathcal{Z}_n
(A)$ in fact
grows exponentially with rate~$\rho$. Introducing
\[
W_n(A):=\frac{\mathcal{Z}_n (A)}{\rho^n},
\]
he obtains
(see \cite{Harris63}, Theorem 14.1)
the following result.
\begin{theorem}[(Harris)]\label{thmA:14.1}
If
{\def\theequation{C3}
\begin{equation}\label{cond:secmom}
\sup_{x\in T }\ev_x [\mathcal{Z}_1 (T)^2]<\infty,
\end{equation}
}\vspace*{-\baselineskip}

\noindent then it follows from (\ref{cond:dens}) and (\ref{cond:rho})
that for all $x \in T,$
\setcounter{equation}{11}
\begin{equation}\label{13}
{\pr}_x \Bigl(\lim\limits_{n\to\infty}W_n(A)=:W(A)\Bigr)=1.
\end{equation}
Further, for every Borel measurable $A\subset T$ with
$\mathcal{L}{\mathbf{eb}}_1(A)>0,$ we have
\begin{equation}\label{eq:star}
{\pr}_x \bigl(W(A)>0\bigr)>0.
\end{equation}
Moreover, let $A$ and $B$ be subsets of ${T}$ such that their
Lebesgue measures are positive. Then, the relation
\[
W(B)=\frac{\int_B \nu(y)\,\di y}{\int_A \nu(y)\,\di y}W(A)
\]
holds $\pr_x$ almost surely for any $x\in T$.
\end{theorem}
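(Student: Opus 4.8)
The plan is to run the general–type–space version of the Kesten--Stigum $L^2$ argument, taking as input the Perron--Frobenius picture supplied by Theorem~10.1 of \cite{Harris63}: $\rho>1$, eigenfunctions $\mu,\nu$ that are bounded and uniformly positive on $T$, and $m_k(x,y)=\rho^k\mu(x)\nu(y)\bigl(1+O(\Delta^k)\bigr)$ with $0<\Delta<1$ and the $O$-term uniform in $x,y\in T$. First I would introduce the fundamental martingale
\[
V_n:=\rho^{-n}\int_T \mu\,\di\mathcal{Z}_n=\rho^{-n}\sum_{Q}\mu\bigl(\Phi(Q,x)\bigr),
\]
the sum being over the level-$n$ squares whose type lies in $T$. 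Conditioning on $\mathcal{F}_n:=\sigma(\mathcal{Z}_0,\dots,\mathcal{Z}_n)$ and using that a square of type $y$ reproduces independently with $\ev\bigl[\int_T\mu\,\di\mathcal{Z}_1\mid y\bigr]=\int_T m_1(y,z)\mu(z)\,\di z=\rho\,\mu(y)$ (the eigenfunction identity), one gets $\ev_x[V_{n+1}\mid\mathcal{F}_n]=V_n$. Since $\mu>0$, $(V_n,\mathcal{F}_n)$ is a nonnegative martingale, so $V_n\to V$ $\pr_x$-a.s.\ with $\ev_x V\le V_0=\mu(x)$.

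Next I would establish the $L^2$ bound, the only place where (\ref{cond:secmom}) enters. From the branching decomposition,
\[
\mathrm{Var}_x(V_{n+1})-\mathrm{Var}_x(V_n)=\rho^{-2(n+1)}\,\ev_x\!\Bigl[\sum_{Q\in\mathcal{Z}_n}\mathrm{Var}\bigl(\textstyle\int_T\mu\,\di\mathcal{Z}_1\,\big|\,\Phi(Q,x)\bigr)\Bigr];
\]
the inner conditional variance is at most $\|\mu\|_\infty^2\,\sup_{x\in T}\ev_x[\mathcal{Z}_1(T)^2]<\infty$ by (\ref{cond:secmom}), while $\ev_x[\mathcal{Z}_n(T)]=M_n(x,T)=O(\rho^n)$ uniformly by Theorem~10.1; hence the increment is $O(\rho^{-n-2})$, which is summable since $\rho>1$. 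Therefore $\sup_n\ev_x[V_n^2]<\infty$, the martingale is uniformly integrable, $V_n\to V$ in $L^2(\pr_x)$, and $\ev_x V=\mu(x)>0$; in particular $\pr_x(V>0)>0$.

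Then I would pass to $W_n(A)=\rho^{-n}\mathcal{Z}_n(A)$ for a Borel $A\subset T$ (if $\mathcal{L}{\mathbf{eb}}_1(A)=0$ then $\mathcal{Z}_n(A)=0$ $\pr_x$-a.s.\ and there is nothing to prove, so assume $\mathcal{L}{\mathbf{eb}}_1(A)>0$). This is no longer a martingale, but the same second-moment bookkeeping — writing $\mathrm{Var}_x(W_n(A))$ as a sum over $k=0,\dots,n-1$ of the variance injected at generation $k$, and using that the generation-$n$ progeny of a square $Q$ from generation $k$ has expected $A$-mass $\rho^{-(n-k)}M_{n-k}(\Phi(Q,x),A)=\mu(\Phi(Q,x))\,\nu(A)+O(\Delta^{n-k})$ with uniformly bounded second moment — gives $L^2(\pr_x)$-boundedness of $(W_n(A))_n$, and together with a Borel--Cantelli bound on $\ev_x\bigl[(W_{n+1}(A)-W_n(A))^2\bigr]$ forces a.s.\ and $L^2$ convergence to a limit $W(A)$. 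To identify $W(A)$ I would condition on $\mathcal{F}_m$ and let $n\to\infty$: since $\rho^{-(n-m)}M_{n-m}(y,A)\to\mu(y)\,\nu(A)$ uniformly in $y\in T$ (Theorem~10.1 again), $\ev_x[W(A)\mid\mathcal{F}_m]=\nu(A)\,V_m$, and letting $m\to\infty$ (using $L^2$-convergence of both sides) yields
\[
W(A)=\Bigl(\int_A\nu(y)\,\di y\Bigr)\,V\qquad \pr_x\text{-a.s.}
\]
Because $\nu$ is uniformly positive on $T$ and $\mathcal{L}{\mathbf{eb}}_1(A)>0$ we have $\int_A\nu>0$, so $\pr_x(W(A)>0)=\pr_x(V>0)>0$; and for a second set $B$ of positive measure, dividing $W(B)=(\int_B\nu)V$ by $W(A)=(\int_A\nu)V$ gives $W(B)=\tfrac{\int_B\nu}{\int_A\nu}W(A)$ (on $\{V=0\}$ both sides vanish a.s.), which is the remaining assertion.

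I expect the crux to be the $L^2$ control in the previous step: one must show both that the per-generation variance contributions to $W_n(A)$ are summable (for $L^2$-boundedness) and that their tails decay geometrically, in order to upgrade this to a.s.\ convergence of the \emph{non}-martingale sequence $W_n(A)$. Both rest on a genuinely uniform spectral gap — the $O(\Delta^k)$ error in $m_k(x,y)=\rho^k\mu(x)\nu(y)(1+O(\Delta^k))$ must be uniform in $x,y\in T$ — so that the part of $\mathbf{1}_A-\nu(A)\mu$ orthogonal to $\nu$ is damped geometrically by the kernel; the uncountability of $T$ is harmless precisely because everything is routed through the uniformly positive bounded functions $\mu,\nu$ and this uniform rate.
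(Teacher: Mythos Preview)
The paper does not prove this statement at all: it is quoted verbatim as Theorem~14.1 from Harris's book \cite{Harris63}, with the surrounding text ``he obtains (see \cite[Theorem 14.1]{Harris63})'' and no argument given. So there is nothing in the paper to compare your proof against.

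That said, your $L^2$ martingale approach is essentially the classical one and is correct. Introducing $V_n=\rho^{-n}\int_T\mu\,\di\mathcal{Z}_n$, using the eigenfunction identity to see it is a nonnegative martingale, and invoking (\textbf{C3}) together with the uniform spectral estimate $m_k(x,y)=\rho^k\mu(x)\nu(y)(1+O(\Delta^k))$ to get $\sup_n\ev_x[V_n^2]<\infty$ is exactly how Harris proceeds. Your identification step $\ev_x[W(A)\mid\mathcal{F}_m]=\nu(A)V_m$ via uniform integrability, followed by $m\to\infty$, is also standard and yields $W(A)=\bigl(\int_A\nu\bigr)V$ cleanly, from which both (\ref{eq:star}) and the ratio formula follow immediately since $\nu$ is uniformly positive. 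The only place to be a little careful is the a.s.\ convergence of the non-martingale $W_n(A)$: rather than bounding $\ev_x[(W_{n+1}(A)-W_n(A))^2]$ directly, it is slightly cleaner to set $R_n:=W_n(A)-\nu(A)V_n=\rho^{-n}\int_T(\mathbf{1}_A-\nu(A)\mu)\,\di\mathcal{Z}_n$ and show $\ev_x[R_n^2]$ decays geometrically (this is precisely the ``damping of the part orthogonal to $\nu$'' you flag in your last paragraph), whence $R_n\to 0$ a.s.\ by Borel--Cantelli and $W_n(A)\to\nu(A)V$ follows from the already-established convergence of $V_n$. Your sketch contains all the right ingredients for this.
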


We are going to use this theorem to prove our \hyperref[15]{Main Lemma}, which
summarizes everything we need concerning our branching process. Roughly
speaking, the \hyperref[15]{Main Lemma} says that for the branching process
associated to Larsson's Cantor set, the statement in
Theorem~\ref{thmA:14.1} holds uniformly both in $n$ and $x$ for an
appropriately chosen small interval of $x$'s.

\begin{main*}\label{15}
There exist positive numbers $\delta$ and $q$, an $N\in\mathbb{N}$
and a small interval $[-K,K]\subset
T$ centered at the origin such that the following
inequality holds:
\begin{equation}\label{eq:mainZ}
\inf_{n\geq N}\inf_{x \in[-K,K]}\pr_x\bigl(\mathcal{Z}_n([-K,0])>\delta\rho^n,\mathcal{Z}_n([0,K])>\delta\rho^n\bigr) \geq q.
\end{equation}
\end{main*}

%s4 ###
\section{\texorpdfstring{The proof of Theorem \protect\ref{17}}{The proof of Theorem 1}}
\label{47}
In Section \ref{38}, we defined the type of a square $Q$ by means of
its intersection with a line $e(x )$.
Here, we will elaborate on this intersection.

%s4.1 ###
\subsection{Nice intersection of a square with a line $e(x)$}

We say that a square $Q$ has a \textit{nice intersection} with
$e(x)$ if
\[
\Phi(Q,x )\in[-K,K],
\]
where $K$ comes from \hyperref[15]{Main Lemma}. For small $K,$ this means that the center of
$Q$ is close to the line $e(x)$.

Let $\mathcal{A}^{0}=\{[0,1]^2\}$, $\mathcal{A}^{n}$ be the set $\{Q\in\mathcal{S}_{n}\dvtx Q\subset C_1^{n}\times C_2^{n}\}$ and $\mathcal
{A}_{x }^{n}$ be the
set of squares from $\mathcal{A}^{n}$
having nice intersection with $e(x)$. That is, for $x \in\T$ and
$n\geq1,$ we define
\[
\mathcal{A}_{x }^{n}:=\{Q\in\mathcal{A}^n\dvtx |\Phi(Q,x)|\leq K\}.
\]
Moreover, for $m\geq0$ and a square $Q\in\mathcal{A}^m_x$, we write
$l^+_n (Q,x)$ and ($l^-_n (Q,x)$)
for the numbers of level-$(m+n)$ squares contained in $Q$ which have
nice intersection with $e(x)$ with center below and above the line
$e(x),$ respectively. That is, for a $Q=Q_{i_1\dots i_m}$, let
\[
l^+_n (Q,x)=\#\{Q_{i_1\dots i_mj_{1}\dots j_{n}}\in\mathcal
{A}^{m+n}\dvtx 0\leq\Phi(Q_{i_1\dots i_mj_1\dots j_{n}},x)\leq K\}.
\]
Similarly, let
\[
l^-_n (Q,x)=\#\{Q_{i_1\dots i_mj_{1}\dots j_{n}} \in\mathcal
{A}^{m+n}\dvtx -K\leq\Phi(Q_{i_1\dots i_mj_1\dots j_{n}},x)\leq 0\}.
\]
Finally, for every $n\geq1$, $x\in\T$ and $Q\in\mathcal
{A}^m_x$, we define the event
\[
A_n(Q,x ):=\{ l^-_n (Q,x)>\delta\rho^n, l^+_n (Q,x)>\delta\rho
^n\},
\]
where $\delta$ comes from the \hyperref[15]{Main Lemma}. Note that the self-similarity of the construction of the squares and
the \hyperref[15]{Main Lemma} for the underlying branching process imply the
following: for $n\geq N$ and a square $Q\in\mathcal{S}_m$, we have
\begin{eqnarray}\label{a}
&&\pr(A_n(Q,x)| Q\in\mathcal{A}^m_x )\nonumber\\[-8pt]\\[-8pt]
&&\qquad=\pr_{\Phi(Q,x)}\bigl(\mathcal{Z}_n([-K,0])>\delta\rho^n,\mathcal{Z}_n ([0,K])>\delta\rho^n\bigr)\geq
q.\nonumber
\end{eqnarray}

%s4.2 ###
\subsection{The difference set $C_2-C_1$ contains an interval with
positive ${\pr}$ probability}

We introduce the interval
\[
I:=[- K a^N, K a^N]
\]
with $N$ and $K$ from the \hyperref[15]{Main Lemma}.
Note that $|I|:=\mathcal{L}\mathbf{eb}_1(I)=2Ka^N$.

Our goal is to prove that
\[
{\pr} (C_2-C_1\supset I) > 0.\label{goal}
\]

First, we divide the interval $I$ into $4^{2N}$ intervals $I_{i_1}$
of equal length with indices $\pm1,\ldots,\pm\frac{1}{2}4^{2N}$.
Then, we divide all of these intervals into
$4^{3N}$ intervals $I_{i_1i_2}$ of equal length. If we have
already defined the $(k-1)$th level intervals, then we define the
$k$th level intervals $I_{i_1\ldots i_k}$ by subdividing each
$(k-1)$th level interval $I_{i_1\ldots i_{k-1}}$ into $4^{(k+1)N}$
intervals of equal length with indices $\pm1,\ldots,\pm\frac
{1}{2}4^{(k+1)N}$. We denote the center of $I_{i_1\ldots i_k}$ by
$z_{i_1\ldots i_k}$.
That is,
\[
I_{i_1\dots i_k}=\bigl[z_{i_1\dots i_k}-K a^N4^{-[2+\cdots+(k+1)]N},z_{i_1\dots i_k}+K a^N4^{-[2+\cdots+(k+1)]N}\bigr],
\]
where the $z_{i_1\dots i_k}$ are equally spaced in $I_{i_1\ldots
i_{k-1}}$.

Note that the interval $I_{i_1\ldots i_k}$ has
length
\begin{equation}\label{200}
|I_{i_1\ldots i_k}|=2K a^N4^{-[2+\cdots+(k+1)]N}<2Ka^{g_k},
\end{equation}
where we put
\[
g_k:=(1+\cdots+(k+1))N=\tfrac12(k+1)(k+2)N.
\]
In the following, we will go from generation $g_{k-1}$ to generation
$g_{k} $.
\begin{definition}\label{45}
We say that the event ${B_k(z_{i_1\ldots i_k})}$ occurs if there exists
some square
$Q\in\mathcal{A}^{g_{k-1}}$, itself having nice intersection with
$e(z_{i_1\ldots i_k})$,
such that $A_{(k+1)N}(Q,z_{i_1\ldots i_k})$ holds---cf. Figure \ref{Delft13}.
In formulae,
\begin{equation}\label{B}
B_k(z_{i_1\ldots i_k})= \bigcup\limits_{Q\in\mathcal
{A}_{z_{i_1\ldots i_k}}^{g_{k-1}}}
A_{(k+1)N}(Q, z_{i_1\ldots i_k}).
\end{equation}
\end{definition}

The following lemma is one of the key statements of the argument.

\begin{lemma}\label{lem:B_k}
Assume that $B_k(z_{i_1\ldots i_k})$ occurs with the square $Q$. Let
$\mathcal{Q}^+$ and $\mathcal{Q}^-$
be the collections of level-$g_k$ squares within $Q$
having nice intersection with $e(z_{i_1\ldots i_k})$ with center below
and above the line $e(z_{i_1\ldots i_k}),$ respectively. Then,
\begin{longlist}[(1)]
\item[(1)]
\[
\operatorname{Proj}_{45^{\circ}}\biggl(\bigcup_{\widetilde{Q}\in\mathcal{Q}^+}
\widetilde{Q}\biggr)\supset I_{i_1\ldots i_k},\qquad \operatorname{Proj}_{45^{\circ}}\biggl(\bigcup_{\widetilde{Q}\in\mathcal{Q}^-}
\widetilde{Q}\biggr)\supset I_{i_1\ldots i_k}.
\]
\item[(2)] For every
$i_{k+1}=\pm1,\ldots,\pm\frac{1}{2}4^{(k+2)N}$, the line
$e(z_{i_1\ldots
i_k i_{k+1}})$ has nice intersection with all squares from either
$\mathcal{Q}^+$ or $\mathcal{Q}^-$. Thus, the line
$e(z_{i_1\ldots
i_k i_{k+1}})$
has nice intersection with at least
$\delta\rho
^{(k+1)N}$ squares contained in $Q$ such that either all have center
below the line $e(z_{i_1\ldots i_k})$ or all have center above the line
$e(z_{i_1\ldots i_k})$.
\end{longlist}
\end{lemma}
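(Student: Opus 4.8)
The plan is to reduce both assertions to elementary inequalities about the horizontal coordinate $v-u$ of the lower-left corner of a level $g_k$ square, using only the explicit affine formula (\ref{eq:Phi}) for $\Phi$. Write $z:=z_{i_1\ldots i_k}$ and let $h$ be the half-length of $I_{i_1\ldots i_k}$, so $I_{i_1\ldots i_k}=[z-h,z+h]$; as recorded just before Definition~\ref{45} one has $|I_{i_1\ldots i_k}|<2Ka^{g_k}$, i.e.\ $h<Ka^{g_k}$, and this is exactly where the growth hypothesis $a>\frac14$ is used. I would also assume, as we may, that the interval $[-K,K]$ from the Main Lemma is small enough that $K<\frac12$, so that $h<Ka^{g_k}\le(1-K)a^{g_k}\le a^{g_k}$. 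For a level $g_k$ square $\widetilde Q=[u,u+a^{g_k}]\times[v,v+a^{g_k}]$ put $c:=v-u$; then (\ref{eq:Phi}) gives $\Phi(\widetilde Q,z)=(z-c)/a^{g_k}$, so $\widetilde Q$ has nice intersection with $e(z)$ iff $|c-z|\le Ka^{g_k}$, and its centre lies below (resp.\ above) $e(z)$ iff $c\le z$ (resp.\ $c\ge z$). Hence $\mathcal{Q}^+$ is the set of level $g_k$ squares within $Q$ with $c\in[z-Ka^{g_k},z]$ and $\mathcal{Q}^-$ the set of those with $c\in[z,z+Ka^{g_k}]$; matching this with the definition of $l^{\pm}_n$ via the statistical self-similarity of Section~\ref{38} yields $\#\mathcal{Q}^{\pm}=l^{\pm}_{(k+1)N}(Q,z)>\delta\rho^{(k+1)N}$ since $A_{(k+1)N}(Q,z)$ holds, and in particular both families are non-empty.

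For part (1) I would use that $Proj_{45^{\circ}}(\widetilde Q)=[c-a^{g_k},c+a^{g_k}]$: if $\widetilde Q\in\mathcal{Q}^+$, then $c\le z$ gives $c-a^{g_k}\le z-a^{g_k}\le z-h$, and $c\ge z-Ka^{g_k}$ gives $c+a^{g_k}\ge z+(1-K)a^{g_k}\ge z+h$, so $I_{i_1\ldots i_k}\subset Proj_{45^{\circ}}(\widetilde Q)$; since $\mathcal{Q}^+\ne\emptyset$ this proves the first inclusion, and $\mathcal{Q}^-$ is symmetric. (Thus already a single nicely intersecting level $g_k$ square projects over all of $I_{i_1\ldots i_k}$.) For part (2) I would fix $i_{k+1}$, set $z':=z_{i_1\ldots i_ki_{k+1}}$, and note that, being the centre of $I_{i_1\ldots i_ki_{k+1}}\subset[z-h,z+h]$, it satisfies $|z'-z|\le h<Ka^{g_k}$. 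The conclusion then follows by splitting on the sign of $z'-z$: if $z'\le z$, every $\widetilde Q\in\mathcal{Q}^+$ has $c\le z<z'+Ka^{g_k}$ and $c\ge z-Ka^{g_k}\ge z'-Ka^{g_k}$, hence $|c-z'|\le Ka^{g_k}$, i.e.\ $\widetilde Q$ still has nice intersection with $e(z')$, so $e(z')$ meets nicely every square of $\mathcal{Q}^+$ --- a family of more than $\delta\rho^{(k+1)N}$ level $g_k$ squares inside $Q$, all with centre below $e(z)$; if $z'\ge z$, the mirror-image estimate using $c\in[z,z+Ka^{g_k}]$ gives the same with $\mathcal{Q}^-$. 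Since every $z_{i_1\ldots i_ki_{k+1}}$ lies in $[z-h,z+h]$, one of the two cases always applies, which proves (2).

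I do not expect a genuine obstacle: the whole argument is bookkeeping with the affine map $\Phi$ of (\ref{eq:Phi}) and the size chain $h<Ka^{g_k}<a^{g_k}$. The points that need care are (i) the use of $a>\frac14$, which is precisely what yields $h<Ka^{g_k}$ and hence lets a single nicely intersecting square cover $I_{i_1\ldots i_k}$ (and which forces the Main Lemma's $K$ to be taken small, at least $K<\frac12$); (ii) the identification $\#\mathcal{Q}^{\pm}=l^{\pm}_{(k+1)N}(Q,z)$, for which one must match the ``relative'' type occurring in the definition of $l^{\pm}_n$ with the global type $\Phi(\widetilde Q,z)$ via the self-similarity of Section~\ref{38}; and (iii) keeping in mind throughout that ``centre below/above the line'' always refers to the unperturbed line $e(z_{i_1\ldots i_k})$, so the dichotomy in part (2) is governed solely by the sign of $z'-z$.
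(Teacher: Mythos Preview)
Your argument is correct and is essentially the paper's own proof, just phrased algebraically via the coordinate $c=v-u$ instead of geometrically (``the left bottom corner lies between the lines $e(z)$ and $e(z-Ka^{g_k})$''). The paper proves (1) and (2) in one stroke: for any $y\in I_{i_1\dots i_k}$ with $y\le z$ it shows $e(y)$ has nice intersection with every square in $\mathcal{Q}^+$, which simultaneously gives the projection statement and the persistence of nice intersections for the sub-centres $z_{i_1\dots i_k i_{k+1}}$; your separate treatment of (1) via $Proj_{45^{\circ}}(\widetilde Q)=[c-a^{g_k},c+a^{g_k}]$ and of (2) via the sign of $z'-z$ amounts to the same computation. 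Your side remarks are on point: the inequality $h<Ka^{g_k}$ is exactly the paper's $\tfrac12|I_{i_1\dots i_k}|<Ka^{g_k}$ (this is where $a>\tfrac14$ enters), and the smallness of $K$ (the paper later takes $K<\tfrac18$) is what makes $h<(1-K)a^{g_k}$ automatic.
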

\begin{figure}

\includegraphics{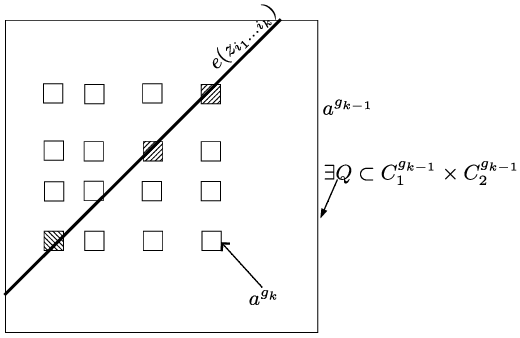}

\caption{Event $B_k(z_{i_1\dots i_k})$: there is a level-$g_{k-1}$
square $Q$ in which the number of striped level-$g_k $ squares
(the nicely intersecting ones) is at least $\delta\rho
^{N(k+1)}$, both for the squares with center above and the squares with center
below the line $e(z_{i_1\dots i_k})$.}\label{Delft13}
\end{figure}
\begin{pf} Choose an arbitrary $y\in I_{i_1\dots i_k}$. Without loss
of generality, we may assume that $y\leq z_{i_1\dots i_k}$.
Then, to show both (1) and (2), it is enough to prove that $e(y)$ has
nice intersection with
all squares from $\mathcal{Q}^+$.

Fix an arbitrary $Q\in\mathcal{Q}^+$. By the definition of $\mathcal
{Q}^+$, the square $Q$ is a level-$g_k$ square such that its lower-left
corner is in between the parallel lines $e(z_{i_1\dots i_k})$
and $e(z_{i_1\dots i_k}-Ka^{g_k})$.
So, for every point $y^*\in[z_{i_1\dots i_k}-Ka^{g_k},z_{i_1\dots
i_k}],$ the line $e(y^*)$ has nice intersection with $Q$; see Figure
\ref{fig:square_covers}.

\begin{figure}

\includegraphics{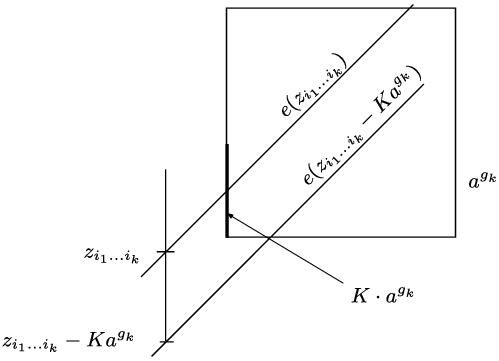}

\caption{Nice intersections.}\label{fig:square_covers}\vspace*{2pt}
\end{figure}

To show that for any $y\in I_{i_1\dots i_k}\cap(-\infty,z_{i_1\dots
i_k}],$ $e(y)$ has nice intersection with
all squares from $\mathcal{Q}^+,$ it is enough to prove that
\[
I_{i_1\dots i_k}\cap(-\infty,z_{i_1\dots i_k}]\subset[z_{i_1\dots
i_k}-Ka^{g_k},z_{i_1\dots i_k}],
\]
based on the previous paragraph. However, since
\[
|I_{i_1\dots i_k}\cap(-\infty,z_{i_1\dots i_k}]|=\tfrac{1}{2}|I_{i_1\ldots i_k}|< K a^{g_k},
\]
this follows using (\ref{200}).
\end{pf}

\begin{definition}
Let $ E_0:=A_N([0,1]^2,0)$ and let $ E_k:=\bigcap_{i_1\ldots
i_k}B_k(z_{i_1\ldots i_k}).$
\end{definition}

\begin{lemma}\label{lem:subset}
The following inequality holds:
\begin{equation}\label{almost}
\pr(C_2-C_1\supset I)\geq q\prod\limits_{k\geq 1}\pr(E_k|E_{k-1}).
\end{equation}
\end{lemma}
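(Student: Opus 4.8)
The plan is to establish the inequality \eqref{almost} by a nested-covering argument, showing that the event $E_0 \cap \bigcap_{k\geq 1} E_k$ forces $C_2 - C_1 \supset I$, and then estimating its probability. First I would observe that on the event $E_0 = A_N([0,1]^2,0)$ there is at least one level $N$ square $Q\subset C_1^N\times C_2^N$ with nice intersection with $e(0)$, so by Lemma~\ref{lem:B_k}(1) (applied with $k=0$, i.e. to the ancestor square and the interval $I = I_{\Lambda}$) we get $Proj_{45^\circ}(\bigcup \widetilde Q) \supset I$ for a collection of level $g_0 = N$ squares inside $[0,1]^2$; this already shows $I \subset Proj_{45^\circ}(C_1^N\times C_2^N)$.

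The heart of the argument is an inductive descent through the generations $g_{k-1}\to g_k$. Suppose we have produced a nested sequence of squares $Q^{(0)}\supset Q^{(1)}\supset\cdots\supset Q^{(k-1)}$ with $Q^{(j)}\in\mathcal{S}_{g_j}$, $Q^{(j)}\subset C_1^{g_j}\times C_2^{g_j}$, together with a nested sequence of intervals $I \supset I_{i_1}\supset I_{i_1 i_2}\supset\cdots\supset I_{i_1\ldots i_k}$ such that $Proj_{45^\circ}(Q^{(k-1)})\supset I_{i_1\ldots i_{k-1}}$ and $Q^{(k-1)}$ has nice intersection with $e(z_{i_1\ldots i_{k-1}})$ — hence, since $I_{i_1\ldots i_k}\subset I_{i_1\ldots i_{k-1}}$ has length $< 2Ka^{g_{k-1}}$, with $e(z_{i_1\ldots i_k})$ as well. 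Now condition on $E_{k-1}$ and use the occurrence of $B_k(z_{i_1\ldots i_k})$ (which is part of $E_k$): there is a level $g_{k-1}$ square $Q$ with nice intersection with $e(z_{i_1\ldots i_k})$ for which $A_{(k+1)N}(Q,z_{i_1\ldots i_k})$ holds. I would need to check that this $Q$ can be taken to be $Q^{(k-1)}$ itself: since $B_k$ only asserts existence of \emph{some} such $Q$ in $C_1^{g_{k-1}}\times C_2^{g_{k-1}}$, one either argues that a single level $g_{k-1}$ square with nice intersection with $e(z_{i_1\ldots i_k})$ sitting inside the previously chosen $Q^{(k-2)}$-descendant must be used (this is where the very fast shrinking rate $4^{-[2+\cdots+(k+1)]N}$ of the intervals matters — it guarantees there is at most one relevant square at the right scale, so the $Q$ furnished by $B_k$ is forced), or one reorganizes the definition of $E_k$ so that the squares chain up. Granting this, Lemma~\ref{lem:B_k}(1)--(2) gives a level $g_k$ square $Q^{(k)}\subset Q^{(k-1)}$ with $Proj_{45^\circ}(Q^{(k)})\supset I_{i_1\ldots i_k}$ and nice intersection with $e(z_{i_1\ldots i_{k+1}})$ for the next level, closing the induction.

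With the descent in hand, for a fixed $x\in I$ choose at each level $k$ the index $i_k$ so that $x\in I_{i_1\ldots i_k}$; the nested squares $Q^{(k)}$ have diameters $\sqrt2\, a^{g_k}\to 0$, so their intersection is a single point $(x_1,x_2)\in C_1\times C_2$ with $x_2 - x_1 = x$ (using $Proj_{45^\circ}(Q^{(k)})\supset I_{i_1\ldots i_k}\ni x$ for every $k$, so $x = \lim Proj_{45^\circ}(Q^{(k)}) = x_2 - x_1$). Hence $x\in C_2 - C_1$, and since $x\in I$ was arbitrary, $C_2 - C_1\supset I$ on the event $E_0\cap\bigcap_{k\geq1}E_k$. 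Therefore
\begin{equation*}
\pr(C_2 - C_1\supset I)\geq \pr\Big(E_0\cap\bigcap_{k\geq1}E_k\Big)
= \pr(E_0)\prod_{k\geq1}\pr(E_k\mid E_{k-1})\geq q\prod_{k\geq1}\pr(E_k\mid E_{k-1}),
\end{equation*}
where the last inequality uses $\pr(E_0) = \pr(A_N([0,1]^2,0))\geq q$ from the Main Lemma (the ancestor square $[0,1]^2$ trivially has nice intersection with $e(0)$, as $\Phi([0,1]^2,0)=0$).

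The main obstacle I anticipate is the bookkeeping that makes the square $Q$ produced by $B_k(z_{i_1\ldots i_k})$ compatible with — ideally, contained in — the square $Q^{(k-1)}$ carried over from the previous step, so that the $Q^{(k)}$ genuinely nest and their common intersection is a point of $C_1\times C_2$. This is exactly the role of the rapidly decreasing interval lengths: because $I_{i_1\ldots i_k}$ is so much shorter than $a^{g_{k-1}}$, the geometry in Figure~\ref{fig:square_covers} pins down which level $g_{k-1}$ square can have nice intersection with $e(z_{i_1\ldots i_k})$ inside the relevant region, forcing uniqueness and hence the nesting. The remaining ingredients — that $\pr(E_0)\geq q$, that conditioning factorizes the probability of the nested events as a telescoping product, and that shrinking squares intersect in a point whose $45^\circ$ projection is the prescribed $x$ — are routine.
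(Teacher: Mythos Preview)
Your argument has two genuine gaps, and both are sidestepped by a much simpler route in the paper.

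\medskip
\textbf{1. The nested-square construction does not go through.} The event $B_k(z_{i_1\ldots i_k})$ only asserts the existence of \emph{some} level $g_{k-1}$ square $Q\in\mathcal{A}_{z_{i_1\ldots i_k}}^{g_{k-1}}$ for which $A_{(k+1)N}(Q,z_{i_1\ldots i_k})$ holds; nothing forces this $Q$ to lie inside your previously chosen $Q^{(k-2)}$. Your proposed fix---that the rapid shrinking of $I_{i_1\ldots i_k}$ forces uniqueness of the relevant level $g_{k-1}$ square---is false: the entire point of the Main Lemma (and of the proof of Lemma~\ref{lem:E_k}) is that under $E_{k-1}$ there are at least $\delta\rho^{kN}$ level $g_{k-1}$ squares with nice intersection with $e(z_{i_1\ldots i_k})$. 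So no uniqueness, and no nesting, can be extracted from the definition of $E_k$.

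\medskip
\textbf{2. The probability factorization is wrong.} The identity
\[
\pr\Big(E_0\cap\bigcap_{k\geq1}E_k\Big)=\pr(E_0)\prod_{k\geq1}\pr(E_k\mid E_{k-1})
\]
fails in general: the chain rule requires conditioning on $E_0\cap\cdots\cap E_{k-1}$, not just on $E_{k-1}$, and the events $E_k$ have no Markov structure. Worse, even the inequality $\pr(\bigcap_k E_k)\geq \pr(E_0)\prod_k\pr(E_k\mid E_{k-1})$ can fail (easy three-event counterexamples exist), so your route cannot be repaired by weakening the equality to an inequality.

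\medskip
\textbf{How the paper avoids both problems.} The paper never intersects the $E_k$. Instead it observes that each $E_k$ \emph{alone} implies the event
\[
S_k:=\big\{Proj_{45^\circ}(C_1^{g_k}\times C_2^{g_k})\supset I\big\},
\]
simply because $I=\bigcup_{i_1\ldots i_k}I_{i_1\ldots i_k}$ and Lemma~\ref{lem:B_k}(1) covers each piece. The $S_k$ are decreasing with $\bigcap_k S_k=\{C_2-C_1\supset I\}$, so
\[
\pr(C_2-C_1\supset I)=\lim_{k}\pr(S_k)\geq \inf_k\pr(E_k).
\]
Finally, the bound $\inf_k\pr(E_k)\geq \pr(E_0)\prod_{k\geq1}\pr(E_k\mid E_{k-1})$ is a pure algebraic fact: writing out the finite product $\pr(E_0)\prod_{i=1}^{k}\pr(E_i\mid E_{i-1})$ as $\pr(E_kE_{k-1})\cdot\prod_{i=0}^{k-1}\frac{\pr(E_iE_{i-1})}{\pr(E_i)}\leq\pr(E_k)$, since each ratio is at most $1$. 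No nesting of squares and no Markov property are needed.
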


\begin{pf}
Using the fact that $ I=[-Ka^N,Ka^N]=\bigcup_{i_1\ldots i_k} I_{i_1\ldots i_k}$,
it follows immediately from Lemma \ref{lem:B_k} that if the event
$E_k$ holds, then the event
\[
S_k:=\{\operatorname{Proj}_{45^{\circ}}(C_1^{g_k}\times C_2^{g_k}\label{ek})\supset I\}
\]
will hold.
Therefore, $E_k\subset S_k$. Since the sets $C_1^{g_k}\times C_2^{g_k}$
are decreasing, we obtain that $S_k\supset S_{k+1}$. Thus,
\begin{eqnarray*}\label{eq:CCI}
\pr(C_2-C_1\supset I)&=&\displaystyle\pr\biggl(\bigcap_{k\geq1} S_k\biggr)=\lim\limits_{k\to\infty}\pr(S_k)\geq\inf_{k\geq1} \pr(E_k)\\
&\geq&\displaystyle\pr(E_0)\prod\limits_{k\geq1}\pr(E_k|E_{k-1}).
\end{eqnarray*}
The last inequality holds since
\begin{eqnarray*}
\pr(E_0)\prod\limits_{i\geq1}\pr(E_i|E_{i-1})&\leq& \pr(E_0)\pr(E_1|E_0)\cdots\pr(E_k|E_{k-1})\\
&=&p\pr(E_kE_{k-1})\leq\pr(E_k),
\end{eqnarray*}
where
\[
p=\frac{\pr(E_0)}{\pr(E_0)}%
\frac{\pr(E_1E_0)}{\pr(E_1)}\cdots%
\frac{\pr(E_{k-1}E_{k-2})}{\pr(E_{k-1})}\leq1.
\]
Since the \hyperref[15]{Main Lemma} yields $ \pr(E_0)\geq q,$ one obtains the
statement of the lemma.
%\rightqed
\end{pf}

In Lemma \ref{lem:E_k}, we give a lower bound for
$\pr(E_k|E_{k-1})$ for every $k$.

\begin{lemma}\label{lem:E_k}
For any $k\geq1,$ we have
\[
\pr (E_k|E_{k-1})\geq
1-4^{2N+\cdots+(k+1)N}(1-q)^{\delta\rho^{kN}}.
\]
\end{lemma}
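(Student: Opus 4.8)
The plan is to estimate the conditional probability $\pr(E_k\mid E_{k-1})$ by working inside a single generation-$g_{k-1}$ square and using the independence built into the tree construction together with the uniform lower bound $q$ supplied by the Main Lemma. Recall $E_k=\bigcap_{i_1\ldots i_k}B_k(z_{i_1\ldots i_k})$, so by a union bound over complements it suffices to bound, for each multi-index $i_1\ldots i_k$,
\begin{equation*}
\pr\bigl(B_k(z_{i_1\ldots i_k})^c\mid E_{k-1}\bigr),
\end{equation*}
and then multiply the number of such indices, which is exactly $4^{2N+\cdots+(k+1)N}$ (the product of the branching factors used in subdividing down to level-$k$ intervals), by the uniform bound on a single such complement.

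\textbf{Key steps.}
First I would fix a multi-index $i_1\ldots i_k$ and condition on $E_{k-1}$. On the event $E_{k-1}$, for the parent index $i_1\ldots i_{k-1}$ the event $B_{k-1}(z_{i_1\ldots i_{k-1}})$ holds, and by Lemma~\ref{lem:B_k}(2) the line $e(z_{i_1\ldots i_k})$ has nice intersection with at least $\delta\rho^{kN}$ level-$g_{k-1}$ squares $Q$ (all lying in one monochromatic family $\mathcal{Q}^+$ or $\mathcal{Q}^-$ inside some generation-$g_{k-2}$ square). Each such square $Q$ has nice intersection with $e(z_{i_1\ldots i_k})$, i.e.\ $Q\in\mathcal{A}_{z_{i_1\ldots i_k}}^{g_{k-1}}$. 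The second step is to observe that, conditionally on the positions of the level-$g_{k-1}$ squares (hence conditionally on $E_{k-1}$ and on which squares are nicely intersecting), the subtrees hanging below distinct level-$g_{k-1}$ squares evolve \emph{independently}, and for each such $Q$ the conditional probability of the event $A_{(k+1)N}(Q,z_{i_1\ldots i_k})$ is, by the self-similarity remark just before Lemma~\ref{lem:B_k} and the Main Lemma, at least $q$ (since $\Phi(Q,z_{i_1\ldots i_k})\in[-K,K]$ and $(k+1)N\ge N$). Third, since $B_k(z_{i_1\ldots i_k})$ is the union over $Q\in\mathcal{A}_{z_{i_1\ldots i_k}}^{g_{k-1}}$ of the events $A_{(k+1)N}(Q,z_{i_1\ldots i_k})$, its complement is contained in the event that \emph{all} of these at least $\delta\rho^{kN}$ independent trials fail, giving
\begin{equation*}
\pr\bigl(B_k(z_{i_1\ldots i_k})^c\mid E_{k-1}\bigr)\le (1-q)^{\delta\rho^{kN}}.
\end{equation*}
Finally, a union bound over the $4^{2N+\cdots+(k+1)N}$ choices of $i_1\ldots i_k$ yields
\begin{equation*}
\pr(E_k^c\mid E_{k-1})\le 4^{2N+\cdots+(k+1)N}(1-q)^{\delta\rho^{kN}},
\end{equation*}
which is the claimed bound.

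\textbf{Main obstacle.}
The delicate point is the conditional independence claim in the second step: one must make precise that, after conditioning on $E_{k-1}$ (an event measurable with respect to the positions of squares up to generation $g_{k-1}$ and finitely many line-intersection conditions), the families of descendants below the finitely many nicely-intersecting level-$g_{k-1}$ squares are independent, each governed by a copy of $\pr_{\Phi(Q,z_{i_1\ldots i_k})}$ with $\Phi(Q,z_{i_1\ldots i_k})\in[-K,K]$. This uses that the square $Q$ is already pinned down by the conditioning, that distinct squares at generation $g_{k-1}$ correspond to disjoint subtrees of $\mathcal{T}\times\mathcal{T}$, and that the $U$-variables on disjoint subtrees are independent under $\pr$; one then applies the Main Lemma to each subtree with the uniform bound $q$. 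A secondary technical check is the arithmetic $2N+\cdots+(k+1)N=g_k-g_{k-1}$, so that the number of level-$k$ intervals is indeed $4^{2N+\cdots+(k+1)N}$; this is immediate from the definition of $g_k$.
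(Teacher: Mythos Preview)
Your proposal is correct and follows essentially the same route as the paper: union bound over the $4^{2N+\cdots+(k+1)N}$ indices, then on $E_{k-1}$ invoke $B_{k-1}(z_{i_1\ldots i_{k-1}})$ together with Lemma~\ref{lem:B_k}(2) to obtain at least $\delta\rho^{kN}$ level-$g_{k-1}$ squares nicely intersecting $e(z_{i_1\ldots i_k})$, and use conditional independence of the subtrees plus the Main Lemma to bound the probability that all of them fail by $(1-q)^{\delta\rho^{kN}}$. One small slip: your ``secondary technical check'' $2N+\cdots+(k+1)N=g_k-g_{k-1}$ is not right (that difference is $(k+1)N$); the correct identity is $2N+\cdots+(k+1)N=g_k-g_0=g_k-N$, though the interval count follows directly from the subdivision construction anyway and does not depend on this.
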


\begin{pf}
We recall that $E_k$ was defined as
\[
E_k:=\bigcap\limits_{i_1\ldots i_k}B_k(z_{i_1\ldots i_k}).
\]
Therefore, we have to prove that
\[
\mathbb{P}\biggl(\bigcup_{i_1\dots i_k}B_{k}^{c}(z_{i_1\ldots i_k})\Big|E_{k-1}\biggr)\leq4^{2N+\cdots+(k+1)N}(1-q)^{\delta\rho^{kN}}.\label{40}
\]
Note that the number of indices $i_1\dots i_k$ on the left-hand side is
equal to $4^{2N+\cdots+(k+1)N}$. Therefore, it is enough to show that
for each index $i_1\dots i_k,$ we have
\[
\mathbb{P}(B_{k}^{c}(z_{i_1\dots i_k})|E_{k-1})\leq(1-q)^{\delta\rho^{kN}}.\label{41}
\]
By Definition \ref{45}, to see this, we have to prove that
\begin{equation}\label{42}
\mathbb{P}\biggl(\bigcap\limits_{Q\in{\mathcal{A}}_{z_{i_1\ldots i_k}}^{g_{k-1}}}A^{c}_{(k+1)N}(Q, z_{i_1\ldots i_k})\Big|E_{k-1}\biggr)\leq(1-q)^{\delta\rho^{kN}}.
\end{equation}
We assume $E_{k-1}$, so, in particular, we know that
$B_{k-1}(z_{i_1\ldots i_{k-1}})$ holds.
That is, there exists a level-$g_{k-2}$ square $Q_{\mathrm{big}}$ such that
the event
$A_{kN}(Q_{\mathrm{big}},z_{i_1\ldots i_{k-1}})$ holds.
By definition, this means that we can find at least $[\delta\rho
^{kN}]+1$ squares in $Q_{\mathrm{big}}$ in $\mathcal{A}_{z_{i_1\ldots
i_{k-1}}}^{g_{k-1}}$ having center below, and at least as many squares
having center above, the line $e(z_{i_1\ldots i_{k-1}})$.
Using the second part of Lemma \ref{lem:B_k} (for $k$ instead of
$k+1$), we obtain that
the line $e(z_{i_1\dots i_k})$ has nice intersection with either all the
squares above or with all the squares below the line $e(z_{i_1\ldots i_{k-1}})$.
Without loss of generality, we may assume the former.

However, for all these squares $Q,$ the events $ A_{(k+1)N}^{c}
(Q,z_{i_1\ldots i_k})$ are (conditionally) independent, so, to
obtain (\ref{42}), it is enough to show that
\begin{equation}\label{43}
\mathbb{P}\bigl( A_{(k+1)N}^{c}(Q,z_{i_1\ldots i_k})| Q\in\mathcal{A}_{z_{i_1\ldots i_k}}^{g_{k-1}} \bigr)\leq1-q
\end{equation}
and this follows directly from equation (\ref{a}).
\end{pf}

\begin{lemma}\label{lem:product}
For all $n\geq1$,
we have
\begin{equation}\label{el}
\prod_{j=1}^{\infty}\bigl(1-4^{[2+\cdots+(j+1)]n}(1-q)^{\delta\rho^{jn}}\bigr)>0.
\end{equation}
\end{lemma}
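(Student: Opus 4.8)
\textbf{Proof proposal for Lemma \ref{lem:product}.}

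The plan is to show that the infinite product converges to a strictly positive limit by verifying that the sum of the terms being subtracted is finite; that is, it suffices to show
\begin{equation*}
\sum_{j=1}^{\infty}4^{[2+\cdots +(j+1)]n}(1-q)^{\delta \rho ^{jn}}<\infty ,
\end{equation*}
since a product $\prod_j(1-c_j)$ with $0\le c_j<1$ and $\sum_j c_j<\infty$ has a strictly positive limit (and one checks separately that each factor is strictly positive, i.e. each $c_j<1$, which will follow once the general term tends to $0$, discarding at most finitely many leading factors and noting those are positive as long as $q>0$). So the whole statement reduces to a growth comparison between the two competing exponents as $j\to\infty$: a base-$4$ term whose exponent is the quadratic $[2+\cdots+(j+1)]n=\left(\tfrac12(j+1)(j+2)-1\right)n$ in $j$, against a term $(1-q)^{\delta\rho^{jn}}$ whose exponent grows \emph{geometrically} in $j$ (because $\rho>1$, guaranteed by Condition (\ref{cond:rho})).

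First I would write the $j$-th term as $\exp\!\big(\alpha j^2n\log 4+O(jn)-\delta\rho^{jn}\,|\log(1-q)|\big)$ for a suitable constant $\alpha=\tfrac12$, using $0<q<1$ so that $|\log(1-q)|>0$. Since $\rho^{jn}=e^{jn\log\rho}$ with $\log\rho>0$, the negative contribution $-\delta|\log(1-q)|\,e^{jn\log\rho}$ dominates the positive contribution $O(j^2)$ for all large $j$: an exponentially growing quantity eventually exceeds any polynomial. Hence the $j$-th term is bounded, for $j$ large, by $e^{-c\rho^{jn}}$ for some $c>0$, and the tail of the series is summable (indeed it is super-geometrically small). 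The finitely many remaining terms are each finite, so the series converges.

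Finally I would conclude: since $\sum_j c_j<\infty$ with $c_j:=4^{[2+\cdots+(j+1)]n}(1-q)^{\delta\rho^{jn}}\ge 0$, we have $c_j\to 0$, so $c_j<1$ for all $j\ge j_0$; the partial products $\prod_{j=j_0}^{M}(1-c_j)$ are decreasing and bounded below by $\exp\!\big(-\sum_{j\ge j_0}\tfrac{c_j}{1-c_j}\big)>0$ (using $1-c\ge e^{-c/(1-c)}$ for $0\le c<1$), hence converge to a positive limit; multiplying by the finitely many positive factors $\prod_{j=1}^{j_0-1}(1-c_j)$ — each positive because $c_j<1$, which in turn holds as $0\le q$ forces $(1-q)^{\delta\rho^{jn}}\le 1$ and one checks the product of the two pieces stays below $1$, or more simply one absorbs any non-positive factor into "finitely many" and notes the statement only needs the product to be nonzero — we get the claimed strict positivity of the full product.

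The main obstacle is essentially bookkeeping rather than conceptual: the only real content is the elementary fact that geometric growth ($\rho^{jn}$, from $\rho>1$) beats polynomial growth ($j^2$, from the quadratic exponent $g_j$) inside the exponent, so the terms decay super-geometrically. The one point requiring a little care is ensuring every factor $1-c_j$ is genuinely positive (so the product is not merely "eventually positive" but nonzero as a whole); this is handled by noting $q<1$ and $\rho>1$ make $c_j\to0$, combined with the standard lower bound $\prod(1-c_j)\ge\exp(-\sum c_j/(1-c_j))$ for the tail.
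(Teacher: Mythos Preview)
Your proposal is correct and follows essentially the same route as the paper: reduce positivity of the product to summability of $c_j:=4^{[2+\cdots+(j+1)]n}(1-q)^{\delta\rho^{jn}}$, then observe that the geometric growth $\rho^{jn}$ in the exponent of $(1-q)$ beats the quadratic growth of the exponent of $4$, forcing $c_j\to 0$ fast enough. The paper compresses this into showing $a_j\le e^{-j}$ for large $j$ via $\tfrac1j\log a_j\le -1$; you give the same comparison in slightly different packaging and add more explicit detail on the product-to-sum reduction and on handling the finitely many early factors, which the paper leaves implicit.
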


\begin{pf}
We have to show that $\sum_{j=1}^\infty a_j$ converges, where
\[
a_j=4^{(1/2) j(j+1)n}(1-q)^{\delta\rho^{jn}}.
\]
It is therefore sufficient that $a_j\le e^{-j}$ for all large $j$.
This is true since

\[
\frac1j \log a_j= \frac12(j+1)n \log4 + \frac1{j}
\delta(\rho^n)^j \log(1-q) \le-1,
\]
which holds for $j$ large
enough since $\rho^n>1$ and $\log(1-q)<0$.
\end{pf}

Therefore, using Lemmas \ref{lem:subset}, \ref{lem:E_k} and \ref{lem:product}, we obtain that
\[
\pr(C_2-C_1\supset I)\geq q\prod_{k=1}^{\infty}\bigl(1-4^{[2+\cdots+(k+1)]N}(1-q)^{\delta\rho^{kN}}\bigr)>0.
\]
Combining this with Proposition \ref{01law} from Section \ref
{0--1-law}, this completes the proof of Theorem \ref{17}.

In the next six sections, we prove our \hyperref[15]{Main Lemma}.

%s5 ###
\section{Distribution of types}\label{48}

In this section, the density function of $\Phi(Q,x)$ will be
determined for the four squares $Q$ from $\mathcal{S}_1$.

%s5.1 ###
\subsection{The distribution of $\Phi(Q,x)$}

Let $U_1,U_2,U_3,U_4$ be four independent $\operatorname{Uniform}([0,\gap]
)$-distributed random variables.
The left corners of the two level-one intervals of the random Cantor
set $C_i$ are determined by $U_{2i-1},U_{2i}$ for $i=1,2$.
Let $(u_i,v_i)$ be the lower-left corner of the squares $Q_i$,
$i=1,\dots,4$ (see Figure \ref{fig:square}). Then,
\begin{eqnarray*}
(u_1,v_1) &=& \biggl(b+U_1,\frac{1}{2}+\frac{a}{2}+U_4\biggr), \\
(u_2,v_2) &=& \biggl(\frac{1}{2}+\frac{a}{2}+U_2,\frac{1}{2}+\frac{a}{2}+U_4\biggr),\\
(u_3,v_3) &=& \biggl(\frac{1}{2}+\frac{a}{2}+U_2,b+U_3\biggr),\\
(u_4,v_4) &=& (b+U_1,b+U_3).
\end{eqnarray*}

%%
%#f07
\begin{figure}

\includegraphics{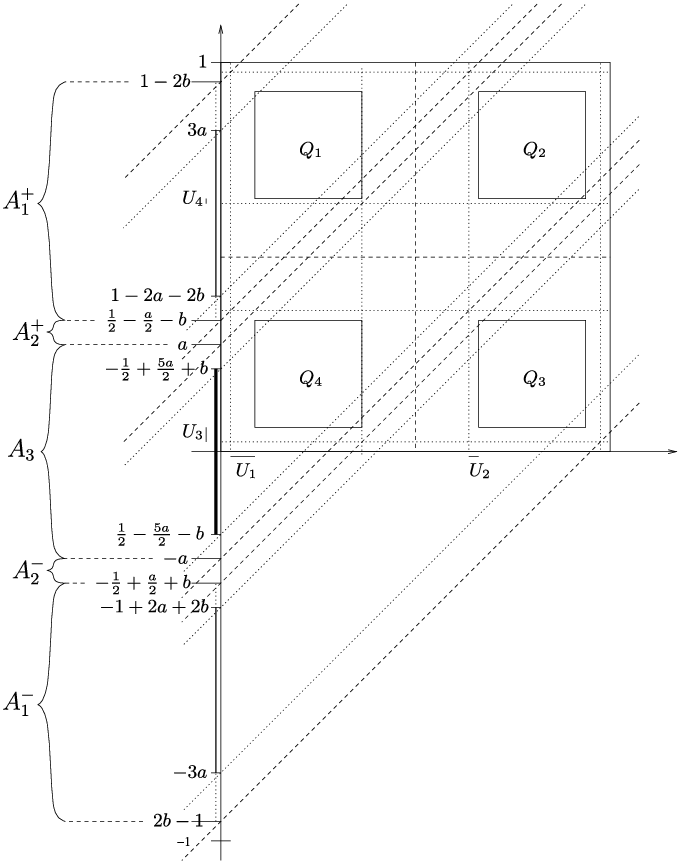}

\caption{If $x$ is an element of the bold vertical line,
then the line $e(x)$ intersects exactly two squares. If $x$ is an
element of one of the two plain vertical lines, then $e(x)$ intersects
one square. If $x$ is an element of one of the four dotted vertical
lines, then $e(x)$ intersects at most one square. If $x$ is such that
$a\leq x\leq 1-2a-2b$ or $-1+2a+2b\leq x\leq-a,$ then $e(x)$
intersects at most two squares with probability one. If $x$ is such
that $-\frac{1}{2}+\frac{5a}{2}+b\leq x\leq a$ or $-a\leq x\leq
\frac{1}{2}-\frac{5a}{2}-b,$ then $e(x)$ intersects exactly two squares.}\label{fig:square}
\end{figure}

For an $x\in[-1,1],$ we define $\Phi_i(x):=\Phi(Q_i,x)$.
From (\ref{eq:Phi}), simple computations yield
\begin{eqnarray}\label{eq:X(x)_type_def1}
\ds\Phi_1(x) &=& \cases{
\displaystyle\frac{1}{a}\biggl( -\frac{1}{2}-\frac{a}{2}+b+U_1-U_4+x\biggr), \cr
\hspace{37pt} \mbox{if } \displaystyle\frac{1}{a}\biggl(-\frac{1}{2}-\frac{a}{2}+b+U_1-U_4+x\biggr)\in[-1,1],\cr
\displaystyle\nex, \qquad \mbox{otherwise},}\nonumber\\ [-8pt]\\ [-8pt]
\ds\Phi_2(x) &=& \cases{\displaystyle\frac{1}{a}( U_2-U_4+x ), \qquad \mbox{if } \displaystyle\frac{1}{a}(U_2-U_4+x)\in[-1,1]\vspace{2pt},\cr
\displaystyle\nex, \hspace{86pt} \mbox{otherwise}\nonumber
}
\end{eqnarray}
and, similarly,
\begin{eqnarray}\label{eq:X(x)_type_def2}
\ds\Phi_3(x)  &=&  \cases{
\displaystyle\frac{1}{a}\biggl( \frac{1}{2}+\frac{a}{2}-b+U_2-U_3+x\biggr)\vspace*{2pt},\cr
\hspace{37pt}\mbox{if } \displaystyle \frac{1}{a}\biggl(\frac{1}{2}+\frac{a}{2}-b+U_2-U_3+ x\biggr)\in[-1,1],\cr
\displaystyle\nex, \qquad \mbox{otherwise},
}\nonumber\\ [-8pt]\\ [-8pt]
\ds\Phi_4(x)  &=&  \cases{
\displaystyle\frac{1}{a}( U_1-U_3+x ), \qquad \mbox{if } \displaystyle\frac{1}{a}(U_1-U_3+x )\in[-1,1]\vspace{2pt},\cr
\displaystyle\nex, \hspace{86pt}\mbox{otherwise}.\nonumber
}
\end{eqnarray}

To get a better geometric understanding of the distribution of the
random variables $\Phi_i(x),$ we define the three slanted stripes
$S_k$, $k=1,2,3$ (see Figure \ref{fig:simple_kernel}), in such a way
that $S_k\subset[-1,1]^2$ is bounded by the lines $\ell_{2k-1},\ell
_{2k}$, where
\begin{eqnarray}\label{eq:def_ell_1,2,3,4,5,6}
\hspace{29pt}\ds\ell_1 (x) &=& \frac{1}{a}x +\frac{1}{a}(1-a-2b), \qquad\ds\ell_2 (x) = \frac{1}{a}x +2,\qquad\ds\ell_3 (x) = \frac{1}{a}x
+\frac{\gap}{a},\nonumber\\ [-8pt]\\ [-8pt]
\hspace{29pt}\ds\ell_4 (x) &=& \frac{1}{a}x -\frac{\gap}{a},\qquad\ds\ell_5(x) = \frac{1}{a}x -2, \qquad\ds\ell_6 (x) = \frac{1}{a}x -\frac{1}{a}(1-a-2b).\nonumber
\end{eqnarray}

An immediate calculation shows that the following result holds.

%
%f8 ###
\begin{figure}

\includegraphics{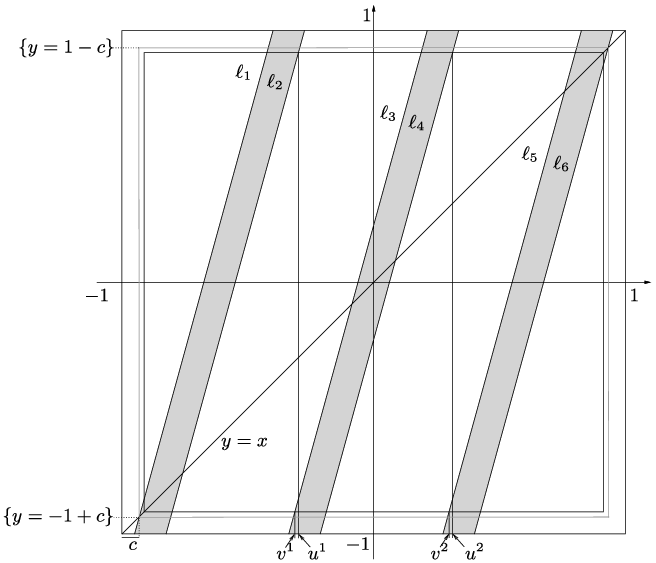}

\caption{The support of the density functions in the
simple case.}\label{fig:simple_kernel}
\end{figure}

\begin{lemma}\label{105}
For every $x\in[-1,1]$ and every $i=1,\dots,4,$ if $\Phi_i(x)\ne
\Theta, $ then
\[
(x,\Phi_i(x))\in S_1\cup S_2\cup S_3.\label{106}
\]
\end{lemma}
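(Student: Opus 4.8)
The statement is purely a bookkeeping claim about the explicit formulas in \eqref{eq:X(x)_type_def} and the lines in \eqref{eq:def_ell_1,2,3,4,5,6}: whenever $\Phi_i(x)\neq\Theta$, the point $(x,\Phi_i(x))$ lies in one of the three slanted stripes. The plan is to verify this one index $i$ at a time, in each case rewriting $\Phi_i(x)$ as $\tfrac1a x + (\text{const} + \text{linear combination of the }U_j)$ and then bounding the additive term using the range of each $U_j$, namely $U_j\in\left[0,\tfrac{1-3a-2b}{2}\right]$.

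First I would handle $\Phi_2$ and $\Phi_4$ together, since they have the simplest form. For $\Phi_2(x)=\tfrac1a(U_2-U_4+x)=\tfrac1a x + \tfrac1a(U_2-U_4)$, the quantity $U_2-U_4$ ranges over $\left[-\tfrac{1-3a-2b}{2},\tfrac{1-3a-2b}{2}\right]$, so the intercept $\tfrac1a(U_2-U_4)$ lies between $-\tfrac{1}{2a}(1-3a-2b)$ and $\tfrac{1}{2a}(1-3a-2b)$; comparing with \eqref{eq:def_ell_1,2,3,4,5,6} this says exactly that $(x,\Phi_2(x))$ lies between $\ell_4$ and $\ell_3$, i.e. in the middle stripe $S_2$. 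The same computation applies verbatim to $\Phi_4(x)=\tfrac1a x+\tfrac1a(U_1-U_3)$, so $(x,\Phi_4(x))\in S_2$ as well. Next I would do $\Phi_1$: here $\Phi_1(x)=\tfrac1a x + \tfrac1a\bigl(-\tfrac12-\tfrac a2+b+U_1-U_4\bigr)$, and the intercept, as $U_1-U_4$ ranges over $\left[-\tfrac{1-3a-2b}{2},\tfrac{1-3a-2b}{2}\right]$, lies in an interval of length $\tfrac1a(1-3a-2b)$ whose endpoints I would check coincide with the intercepts of $\ell_1$ and $\ell_2$ (equivalently $\tfrac1a(1-a-2b)$ and $2$); this places $(x,\Phi_1(x))$ in the top stripe $S_1$. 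Symmetrically, $\Phi_3(x)=\tfrac1a x+\tfrac1a\bigl(\tfrac12+\tfrac a2-b+U_2-U_3\bigr)$, whose intercept sweeps the interval with endpoints those of $\ell_5$ and $\ell_6$ (i.e. $-2$ and $-\tfrac1a(1-a-2b)$), so $(x,\Phi_3(x))\in S_3$.

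Since in every case the constraint $\Phi_i(x)\neq\Theta$ is not even needed for the containment in the stripe (it only further restricts $(x,\Phi_i(x))$ to the band $[-1,1]$ in the second coordinate), the lemma follows by collecting the four cases. The only thing that really needs care is matching the numerical endpoints: one must confirm that the center of each stripe sits at the right height and that its half-width is exactly $\tfrac{1}{2a}(1-3a-2b)$, which is forced by the common range of the $U_j$'s. I expect this endpoint-matching to be the only potential pitfall, and it is a short direct computation rather than a genuine obstacle; the result is really the observation that each $\Phi_i$ is an affine function of $x$ with slope $\tfrac1a$ and a bounded, explicitly located intercept.
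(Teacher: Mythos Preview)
Your approach is exactly the ``immediate calculation'' the paper alludes to and omits; the paper gives no proof beyond that phrase, so there is nothing to compare at the level of strategy. Your plan---write each $\Phi_i(x)$ as $\tfrac1a x$ plus an intercept, bound the intercept using $U_j\in[0,\tfrac{1-3a-2b}{2}]$, and match the resulting range to the intercepts of the $\ell_k$---is correct and is the natural verification.

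One slip to fix when you carry out the endpoint check: you have the assignments of $\Phi_1$ and $\Phi_3$ to stripes swapped. For $\Phi_1$ the intercept is $\tfrac1a\bigl(-\tfrac12-\tfrac a2+b+U_1-U_4\bigr)$, and as $U_1-U_4$ ranges over $\bigl[-\tfrac{1-3a-2b}{2},\tfrac{1-3a-2b}{2}\bigr]$ this sweeps $[-2,\,-\tfrac1a(1-a-2b)]$, which are the intercepts of $\ell_5$ and $\ell_6$; hence $(x,\Phi_1(x))\in S_3$, not $S_1$. Symmetrically, $\Phi_3$ lands in $S_1$. (Geometrically this is clear: $Q_1$ is the upper-left square, so $e(x)$ meets it only for $x>0$, and with slope $1/a>1$ the stripe containing such points in $[-1,1]^2$ must have negative intercept.) Since the lemma only asserts membership in $S_1\cup S_2\cup S_3$, this swap does not affect the conclusion, but your stated endpoint identities for $\Phi_1$ and $\Phi_3$ are literally incorrect and should be corrected.
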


Let us call $\ell_j$ the graph of the function $\ell_j(x)$.
Observe that the reflection in the origin of $\ell_j$
is $\ell_{7-j}$ for $j=1,\dots,6$. For
a point $(x_1,x_2)\in\mathbb{R}^2$, we write $\pi_m(x_1,x_2):=x_m$,
$m=1,2$. We then define $c>0$ by
\[
-1+c:=\pi_1 (\ell_1\cap\{y=x\})\label{107}
\]
and obtain $c=\frac{2b}{1-a}$. By symmetry, it follows that
\[
1-c=\pi_1(\ell_6\cap\{y=x\}).\label{108}
\]
Using the fact that $-1+2b=\pi_1(\ell_1\cap\{y=-1\}),$
it follows from the symmetry mentioned above that
\begin{eqnarray}\label{109}
 && x\notin(-1+2b, 1-2b)\nonumber\\ [-8pt]\\ [-8pt]
 &&\qquad\Longrightarrow\quad e(x)\mbox{ does not intersect any level-one
 square}.\nonumber
\end{eqnarray}
The functions $\ell_1(x)$, $\ell_6(x)$ have repelling fixed point
$-1+c$, $1-c,$ respectively. Therefore,
\begin{eqnarray}\label{110}
&& x\in[-1,-1+c)\cup
(1-c,1]\nonumber\\ [-8pt]\\ [-8pt]
&&\qquad\Longrightarrow\quad \exists n \mbox{ such that }(x)\cap Q= \varnothing \mbox{ for all }
Q\in\mathcal{S}_n.\nonumber
\end{eqnarray}
With probability $1,$ no line $e(x)$ can intersect more than two
descendants, in fact, $[-1+2b, 1-2b]$ can be partitioned into five
sets, according to which descendants can be produced, given by (see
also Figure \ref{fig:square})
\begin{eqnarray}\label{def:A}
A_1^- &=& \biggl[-1+2b,-\dfrac{1}{2}+\dfrac{a}{2}+b\biggr),\qquad A_1^+=\biggl(\dfrac{1}{2}-\dfrac{a}{2}-b,1-2b
\biggr],\nonumber\\
A_2^- &=& \biggl[-\dfrac{1}{2}+\dfrac{a}{2}+b,-a\biggr),\qquad A_2^+=\biggl(a,\dfrac{1}{2}-\dfrac{a}{2}-b \biggr],\\
A_3 &=& \biggl[-a,a\biggr].\nonumber
\end{eqnarray}

\begin{lemma}\label{lem:at_most_two}
If $x\in A_3,$ then $x$ can only produce descendants with type $\Phi
_2(x)$ and/or $\Phi_4(x)$.
If $x\in A_1^+$ (resp. $x\in A_1^-$), then $x$ can produce at most one
descendant with type $\Phi_1(x)$ [resp. $\Phi_3(x)$].
If $x\in A_2^+,$ then there are two possibilities.
First, if $x$ produces $\Phi_1(x),$ then $\Phi_2(x)$ and $\Phi_4(x)$
cannot be born.
Second, if $x$ produces any of $\Phi_2(x)$ and $\Phi_4(x),$ then
$\Phi_1(x)$ cannot be born. If $x\in A_2^-$, then there are two
similar possibilities.
\end{lemma}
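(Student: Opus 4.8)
The plan is to read off everything from the explicit formulas \eqref{eq:X(x)_type_def} for $\Phi_1,\dots,\Phi_4$, remembering that a descendant of type $\Phi_i(x)$ is \emph{born} precisely when the corresponding affine expression lies in $[-1,1]$ (otherwise the value is $\Theta$ and no descendant of that label appears). So for each of the four labels $i$ I would first solve the inclusion
$$\tfrac1a\bigl(\text{(shift}_i)+U_{\sigma(i)}-U_{\tau(i)}+x\bigr)\in[-1,1]$$
for $x$, treating the $U$'s as ranging over $[0,\frac{1-3a-2b}{2}]$, and record the \emph{largest} $x$-interval on which birth of label $i$ is possible for \emph{some} choice of the $U$'s, and the sub-interval on which it is \emph{certain} for \emph{every} choice. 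This is exactly the content already encoded in Figure~\ref{fig:square} and in \eqref{109}--\eqref{110}; the five sets $A_3, A_2^\pm, A_1^\pm$ are precisely the maximal pieces of $T$ on which the list of possibly-born labels is constant.

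The main computation is routine interval arithmetic, so I would just carry it out label by label. For $A_3=[-a,a]\cap T$: plug $x\in[-a,a]$ into the conditions for $\Phi_1$ and $\Phi_3$. For $\Phi_1$ the relevant shift is $-\frac12-\frac a2+b$, and since $U_1-U_4\in[-\frac{1-3a-2b}{2},\frac{1-3a-2b}{2}]$ the expression $\frac1a(-\frac12-\frac a2+b+U_1-U_4+x)$ stays strictly below $-1$ for all $x\le a$ (indeed its maximum over the $U$'s and over $x=a$ is $\frac1a(-\frac12-\frac a2+b+\frac{1-3a-2b}{2}+a)=\frac1a(-a)= -1$, attained only in a degenerate corner), so label $1$ cannot be born; symmetrically label $3$ cannot be born, leaving only labels $2$ and $4$, whose affine forms $\frac1a(U_2-U_4+x)$, $\frac1a(U_1-U_3+x)$ do fall into $[-1,1]$ for suitable $U$'s when $|x|\le a$. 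For $A_1^+=(\frac12-\frac a2-b,1-2b]\cap T$: here I would show the $\Phi_2$ and $\Phi_4$ conditions force values $>1$ (hence $\Theta$) for every choice of $U$'s, so only label $1$ survives, and even that only for some $U$'s — giving ``at most one descendant of type $X_1(x)$''; $A_1^-$ is the mirror image by the reflection symmetry $\ell_j\leftrightarrow\ell_{7-j}$ noted after \eqref{109}. For $A_2^+=(a,\frac12-\frac a2-b]\cap T$: the point is that the birth region for label $1$ and the birth region for labels $2,4$ overlap over this $x$-range but are governed by \emph{disjoint} constraints on the $U$'s — if label $1$ is born then $\frac1a(-\frac12-\frac a2+b+U_1-U_4+x)\ge -1$, i.e. $U_1-U_4\ge \frac12+\frac a2-b-x-a$, which (using $x\le \frac12-\frac a2-b$) pushes $U_1$ large and $U_4$ small enough that $\frac1a(U_1-U_3+x)>1$ and likewise $\frac1a(U_2-U_4+x)$ leaves $[-1,1]$, killing labels $4$ and $2$; and conversely if either of labels $2,4$ is born the inequality on the $U$'s runs the other way and kills label $1$. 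This is the ``two mutually exclusive possibilities'' in the statement, and $A_2^-$ is again the mirror image.

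The only genuinely delicate point — the part I expect to require care rather than mechanical substitution — is the mutual-exclusion claim over $A_2^\pm$: one must check that the two systems of linear inequalities in $(U_1,U_2,U_3,U_4,x)$ cannot be simultaneously satisfied for any admissible values, not merely ``generically''. This reduces to verifying a handful of inequalities among the constants $a$, $b$, and the endpoints $\pm\frac{1-3a-2b}{2}$ of the $U$-range, all of which follow from the standing hypothesis \eqref{2} that $a>\frac14$ and $3a+2b<1$ together with the geometry already set up in \eqref{109}--\eqref{110}. Once these constant inequalities are in hand, the lemma is immediate, and the remaining cases ($A_3$ and $A_1^\pm$) need only the cruder observation that the affine forms for the excluded labels never re-enter $[-1,1]$ for the relevant $x$'s, which is clear from Figure~\ref{fig:square}.
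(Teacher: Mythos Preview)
Your overall plan---read off which labels can be born from the explicit affine formulas~\eqref{eq:X(x)_type_def} and then check mutual exclusion over $A_2^\pm$ by comparing the resulting linear inequalities in $(U_1,U_2,U_3,U_4,x)$---is sound and is essentially an algebraic unpacking of the paper's geometric argument. The paper argues more directly: $e(x)$ hits $Q_i$ iff $x\in Proj_{45^\circ}(Q_i)$, and one checks that $Proj_{45^\circ}(Q_1)\cap Proj_{45^\circ}(Q_4)\ne\emptyset$ forces $U_3-U_4=\tfrac{1-3a-2b}{2}$ (hence $U_4=0$), that $Proj_{45^\circ}(Q_1)\cap Proj_{45^\circ}(Q_2)\ne\emptyset$ forces $U_1-U_2=\tfrac{1-3a-2b}{2}$, and so on---each a probability-zero event. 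Your route and the paper's route are the same computation in different clothing.

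However, your execution of the key mutual-exclusion step over $A_2^+$ contains a genuine error. From ``label~1 born'' you correctly extract $U_1-U_4\ge \tfrac12-\tfrac a2-b-x$, but you then assert this ``pushes $U_1$ large and $U_4$ small'' and use those \emph{individual} bounds to force $\tfrac1a(U_1-U_3+x)>1$ and $\tfrac1a(U_2-U_4+x)\notin[-1,1]$. A lower bound on the difference $U_1-U_4$ gives no control on $U_1$ or $U_4$ separately, so neither conclusion follows. If you actually combine the two birth inequalities, the variable $U_1$ cancels in the label~1/label~4 comparison (leaving $U_3-U_4\ge\tfrac{1-3a-2b}{2}$), and $U_4$ cancels in the label~1/label~2 comparison (leaving $U_1-U_2\ge\tfrac{1-3a-2b}{2}$). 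Each of these is attained only on the boundary of the cube $[0,\tfrac{1-3a-2b}{2}]^4$, i.e.\ with probability zero. This also shows that your stronger claim---that the two systems ``cannot be simultaneously satisfied for \emph{any} admissible values''---is false: they \emph{are} simultaneously satisfiable on a Lebesgue-null set, and the lemma is an almost-sure statement, exactly as the paper's proof makes explicit.
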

\begin{pf}
In Figure \ref{fig:square}, observe that $\operatorname{Proj}_{45^{\circ}}(Q_1)\cap
\operatorname{Proj}_{45^{\circ}}(Q_4)\ne\varnothing$ can happen only in the extreme
situation if the bottom of the square $Q_1$
is the same as the bottom of the dotted square which contains $Q_1$
on Figure \ref{fig:only_squares}. This means that $U_4=0,$ which
happens with probability zero. Similarly,
$\operatorname{Proj}_{45^{\circ}}(Q_3)\cap \operatorname{Proj}_{45^{\circ}}(Q_4)\ne\varnothing$
happens only if $U_2=0,$ which also has probability zero.
$\operatorname{Proj}_{45^{\circ}}(Q_1)\cap \operatorname{Proj}_{45^{\circ}}(Q_3)=\varnothing$
always holds, which completes the proof of our lemma.
\end{pf}

%s5.2 ###
\subsection{The density functions}\label{sec:dens}
In this subsection, we will determine the density functions $f_{\Phi
_i(x)}(y)$ of the random variables $\Phi_i(x)$, $i=1,2,3,4,$ given
explicitly by (\ref{eq:X(x)_type_def1}) and (\ref{eq:X(x)_type_def2}).
We do not call them \textit{probability} density functions since the
$\Phi_i(x)$ may be equal to $\nex$ with
positive probability for some $x$.
The probability density function of the difference of two independent
$\operatorname{Uniform} ([0,\gap] )$-distributed random variables is the
triangular distribution given by $f_{\triangle}(z)=0$ if $|z|>\gap$ and
for $0\leq|z|\leq\gap$ by
\begin{equation}\label{eq:def_f}
f_{\triangle}(z)=\frac{1}{\gap^2}(\gap-|z|).
\end{equation}
To get $f_{\Phi_i(x)}(y),$ we apply simple transformations
to $f_{\triangle}(z)$ and find
\begin{eqnarray}\label{eq:f_density_function}
f_{\Phi_i(x)}(y) = af_{\triangle}(ay+c_i-x) \ind_{[-1,1]}(y)
\end{eqnarray}
with $c_1=-c_3=\frac{1}{2}+\frac{a}{2}-b$ and $c_2=c_4=0$.

From the definition,
\[
\pr\bigl(\Phi_i(x)=\nex\bigr)=1-\int_{[-1,1]} f_{\Phi_i(x)}(y)\,\di y.
\]

%s6 ###
\section{A uniformly positive kernel}\label{sec:up_kernel_simple}

Here, and in the next two sections, we are going to define the type
space $T$ of the branching process introduced in Section~\ref{116}. In
order to ensure that conditions (\ref{cond:dens}), (\ref{cond:rho}), (\ref{cond:secmom}) of Section \ref
{117} hold, we introduce a type space $T$ which also satisfies
properties (A), (B), (C) of Section \ref{116}.
It follows from (\ref{110}) that we must choose our type space
$T\subset[-1+c,1-c]$.

Unfortunately, the construction of the type space $T$ satisfying the
above conditions is quite involved and technical for
those values of the parameters $a,b$ which do not satisfy (\ref{121}).
Therefore, we split the presentation into two parts.
In this section, we present the construction of $T$
across three lemmas: Lemmas \ref{lemA:support1}, \ref
{lemA:eigenvalue} and~\ref{lemA:support2}. In the next section, we
present the general case with the corresponding Lemmas \ref
{lem:support1}, \ref{lem:eigenvalue} and \ref{lem:support2}. The main
difference between these lemmas lies in the proofs of Lemmas \ref
{lem:support1} and \ref{lemA:support1}. Lemma \ref
{lem:eigenvalue} is almost the same as Lemma \ref{lemA:eigenvalue}.
Finally, the proof of Lemma \ref{lem:support2} follows the same line
as the proof of Lemma \ref{lemA:support2}, but is more technical.

%s6.1 ###
\subsection{Descendant distributions and the kernel of the branching
process}\label{sec:6.1}

We introduce the random variables $X_1(x),X_2(x),X_3(x),X_4(x)$ for
$1\leq i\leq4$ by
\begin{equation}\label{def_X_i(x)}
X_i(x)=\cases{
\Phi_i(x), &\quad if $\Phi_i(x)\in T$,\cr
\nex, &\quad otherwise.
}
\end{equation}
So, the density of $X_i(x)$ is
\begin{equation}\label{def:f_xi}
f_{x,i}(y):=f_{\Phi_i(x)}(y)\ind_T(y)
\end{equation}
for $i=1,\dots,4$. In general, $X_i(x)$ also has an atom: $\pr
(X_i(x)=\nex)=1-\int_T f_{x,i}(y)\,\di y$.

\begin{figure}[b]

\includegraphics{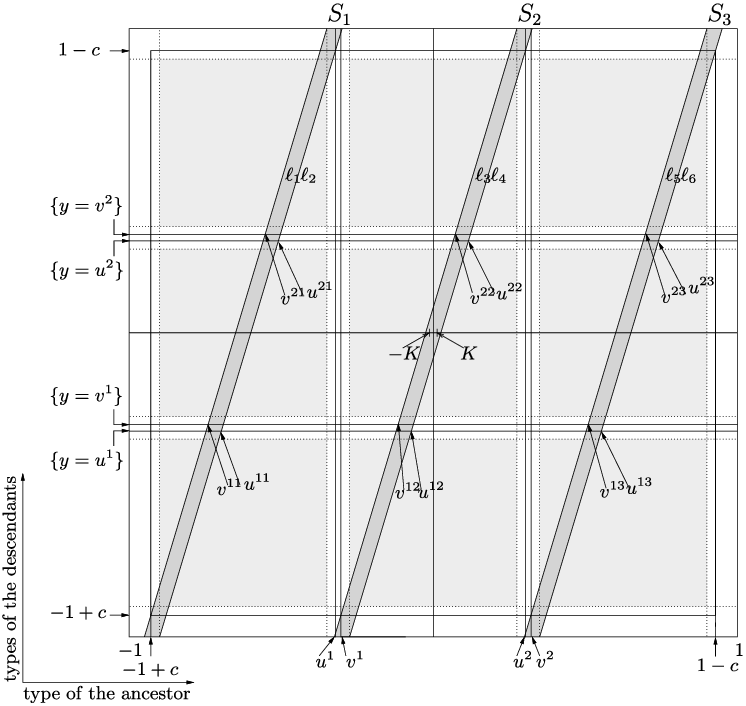}

\caption{Some points and lines related to the
kernel $m(x,y)$ if $l=1.$}\label{fig:kernel1}
\end{figure}

Recall [see equation (\ref{m-kernel})] that the kernel of the
branching process can be expressed as the sum of the density functions
of the random variables $X _i(x)$, $i=1,\dots,4$:
\[
m(x,y)=f_{x,1}(y)+f_{x,2}(y)+f_{x,3}(y)+f_{x,4}(y).\label{118}
\]
The structure of the support of this kernel is very important for the sequel.
Since the functions $f_{x,i}(y)$ ($i=1,2,3,4$) are piecewise continuous
on $[-1,1]$, $m(\cdot,\cdot)$ is piecewise continuous
on $[-1,1]\times[-1,1]$. The support of $m(\cdot,\cdot)$ is a subset
of the three slanting stripes $S_k$, $k=1,2,3,$ introduced earlier; see
also Figure \ref{fig:simple_kernel}.

%s6.2 ###
\subsection{The possible holes in the support of the kernel of $\mathcal{Z}$}\label{123}

We have seen in (\ref{110}) that the branching process with ancestor
type in the set
$[-1,-1+c]$ or $[1-c,1]$ dies out in a finite number of generations
almost surely.
Therefore, it is reasonable to
restrict the type space to
$[-1+c+\eps,1-c-\eps]$ for some small positive $\eps$. However, in
some cases, we have to make further restrictions. Namely, for $i=1,2,$
we define
\begin{equation}\label{122}
u^i:=\pi_1(\ell_{2i}\cap\{y=1-c\}),\qquad v^i:=\pi_1(\ell_{2i+1}\cap\{y=-1+c\});
\end{equation}
see Figure \ref{fig:simple_kernel}.
Clearly, $u^1-v^1=u^2-v^2$ and an easy calculation shows that
\begin{equation}\label{111}
v^1<u^1 \quad\Longleftrightarrow \quad c<\frac{\gap}{2a}.
\end{equation}
We remark that this condition is equivalent to the condition in
equation (\ref{121}) (see also Figure \ref{fig:a-b-region}).
On the other hand, if $u^i<v^i$, $i=1,2,$ holds, then, for $x\in
[u^i,v^i],$ the set
\begin{equation}\label{113}
E_1(x):=\{y\dvtx m(x,y)>0\}
\end{equation}
is contained in $[-1,-1+c]\cup[1-c,1]$.
This implies that the process dies out in finitely many steps for $x\in
[u^i,v^i]$
(see Figure \ref{fig:kernel1}). Therefore, if the condition
stated in (\ref{111}) does not hold, then we have to make more
restrictions on our type space $[-1+c+\eps,1-c-\eps]$. This is what
we are going to do in Section \ref{sec:up_kernel}. For the convenience
of the reader, in Section \ref{112}, we treat the simpler case when
(\ref{111}) holds.

%

%s7 ###
\section{A uniformly positive kernel in the simple case}\label{112}

In the remainder of this section, we will prove that if (\ref{111})
holds, that is, $v^1<u^1$, then we can choose a sufficiently small
$\varepsilon_0>0$ such that
\[
T=[-1+c+\varepsilon_0,1-c-\varepsilon_0]\label{114}
\]
satisfies conditions (\ref{cond:dens}), (\ref{cond:rho}) and (\ref
{cond:secmom}) [and also properties (A), (B), (C)]. The kernel in
the simple case is illustrated in Figure \ref{fig:simple_kernel}.

\setcounter{lemA}{6}
\renewcommand{\thelemA}{\arabic{lemA}A}
\begin{lemA}\label{lemA:support1}Assume that $v^1<u^1$. Fix an
$\varepsilon>0$ satisfying
\begin{equation}\label{eq:Aeps}
\eps<\frac{\gap}{2a}-c.
\end{equation}
Further, in this simpler case, let
\begin{equation}\label{eq:AT}
\T=\T(\eps)=[-1+c+\eps,1-c-\eps].
\end{equation}
Then, the kernel $m(x,y)$ of the branching process $\mathcal{Z}$ has
the following property:
\begin{equation}\label{condA:bigger_kappa}
\quad\exists\kappa>0 \mbox{ such that } \forall x\in T, \mbox{ the set } E_1(x) \mbox{ contains an interval of length }\kappa.
\end{equation}
\end{lemA}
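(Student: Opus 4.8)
The plan is to unpack the definition of $E_1(x)=\{y : m(x,y)>0\}$ using the explicit densities in \eqref{eq:f_density_function} and to show that, for $x$ in the restricted type space $T=[-1+c+\eps,1-c-\eps]$, at least one of the four densities $f_{x,i}(\cdot)$ is strictly positive on an interval whose length is bounded below by a constant $\kappa$ independent of $x$. Recall that $f_{\Phi_i(x)}$ is, up to the affine change of variables coming from \eqref{eq:X(x)_type_def}, a triangular density with base $1-3a-2b$ carried over an interval of $y$-values of length $(1-3a-2b)/a$; intersecting with $\ind_T(y)$ can only shrink this support. So the content of the lemma is really a covering statement: as $x$ ranges over $T$, the three slanting stripes $S_1,S_2,S_3$ (whose $y$-fibers are exactly the supports of $f_{x,1},\,f_{x,2}=f_{x,4},\,f_{x,3}$) sweep across $T\times T$ in such a way that every vertical line $\{x\}\times T$ meets the interior of at least one $S_k\cap(T\times T)$ in a segment of length at least $\kappa$.

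First I would partition $T$ according to the five sets $A_1^\pm, A_2^\pm, A_3$ introduced before Lemma \ref{lem:at_most_two}, since these record precisely which of the $\Phi_i(x)$ are "alive" (not identically $\Theta$) for a given $x$. On $A_3=[-a,a]\cap T$ the relevant density is $f_{x,2}=f_{x,4}=af(ay-x)$, supported on $y$ with $|ay-x|\le (1-3a-2b)/2$, i.e. an interval centered at $x/a$ of length $(1-3a-2b)/a$; since this interval is large and $x/a$ stays near $0$, its intersection with $T$ contains a fixed-length subinterval. On $A_1^\pm$ only $f_{x,1}$ (resp. $f_{x,3}$) survives, with support a segment of the same length $(1-3a-2b)/a$ but now centered at a point that drifts with $x$; the key point is that the condition \eqref{eq:Aeps}, $\eps < \tfrac{1-3a-2b}{4a}-c$, which by \eqref{111} is exactly $v^1<u^1$, guarantees that this drifting segment never slides off $T$ — more precisely the "bad" $x$-interval $[u^i,v^i]$ on which $E_1(x)$ would be pushed entirely into the dead zone $[-1,-1+c]\cup[1-c,1]$ is empty. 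On $A_2^\pm$ one uses whichever of the (at most two) possible descendants is available. In every region the worst case is that the live support segment is near the boundary of $T$, and I would take $\kappa$ to be the minimum, over the finitely many regions, of the guaranteed overlap length — a strictly positive quantity by the strict inequality in \eqref{eq:Aeps}.

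The routine part is the bookkeeping: for each region, write down the center and half-length of the relevant triangular support from \eqref{eq:f_density_function}, intersect with $[-1+c+\eps,1-c-\eps]$, and read off the length of the overlap as an explicit (piecewise-linear, decreasing-in-$\eps$) function of $a,b,\eps$. The main obstacle — and the only place where the hypothesis $v^1<u^1$ is used — is the analysis on $A_1^\pm$ (and the transitional $A_2^\pm$): one must check that the support of $f_{x,1}$, which for $x$ near $1-c-\eps$ is pushed toward the top edge of $T$, still pokes a definite distance into $T$ rather than disappearing into $[1-c,1]$. This is precisely the geometric meaning of $u^1>v^1$ as expressed through the fixed-point/lines picture of \eqref{122}–\eqref{111}, and it is why in the non-simple case ($u^i<v^i$) this lemma fails and $T$ must be cut further, as anticipated in Section \ref{123}. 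Once the overlap is bounded below on each of the five pieces, taking $\kappa$ to be their minimum finishes the proof.
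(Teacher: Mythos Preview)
Your plan is correct and would yield a valid proof, but the paper takes a shorter and more geometric route. Rather than partitioning $T$ into the five pieces $A_1^\pm,A_2^\pm,A_3$ and tracking which individual density $f_{x,i}$ is alive on each, the paper works directly with the stripe picture and distinguishes only two cases according to the \emph{shape} of $E_1(x)$. Either the vertical fiber $\{x\}\times T$ straddles the gap between two consecutive stripes $S_k$ and $S_{k+1}$, in which case $E_1(x)=[-1+c+\eps,\ell_{2k+1}(x))\cup(\ell_{2k}(x),1-c-\eps]$ consists of two intervals whose longer component has length at least $\kappa_1=\tfrac12\bigl(\tfrac{1-3a-2b}{2a}-2c\bigr)$; or the fiber lies within a single stripe, so $E_1(x)$ is one interval of full stripe width $\kappa_2=\tfrac1a(1-3a-2b)$ (the paper writes $\tfrac2a(1-3a-2b)$, apparently a typo). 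One then takes $\kappa=\min(\kappa_1,\kappa_2)$. The hypothesis $v^1<u^1$ together with \eqref{eq:Aeps} is used exactly once, to make $\kappa_1>0$.

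Your five-region decomposition is essentially a refinement of this dichotomy: $A_3$ and portions of $A_2^\pm$ land in the single-stripe case, while $A_1^\pm$ and the rest of $A_2^\pm$ land in the two-interval case. So you would be re-deriving the same two bounds several times over, with extra bookkeeping on the transitional regions. (One small slip: you write that \eqref{eq:Aeps} ``is exactly $v^1<u^1$''; in fact \eqref{111} gives $v^1<u^1\Leftrightarrow c<\tfrac{1-3a-2b}{4a}$, and \eqref{eq:Aeps} is the strictly stronger requirement that there be room for the $\eps$-trimming as well.) Your approach does have the merit of making explicit which $f_{x,i}$ contributes where, which foreshadows the decomposition used later in Section~\ref{UEG}; but for the present lemma the coarser two-case argument gets to the constant $\kappa$ with much less work.
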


\begin{pf}
There are two possibilities for the shape of $E_1(x)$ [defined in (\ref{113})]:
\begin{longlist}[(1)]
\item[(1)]$E_1(x)$ \textit{consists of two intervals}:
$[-1+c+\eps,\ell_{2k+1}(x))\cup(\ell_{2k}(x),1-c-\eps]$
(for\vspace*{1.5pt} $k=1$ or $k=2$). The length of one of these intervals is at least
half of $\ell_3(u^1)-(-1+c+\eps)$, that is, $\kappa_1=\frac
{1}{2}\cdot(\frac{\gap}{a}-2c) $.

\item[(2)]$E_1(x)=(\ell_{2k-1}(x),\ell_{2k}(x))$ (\textit{for some }$1\leq
k\leq3$)  \textit{is an open interval with length} $\kappa_2=\frac{4}{a}\gap$.
\end{longlist}
Summarizing these cases, define $\kappa=\min\{\kappa_1,\kappa_2\}$.
\end{pf}

\begin{lemA}\label{lemA:eigenvalue}
Let $m^\eps$ be the kernel in Lemma \ref{lemA:support1} with
type space $\T=\T(\eps),$ as in {(\ref{eq:AT})}. One can choose
$\eps>0$ which satisfies {(\ref{eq:Aeps})} such that the
largest eigenvalue of $m^\eps$ is larger than 1. From now on, we fix
such an $\varepsilon$ and call it $\varepsilon_0$.
\end{lemA}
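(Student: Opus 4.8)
The plan is to estimate the Perron–Frobenius eigenvalue $\rho = \rho(\eps)$ of the kernel $m^\eps$ from below, and to show that this estimate exceeds $1$ for suitable $\eps$. The natural tool is the variational characterization of the dominant eigenvalue of a positive integral operator: for any nonnegative test function $g$ supported on $T(\eps)$ one has
\begin{equation*}
\rho(\eps) \;\geq\; \inf_{x \in T(\eps)}\; \frac{1}{g(x)}\int_{T(\eps)} m^\eps(x,y)\, g(y)\,\di y ,
\end{equation*}
and in particular, taking $g \equiv \ind_{T(\eps)}$,
\begin{equation*}
\rho(\eps)\;\geq\; \inf_{x \in T(\eps)} \int_{T(\eps)} m^\eps(x,y)\,\di y \;=\; \inf_{x\in T(\eps)} \sum_{i=1}^4 \int_{T(\eps)} f_{x,i}(y)\,\di y .
\end{equation*}
So the first step is: reduce the claim $\rho(\eps)>1$ to showing that the total "surviving mass'' $\sum_i \pr(X_i(x)\in T(\eps))$ is strictly bigger than $1$, uniformly in $x\in T(\eps)$, at least for one admissible value of $\eps$.

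Next I would compute this surviving mass. By Lemma~\ref{lem:at_most_two} the interior of $T$ splits into the regions $A_3$, $A_2^\pm$, $A_1^\pm$, and in each region only a prescribed subset of the four descendants can be born with positive probability; combined with the explicit triangular densities $f_{\Phi_i(x)}$ from \eqref{eq:f_density_function}, the integral $\sum_i \int_{[-1,1]} f_{\Phi_i(x)}(y)\,\di y$ is a piecewise-linear function of $x$ that I can write down exactly. The key point is that on the central region $A_3$ both $\Phi_2(x)$ and $\Phi_4(x)$ contribute, and near $x=0$ each of $\Phi_2,\Phi_4$ has essentially all its mass inside $[-1,1]$, so the total mass there is close to $2$ — this is exactly where the growth condition $a>\frac14$ enters, via Larsson's observation that the expected number of offspring of the ancestor type $0$ equals (something like) $2$, hence $>1$. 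So for $x$ in a neighborhood of $0$ the surviving mass comfortably exceeds $1$; I would then note that taking $g$ to be an eigenfunction-like bump concentrated near $0$ (rather than the flat test function) lets me ignore the bad regions near $\pm(1-c)$ where the mass drops below $1$: the operator still has a large eigenvalue because mass injected near $0$ stays near $0$ and reproduces at rate close to $2$. Concretely, I would pick a closed subinterval $T_0 \subset \Int T(\eps)$ around $0$ on which the surviving mass into $T_0$ itself is $\geq 1+\eta$ for some $\eta>0$, restrict the operator to $T_0$, and invoke monotonicity of the dominant eigenvalue under enlarging the kernel/domain (Harris, or simply Jentzsch/Krein–Rutman positivity) to conclude $\rho(\eps)\geq \rho_{T_0} \geq 1+\eta > 1$.

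The final step is the choice of $\eps$. The quantities $c=\frac{2b}{1-a}$, the stripe geometry, and the surviving-mass function all depend continuously on $\eps$ through the truncation $T(\eps)=[-1+c+\eps,1-c-\eps]$, and at $\eps=0$ (the untruncated admissible case, which is available precisely because we are assuming \eqref{111}, i.e.\ $v^1<u^1$) the surviving mass near $0$ is bounded away from $1$ from above. Hence by continuity there is $\eps_0>0$, still satisfying \eqref{eq:Aeps}, for which the same strict inequality persists; fix that $\eps_0$. The main obstacle I anticipate is not any single estimate but making the "localization near $0$'' rigorous: one must be careful that shrinking to $T_0$ does not destroy the uniform positivity \eqref{cond:dens} needed to even speak of a dominant eigenvalue, and that the monotonicity comparison is applied to genuinely comparable operators; a clean way around this is to use the $n_0$-fold iterated kernel $m_{n_0}$ (which is uniformly positive and bounded on $T\times T$ by \eqref{cond:dens}) and the identity $\rho(m_{n_0})=\rho(m)^{n_0}$, reducing everything to a single honest positive bounded kernel to which the Rayleigh-quotient lower bound applies without fuss.
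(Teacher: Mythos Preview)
Your variational approach has a genuine gap: the localization to a subinterval $T_0$ around $0$ cannot work. For $x\in T_0\subset A_3$ only $\Phi_2,\Phi_4$ contribute, and each has density $y\mapsto af(ay-x)$, centred at $y=x/a$; since $1/a>1$ the offspring distribution is pushed \emph{outward}. Concretely, at the right endpoint $x=K$ of $T_0=[-K,K]$ the mass surviving into $T_0$ is
\[
2\int_{-K}^{K} af(ay-K)\,\di y \;=\; 2\int_{-(1+a)K}^{-(1-a)K} f(z)\,\di z,
\]
an integral of the symmetric triangular density $f$ over an interval lying strictly to the left of $0$, hence strictly less than $2\cdot\tfrac12=1$. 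So no interval $T_0$ about $0$ has uniform surviving mass exceeding $1$, and neither monotonicity of $\rho$ under domain enlargement nor passing to the iterated kernel $m_{n_0}$ rescues this, because the same expansion phenomenon persists for every iterate. (If $T_0$ reaches into $A_2^\pm$ or $A_1^\pm$ the situation is worse: there some $x$ produce at most one offspring, with mass $\le 1$.)

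The paper's argument avoids this by working on the \emph{adjoint} side. While the row integrals $\int_{T} m(x,y)\,\di y$ are not bounded below by a constant larger than $1$ (they drop to at most $1$ for $x$ near $\partial T$, exactly the obstruction you met), the column integrals $\int_{T(0)} m(x,y)\,\di x$ are \emph{constant}, equal to $4a$, for every $y\in T(0)$: each of the four shifted densities $x\mapsto af(ay-x-c_i)$ has its $x$-support contained in $T(0)$ and hence integrates to exactly $a$. Thus $\ind_{T(0)}$ is an exact \emph{left} eigenfunction of $m^0$ with eigenvalue $4a>1$ --- and this, not the local offspring count near $0$, is where the hypothesis $a>\tfrac14$ enters. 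An $L^2$-perturbation of the kernel then yields $\rho(\eps)>1$ for all sufficiently small admissible $\eps>0$.
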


\begin{pf} Let $T(0):=[-1+c,1-c]$, with corresponding kernel $m^0$.
Define [as in (\ref{10})] the operator $\mathcal{T}_\eps$ for all
$\eps\ge0$ by
\[
\mathcal{T}_\eps h(y)=\int_{\mathbb{R}} h(x) m^\eps(x,y)\,\di x
\]
for functions with $\operatorname{supp}(h)\subset{\T}(\eps)$.

We shall prove that $4a$ is an eigenvalue of the operator $\mathcal
{T}_0$ with eigenfunction $h(x) =\ind_{T(0)}(x)$:
\begin{eqnarray*}
\mathcal{T}_0 h(y)&=&\int_{\mathbb{R}}h(x)m^0(x,y)\,\di x\\
&=&\int_{\mathbb{R}}h(x)\Biggl(\sum_{i=1}^4f_{x,i}(y)\Biggr)\ind_{T(0)}(y)\,\di x\\
&=&4ah(y)\int_{T(0)}\sum_{i=1}^4f_{\triangle}(ay+c_i-x)\,\di x\\
&=&4ah(y),
\end{eqnarray*}
provided we show that for all $i=1,2,3,4,$
\[
\int_{[-1+c,1-c]}f_{\triangle}(ay+c_i-x)\,\di x=1.
\]
Since $f_{\triangle}$ is a probability density with support lying in
$[-\gap,\gap]$, it then suffices to show that
for all $y\in[-1+c,1-c]$ and for $i=1,2,3,4$, we have
\[
ay+c_i-1+c\le-\gap\quad\mbox{and}\quad ay+c_i+1-c\ge\gap.
\]
Taking the worst case for $y,$ this boils down to showing
\[
a(1-c)+c_i-1+c\le-\gap\quad\mbox{and}\quad a(-1+c)+c_i+1-c\ge\gap.
\]
For $i=1,$ we have $c_1=(a+1)/2-b$, so there we have to check that
\[
(1-c)(a-1)+\frac{a+1}{2}-b\le-\gap\quad\mbox{and}\quad(1-c)(1-a)+\frac{a+1}{2}-b\ge\gap.
\]
First, note that since $c_3=-c_1$, the case $i=3$ is covered by the
case $i=1$.
Further, note that the left inequality implies the right one since
$a+1> 2b$ always holds.
Moreover, $a+1> 2b$ also gives that the left inequality will imply both
inequalities for $i=2,4$.
The calculation is then completed by substituting $c=2b/(1-a)$ in the
left inequality, which turns out to be an equality.

The conclusion of the lemma follows from a simple fact noted by Larsson
\cite{Larsson}: if the two kernels $m^0$ and $m^\eps$ are close to
each other in
$L^2$-sense, then the eigenvalues of the operators $\mathcal{T}_0$ and
$\mathcal{T}_\eps$ are close to each other.
\end{pf}

\begin{lemA}\label{lemA:support2}
Let $\T$ be as in Lemma \ref{lemA:eigenvalue}.
Then there exists an index $n$ such that for all $x\in\T$, $\{
y\dvtx m_n(x,y)>0\}=\T$.
\end{lemA}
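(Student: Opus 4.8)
The plan is to show that the positivity set of the iterated kernel stabilizes to all of $T\times T$ after finitely many steps, by exploiting the expanding dynamics of the boundary maps $\ell_j$ together with the uniform thickness of $E_1(x)$ proved in Lemma~\ref{lemA:support1}. First I would recall that $m_{n+1}(x,y)=\int_T m_n(x,z)m_1(z,y)\di z$, so that $y\in\{y:m_{n+1}(x,y)>0\}$ precisely when there is some $z\in\{z:m_n(x,z)>0\}$ with $m_1(z,y)>0$, i.e., with $(z,y)$ in the support of the kernel. Hence if we define $E_n(x):=\{y:m_n(x,y)>0\}$, then $E_{n+1}(x)=\bigcup_{z\in E_n(x)} E_1(z)$. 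So the statement to prove is: for every $x\in T$ there is an $n$ (uniform in $x$, by a compactness argument at the end) with $E_n(x)=T$.

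The key dynamical input is that each branch $z\mapsto \Phi_i(z)$ has slope $1/a>1$ (recall $a>1/4$, but more to the point $1/a>1$ since $3a+2b<1$ forces $a<1/3$), so the maps governing the endpoints of the component intervals of $E_1(z)$ are expanding by a factor $1/a$. Concretely, from Lemma~\ref{lemA:support1}, $E_1(z)$ always contains an interval of length $\kappa>0$, and as $z$ ranges over a subinterval $J\subset T$ the corresponding family of these $\kappa$-intervals sweeps out an interval of length at least $|J|/a+\kappa$ inside $T$ (until it hits the boundary of $T$, where it gets truncated). Iterating: starting from any nondegenerate seed interval $E_1(x)$ of length $\ge\kappa$ inside $T$, the sets $E_n(x)$ contain intervals whose lengths grow at least geometrically with ratio $1/a$, so after $n_1$ steps with $a^{-n_1}\kappa \ge \mathrm{diam}(T)$ one such interval would cover $T$ — provided we can also argue the sweeping stays "connected" and reaches both ends of $T$. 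This last point is where the interval structure of $T$ (property \textbf{A}) and the explicit geometry of the stripes $S_1,S_2,S_3$ and lines $\ell_1,\dots,\ell_6$ must be used: one checks that the union of the stripes, intersected with $T\times T$, projects onto all of $T$ in the second coordinate for every first coordinate, and that the fixed points $-1+c,1-c$ of $\ell_1,\ell_6$ being repelling (from \eqref{110}) pushes the swept intervals outward toward the endpoints of $T$ rather than letting them collapse.

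More carefully, I would split $T$ according to the five regions $A_1^\pm, A_2^\pm, A_3$ from Lemma~\ref{lem:at_most_two} and track, for a seed point $x$, which component of $E_1(x)$ to follow: by choosing at each step a $z$ in the already-constructed interval that lies in a region whose image under the appropriate $\Phi_i$ moves toward (and eventually past) a target endpoint of $T$, one produces in a bounded number of steps an interval of $E_n(x)$ containing a fixed neighborhood of $-1+c+\varepsilon_0$ and, symmetrically (or in a few more steps), one containing a neighborhood of $1-c-\varepsilon_0$; then continued expansion by $1/a$ merges everything and fills $T$. Finally, since all the relevant bounds ($\kappa$, the expansion factor $1/a$, the region geometry) are uniform in $x\in T$ and $T$ is compact, the number of iterations needed is bounded by some $n$ independent of $x$, giving the claimed uniform $n$.

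\textbf{Main obstacle.} The routine part is the geometric-growth bookkeeping; the genuine difficulty is controlling the \emph{holes}: $E_1(z)$ is in general a union of two intervals with a gap, and one must ensure that as $z$ moves, the swept region does not get trapped inside such a gap or fail to reach an endpoint of $T$. This is exactly why $\varepsilon_0$ had to be chosen (via \eqref{111}, $v^1<u^1$) so that the bad interval $[u^i,v^i]$ — on which the process would die — is absent; in the simple case this is automatic, and I expect the proof here to be short, whereas the analogous Lemma~\ref{lem:support2} in the general case is flagged as more technical precisely because these holes genuinely obstruct the argument and require the more elaborate type space.
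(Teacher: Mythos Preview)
Your overall strategy matches the paper's: define $E_n(x)=\{y:m_n(x,y)>0\}$, derive the recursion $E_{n+1}(x)=\bigcup_{z\in E_n(x)}E_1(z)$ from $m_{n+1}(x,y)=\int_T m_n(x,z)m_1(z,y)\di z$, and use the expansion factor $1/a>1$ to get geometric growth of a seed interval of length $\ge\kappa$ until it fills $T$. So the core idea is right.

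Where you diverge is in the case analysis. You propose tracking the five regions $A_1^\pm,A_2^\pm,A_3$ from Lemma~\ref{lem:at_most_two} and steering toward each endpoint of $T$ separately. The paper instead recycles the two-case split already established in Lemma~\ref{lemA:support1}: either (1) $E_1(x)$ is two intervals, one of which has length $\ge\kappa_1$ \emph{and already contains an endpoint $\pm(1-c-\varepsilon_0)$ of $T$}; or (2) $E_1(x)$ is a single interval of length $\kappa_2$. The key observation you are missing is that in case (1) the endpoint of $T$ is retained under iteration (if $E_n(x)\ni -1+c+\varepsilon_0$ then so does $E_{n+1}(x)$), so the growing interval is anchored at one end and simply stretches by the factor $1/a$ until it equals $T$, with an explicit step count $n_1^*=\lceil\log_{1/a}(2(1-c-\varepsilon_0)/\kappa_1)\rceil$. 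Case (2) reduces to case (1) in at most $n_2^*$ further steps by the same expansion. This anchoring-at-an-endpoint observation is exactly what dissolves your ``main obstacle'' about holes and connectedness in the simple case: there is nothing to steer, the interval is already pinned to $\partial T$. Your five-region approach would ultimately work, but it is more laborious and obscures this simplification.
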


Since the function $m_n(\cdot,\cdot)$ is piecewise continuous on the
compact set $T$, Lemma \ref{lemA:support2} implies that there exists
an $a_{\min}>0$ such that $m(x,y)\geq a_{\min}$ for any $x,y\in T$.
Further, using the fact that $m(x,\cdot)$ is bounded, we immediately
obtain that $a_{\max}:=\sup_{x\in T}\ev_x \mathcal{Z}^2_1(T)$ is
finite. Therefore, we have the following result.
\begin{corollary}
Let $\T$ be as in Lemma \ref{lemA:eigenvalue}. The branching process
$\mathcal{Z}$ with type space $T$ satisfies conditions (\ref{cond:dens}) and (\ref{cond:secmom}).
\end{corollary}

\begin{pf*}{Proof of Lemma \ref{lemA:support2}}
Basically, we will prove that if (\ref{condA:bigger_kappa}) holds,
then Lemma \ref{lemA:support2} also holds since the slope of the lines
$\ell_i$ is equal to $\frac{1}{a},$ which is bigger than one.
Let $E_n(x)=\{y\dvtx m_n(x,y)>0\}$.
We will prove that in both cases of the proof of Lemma \ref
{lemA:support1}, the sequence $(E_n(x))$ reaches the whole type space
in a finite number of steps, uniformly in $n$ and $x\in\T$.

We can derive $E_{n+1}(x)$ from $E_n(x)$ by means of the equation
\[
m_{n+1}(x,y)=\int_{\T}m_n(x,z)m_1(z,y)\,\di z,
\]
which implies that
\begin{equation}\label{eq:E_{n+1}_decompA}
E_{n+1}(x)=\bigcup_{y\in E_n(x)}E_1(y).
\end{equation}
In the proof of Lemma \ref{lemA:support1}, we treated two separate
cases. We continue this proof according to those two cases:
\begin{longlist}[(1)]
\item[(1)]\textit{$E_1(x)$ consists of two intervals.} Take the longer one,
so its length is at least $\kappa_1=\frac{1}{2}\cdot(\frac
{\gap}{4a}-2c)$. The following two facts hold. This interval
contains either $-1+c+\eps$ or $1-c-\eps,$ and if $E_n(x)$ contains
one of these points, then $E_{n+1}(x)$ also contains the same point
because of (\ref{eq:E_{n+1}_decompA}). Therefore, if $E_n(x)\neq\T$
and is of the form, for example, $[-1+c+\eps,-1+c+\eps+s)$ for some
positive $s,$ then $E_{n+1}(x)\supset[-1+c+\eps,-1+c+\eps
+\frac{1}{a}s)$ or $E_{n+1}(x)=\T$. Hence, if
$E_1(x)=[-1+c+\eps,-1+c+\eps+s),$ then in
\[\label{eq:n1b}
n_1(x)=\biggl\lceil\log_{1/a}\biggl(\frac{2(1-c-\eps)}{s}\biggr)\biggr\rceil
\]
steps, $E_n(x)$ reaches $\T$, that is, $E_{n_1(x)}(x)=\T$. $s\geq
\kappa_1$ implies that $n_1(x)\leq\lceil\log_{1/a}(\frac{2(1-c-\eps)}{\kappa_1} )\rceil=n_1^*$.

\item[(2)]\textit{$E_1(x)=(\ell_{2k-1}(x),\ell_{2k}(x))$ \textup{(}for some $1\leq
k\leq3$\textup{)} is an open interval with length} $\kappa_2=\frac{4}{a}\gap
$. If, for some $n,$ $E_n(x)$ does not contain either $-1+c+\eps$ or
$1-c-\eps,$ then we have three possibilities for $E_{n+1}(x)$: (i) it
does not contain any of these two points; (ii) it contains one of
them; (iii) it equals $\T$. In case (iii) we obtained what we wanted. In case (i), the length of $E_{n+1}(x)$
equals $\frac{1}{a}|E_n(x)|+\frac{2\gap}{a}$; in case (ii), we have
$E_{n+n_1^*}(x)=\T$ by (1) above, so we estimate the number of
necessary iterations from below if we suppose that case (i) happens in
each step then case (ii) in $n_1^*$ number of steps. As in (\ref{P}), we have a uniform bound for the number of iterations
in (\ref{16}): $n_2^*=\lceil\log
_{1/a}(\frac{2(1-c-\eps)}{\kappa_2} )
\rceil$. Therefore, in this case, we have $E_{n_1^*+n_2^*}(x)=\T$ for
any $x$.
\end{longlist}
Summarizing these considerations, one obtains that for $n\geq
n_1^*+n_2^*$, one has $E_n(x)=\T$.
\end{pf*}

%s8 ###
\section{A uniformly positive kernel in the general case}\label{sec:up_kernel}

The construction of $T$ consists of two steps. We will call any open
subset of $[-1,1]$ a \textit{pre-type space}. First, we inductively
construct a sequence of pre-type spaces $T^0\supset T^1\supset\cdots
\supset T^l$ and prove that $T^r$, $r=0,\dots,l,$ consists of $3^r$
disjoint open intervals of equal length. Those elements of $T^l$ which
are ``far'' from the endpoints of the components of $T^l$ satisfy (\ref
{cond:bigger_kappa}). Unfortunately, the same does not hold for the
points close to the the boundary of the components of $T^l$. So, as a
second step of the construction of $T,$ we remove a small neighborhood
of the boundary of $T^l$ from $T^l$.

\begin{lemma}\label{lem:support1}
There exists a restriction of the pre-type space $(-1+c,1-c)$ to a
closed set $\T$ such that the kernel $m$ of the branching process
$\mathcal{Z}$ with type space $\T$ satisfies
\begin{equation}\label{cond:bigger_kappa}
\qquad\exists\kappa>0 \mbox{ such that } \forall x\in T,\mbox{ the set } E_1(x)\mbox{ contains an interval of length }\kappa.
\end{equation}
%
%
%f4 ###
%

%
Further,
$\T$ consists of $3^l$ disjoint
closed intervals of equal length for some $l\in\mathbb{N}$. Moreover,
$0$ is contained in the interior of $\T$.
\end{lemma}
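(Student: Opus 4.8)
The plan is to construct the pre-type spaces $T^0\supset T^1\supset\cdots\supset T^l$ inductively, mimicking the self-similar structure of the support of the kernel $m$ inside the three slanted stripes $S_1,S_2,S_3$. Start with $T^0=(-1+c,1-c)$, a single open interval. The key geometric observation is that for $x\in T^0$ the line $e(x)$ can produce descendants only of types $\Phi_i(x)$ lying in the stripes, and that the ``bad'' sub-intervals $[u^i,v^i]$ of \eqref{122} (where $E_1(x)$ falls entirely into $[-1,-1+c]\cup[1-c,1]$ and the process dies) must be removed. After removing the finitely many bad sub-intervals from $T^0$ we are left with a finite union of open intervals; the claim to verify (by the explicit formulas \eqref{eq:f_density_function} for $f_{\Phi_i(x)}$ and the line equations \eqref{eq:def_ell_1,2,3,4,5,6}) is that exactly $3$ intervals of equal length survive, giving $T^1$. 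Then iterate: for each of the $3$ components of $T^1$, the portion of the kernel's support over that component is again, after rescaling by the factor $a^{-1}$ coming from the slope of the $\ell_j$, a scaled copy of the stripe picture, so removing the bad sub-intervals splits each component into $3$ equal sub-intervals, yielding $T^2$ with $3^2$ components, and so on. One continues this for $l$ steps, where $l$ is chosen large enough (a quantitative condition comparing the interval lengths $\sim C a^{-l}\cdot(\text{width})$ to the gap structure, to be pinned down in the calculation) that for $x$ in any component of $T^l$ the set $E_1(x)$ is forced to contain a full interval of definite length; set $\widetilde T$ to be the union of the $3^l$ closed intervals obtained by taking closures.

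The main steps, in order, are: (i) write down explicitly, from \eqref{eq:f_density_function} and \eqref{eq:def_ell_1,2,3,4,5,6}, the shape of $E_1(x)=\{y:m(x,y)>0\}$ as $x$ ranges over a single stripe-component, distinguishing the cases of Lemma \ref{lemA:support1} (two boundary intervals vs.\ one interior interval) together with the degenerate ``dies out'' case on $[u^i,v^i]$; (ii) verify the self-similar splitting: removing the dies-out sub-intervals from an interval-component of $T^r$ leaves exactly $3$ equal-length components, using that the three stripes $S_1,S_2,S_3$ have equal horizontal width $\tfrac1a(1-3a-2b)$ by \eqref{eq:def_ell_1,2,3,4,5,6} and that the map $x\mapsto \ell_j(x)$ has slope $\tfrac1a$; (iii) choose $l$ so that the common length of the $3^l$ components of $T^l$ is small enough that, except in a controlled neighborhood of the component endpoints, $E_1(x)$ contains an interval of length $\kappa$ for a uniform $\kappa>0$ — the value of $\kappa$ being essentially $\min\{\kappa_1,\kappa_2\}$ from Lemma \ref{lemA:support1} rescaled by $a^{l}$, or rather a fixed fraction thereof; (iv) since $l$ is finite and $0$ lies strictly inside one of the surviving components (this must be checked: $0$ is never in any bad sub-interval $[u^i,v^i]$, which follows because $E_1(0)$ contains a neighborhood of $0$ by the explicit densities at $x=0$), conclude that $0\in\operatorname{int}\widetilde T$; and (v) take $\widetilde T=\overline{T^l}$ with closed components and note that \eqref{cond:bigger_kappa} was arranged at step (iii).

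I expect the main obstacle to be step (iii), namely uniformly controlling $E_1(x)$ for $x$ near the endpoints of the components of $T^l$: precisely at those points one of the stripe intervals may shrink to zero length, so \eqref{cond:bigger_kappa} fails, which is exactly why the lemma only produces a pre-type space and the final trimming of a small boundary neighborhood (announced as the ``second step'' in the preamble to the lemma and carried out in the next lemma) is deferred. Thus in the present lemma the honest task is to show that away from that boundary neighborhood a uniform $\kappa$ exists, and to track carefully that the splitting into $3$ equal intervals at each stage is exact — this exactness relies on the algebraic identities $u^1-v^1=u^2-v^2$ and $c=\tfrac{2b}{1-a}$ noted before \eqref{111}, and on the equal widths of the stripes, so the bookkeeping of which of the six lines $\ell_1,\dots,\ell_6$ bounds which piece at each scale is the delicate part. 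A secondary obstacle is verifying that the process of removing bad intervals does not, at some stage $r<l$, merge or misalign components; this is handled by the induction hypothesis that $T^r$ consists of $3^r$ equal-length intervals, which makes the stripe picture over each component literally a rescaled copy of the picture over $T^0$.
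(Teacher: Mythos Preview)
Your overall architecture---iteratively excising the ``dies-out'' sub-intervals to obtain a nested sequence $T^0\supset T^1\supset\cdots\supset T^l$, each $T^r$ consisting of $3^r$ equal-length open intervals---is exactly the paper's construction. Two genuine gaps remain, however.

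\medskip

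\textbf{The stopping level $l$ is not a free parameter.} In the paper the lengths $\rho_r$ of the removed sub-intervals satisfy the affine recursion $\rho_{r+1}=a\rho_r-(1-3a-2b)$, and $l$ is \emph{determined} as the last level with $\rho_l>0$: once $\rho_{l+1}<0$ the ``bad'' intervals $[u^{i_1\dots i_{l+1}},v^{i_1\dots i_{l+1}}]$ are empty and your step~(ii) cannot be iterated further. Your proposal to ``choose $l$ large enough'' so that the component length (which you write as $\sim C a^{-l}$) becomes small is not the mechanism; there is no self-similar scaling of the components by powers of $a$, and making components smaller is neither possible past level $l$ nor what secures \eqref{cond:bigger_kappa}.

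\medskip

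\textbf{The $\varepsilon$-trimming is part of \emph{this} lemma, not the next one.} You set $\widetilde T=\overline{T^l}$ and defer the boundary trimming, but the lemma asserts \eqref{cond:bigger_kappa} for every $x\in T$, and---as you yourself observe---at the endpoints $\alpha_i,\beta_i$ of the components of $T^l$ the stripe interval may collapse, so $\overline{T^l}$ does not satisfy the conclusion. (The ``next lemma'' in the paper is about the Perron--Frobenius eigenvalue, not about trimming.) The paper's proof instead takes
\[
T=T(\varepsilon)=\bigcup_{i=1}^{3^l}[\alpha_i+\varepsilon,\beta_i-\varepsilon],
\qquad
0<\varepsilon<\min\Bigl\{-\tfrac{\rho_{l+1}}{2a},\ \tfrac{1-3a-2b}{2}\Bigr\},
\]
and then verifies \eqref{cond:bigger_kappa} on $T(\varepsilon)$ via a three-case geometric analysis of how the vertical segment $\{x\}\times(\ell_{2k}(x),\ell_{2k-1}(x))$ sits relative to the level-$l$ and level-$(l-1)$ component rectangles; each case produces an explicit positive lower bound for the length of an interval in $E_1(x)$, and $\kappa$ is the minimum of these. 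That case analysis---not a rescaled $\min\{\kappa_1,\kappa_2\}$ from the simple case---is the content that delivers a uniform $\kappa>0$, and it is absent from your plan. Your step~(v), which claims \eqref{cond:bigger_kappa} ``was arranged at step~(iii)'', contradicts your own step~(iii), which explicitly excludes a neighbourhood of the endpoints.
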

\begin{pf}
We recall that $u^1,v^1$ were defined in (\ref{122}) and we take the
pre-type space ${\T}^0:=(-1+c,1-c)$.
If $v^k<u^k$, then we define $l:=0$ and the proof
of (\ref{cond:bigger_kappa}) was achieved in Lemma \ref{lemA:support1}.
So, we can assume that $u^k\leq v^k$, $k=1,2$.
To ensure that (\ref{cond:bigger_kappa}) holds, we need to remove the
intervals $[u^1,v^1]$ and $[u^{2},v^{2}]$ from the
pre-type space $T^0$ (see Figure \ref{fig:kernel1}). So, we restrict
ourselves to the next pre-type space: ${\T}^1={\T}^0\setminus\{
[u^1,v^1] \cup[u^2,v^2] \}$.
The size of each of the intervals removed is
$\varrho_1:=v^1-u^1=v^2-u^2 $.
We define the second generation endpoints $u^{i_1k}$ and $v^{i_1k}$ as follows:
\[
u^{i_1k}=\pi_1(\{y=u^{i_1} \}\cap\ell_{2k})\quad\mbox{and}\quad v^{i_1k}=\pi_1(\{y=v^{i_1} \}\cap\ell_{2k-1}),
\]
where $i_1=1,2$ and $k=1,2,3$; see Figure \ref{fig:kernel1}. If
$v^{i_1k}<u^{i_1k}$, then we define $l:=1$. Otherwise,
we continue defining the sets
${\T}^r$ and the endpoints of the subtracted intervals $v^{i_1\dots
i_r}$ and $u^{i_1\dots i_r}$ ($i_1=1,2$, $i_2,\dots,i_r=1,2,3$) as
follows: assuming that $u^{i_1\dots i_{r-1}}\leq v^{i_1\dots i_{r-1}}$,
we define the level-$r$ endpoints as
\begin{eqnarray}\label{128}
u^{i_1\dots i_{r-1}k}&=&\pi_1(\{y=u^{i_1\dots i_{r-1}}
\}\cap\ell_{2k})\quad\mbox{and}\nonumber\\ [-8pt]\\ [-8pt]
v^{i_1\dots i_{r-1}k}&=&\pi_1(\{y=v^{i_1\dots i_{r-1}}
\}\cap\ell_{2k-1})\nonumber
\end{eqnarray}
for $i_1=1,2$ and $i_2,\dots,i_{r-1},k=1,2,3$. Put
\begin{equation}\label{300}
{\T}_{r}={\T}_{r-1}\setminus\{[u^{i_1i_2\dots
i_{r}},v^{i_1i_2\dots i_{r}} ], i_1=1,2,i_2,\dots,i_{r}=1,2,3
\}.
\end{equation}
The size of each of the intervals removed is
$\varrho_{r}:=v^{i_1i_2\dots i_{r}}-u^{i_1i_2\dots i_{r}}$.
Using $\ell_{2k}(x)-\ell_{2k-1}(x)=2\gap/a$ (see also the left-hand
side of Figure \ref{fig:rho_recursion}),\vadjust{\goodbreak} one can easily check that
\begin{equation}\label{124}
\forall r\geq1,\qquad\rho_{r+1}=a\rho_{r}-2\gap\quad\mbox{and}\quad\rho_1=v^1-u^1.
\end{equation}
Consider the smallest $r\geq1$ for which
$v^{i_1\dots i_{r+1}}<u^{i_1\dots i_{r+1}}$ or, equivalently, \mbox{$\rho_{r+1}<0$}. We then set $l=r$ and the recursion ends. The fact that $l$
is finite is immediate from (\ref{124}).

\begin{figure}

\includegraphics{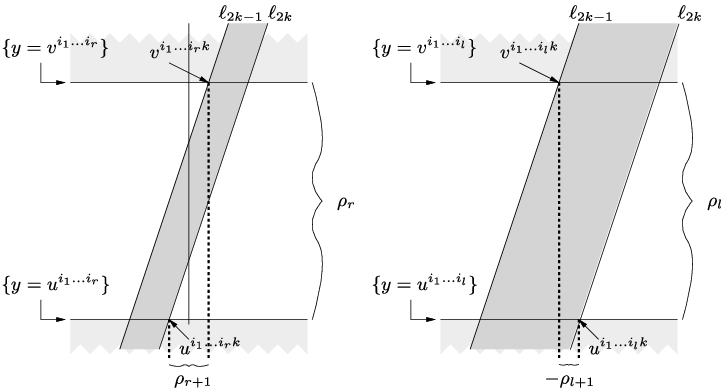}

\caption{The recursion of $\{\rho_r\}_r$. On
the left-hand side, $r\leq l-1$.}\label{fig:rho_recursion}
\end{figure}

We can represent $T^{l-1}$ and $T^{l}$ as follows:
\[
T^{l-1}=\bigcup_{j=1}^{3^{l-1}}(\gamma _j,\delta _j),\qquad T^l=\bigcup_{i=1}^{3^l}(\alpha_i,\theta_i).\label{125}
\]
Using (\ref{128}),
it follows from elementary geometry (see Figure \ref{fig:rho_recursion}) that
\begin{eqnarray}\label{129}
\forall i, \exists j,\exists k\mbox{:}\qquad\alpha_i &=&\pi_1\bigl(\{(x,y)\dvtx
y=\gamma_j\}\cap\ell_{2k-1}\bigr),\nonumber\\ [-8pt]\\ [-8pt]
\theta _i&=&\pi_1\bigl(\{(x,y)\dvtx y=\delta
_j\}\cap\ell_{2k}\bigr).\nonumber
\end{eqnarray}

We need further restrictions because condition (\ref{cond:bigger_kappa})
is not satisfied around the endpoints
$\alpha_i,\beta_i$. Therefore, we remove sufficiently small intervals
from both ends of each of the $3^l$ intervals of $T^l$. Namely, we
define the type space of the process by
\begin{equation}\label{eq:truncation}
T(\varepsilon):=\bigcup_{i=1}^{3^l}[\alpha_i+\varepsilon,\beta
_i-\varepsilon],
\end{equation}
where
\begin{equation}\label{127}
0<\varepsilon<\frac{\gap}{a} - \frac{1}{2}\rho_l.
\end{equation}

This bound will be used in part (c) at the end of this proof. For any
$j\in\{1,\dots,3^{l-1}\},$
we can find $i'\in\{1,\dots,3^l\}$
such that
\begin{equation}\label{130}
[ \gamma_j+\varepsilon,\delta_j-\varepsilon]=
\bigcup_{m=0}^{2}
[\alpha_{i'+m}+\varepsilon,\beta_{i' +m}-\varepsilon]
\cup \bigcup_{h=1}^{2} R^{(j)}_h,
\end{equation}
where $R^{(j)}_h$, $h=1,2,$ are intervals of length $\rho
_{l}+2\varepsilon$; see Figure \ref{fig:rho_recursion}.
Further, for every $1\leq i\leq3^l,1\leq j\leq3^{l-1},$ the set
$(\alpha_i+\varepsilon,\beta_i-\varepsilon)\times(\gamma
_j+\varepsilon,\delta _j-\varepsilon)\cap T(\eps)\times T(\eps)$
consists of three congruent squares aligned on top of each other,
of side-length
\[
s:=\beta_i-\alpha_i-2\varepsilon.
\]
The distance between two neighboring squares is $\rho_l+2\eps$.
%
%f5 ###

We now prove that (\ref{cond:bigger_kappa}) holds. That is, we want to
estimate the length of the longest interval in $E_1(x)$ from below. The
argument uses only elementary geometry.

For any $x\in T(\varepsilon),$ there is a unique $k\in\{
1,2,3\}$ such that $E_1(x)\subseteq(\ell_{2k}(x),\break\ell
_{2k-1}(x))$ holds.
Using (\ref{eq:def_ell_1,2,3,4,5,6}), one can immediately see that
the length of the interval $(\ell_{2k}(x),\ell_{2k-1}(x))$ is $\frac
{2\gap}{a}$. Geometrically, this means that the vertical line through
$x$ intersects the stripe $S_k$ in a (vertical) interval of length
$\frac{2\gap}{a}$.

Since there are many holes in $T(\varepsilon)$, for some $x\in
T(\varepsilon)$, the set $E_1(x)$ consists of at most three
subintervals of $(\ell_{2k}(x),\ell_{2k-1}(x))$; see Figure \ref{fig:rho_recursion}.
We prove that the maximum length of these intervals is uniformly
bounded away from zero.

Fix
a component $[\alpha_i+\eps,\beta_i-\eps]\subset T(\varepsilon)$
and let $x\in[\alpha_i+\eps,\beta_i-\eps]$. For this~$i,$ we
choose $j$ and $k$
according to the formula (\ref{129}).
We now distinguish three possibilities for $x\in T(\eps)$:
\begin{longlist}
\item[(a)] first we assume that the intersection of the vertical line
through $x$ with the stripe $S_k$
is not contained in the rectangle
$[\alpha_i+\varepsilon,\beta_i-\varepsilon] \times
[\gamma_j+\varepsilon,\delta_j-\varepsilon]$ [see
Figure \ref{fig:rho_recursion}], then, using the fact that the slope of the
lines $\ell_m$, $m=1,\dots,6,$ is $1/a>3,$ by elementary geometry, we
obtain that the set
$E_1(x)$ contains an interval of length $\kappa:=\frac{1}{a}\eps
-\eps>2\eps>0$
(see Figure \ref{fig:rho_recursion}B);

\item[(b)] next, we assume that there exists $m\in\{0,1,2
\}$ such that the intersection of the vertical line through $x$ with
the stripe $S_k$
is contained in the square $[\alpha_i+\varepsilon,\beta
_i-\varepsilon]\times[\alpha_{i'+m}+\varepsilon,\beta
_{i' +m}-\varepsilon]$, where $i'$ is defined as in (\ref
{130})---in this case, the set $E_1(x)=(\ell_{2k}(x),\ell_{2k-1}(x))$
and then the assertion holds with the choice of $\kappa:=\frac{2\gap
}{a}>0$ [see (\ref{127})];

\item[(c)] finally, we assume that the intersection of the vertical
line through $x$ with the stripe $S_k$ has a nonempty intersection with
one of the rectangles $[\alpha_i+\varepsilon,\beta
_i-\varepsilon]\times R^{(j)}_h$, $h=1,2$---in this case, by
elementary\vadjust{\goodbreak} geometry (see Figure \ref{fig:rho_recursion}A), $E_1(x)$ contains an
interval of length at least
\begin{eqnarray*}
\kappa &:=&\min\biggl\{s,\frac{1}{2}\cdot\bigl(\ell_{2k-1}(x)-\ell
_{2k}(x)\bigr)-(\rho_{l}+2\varepsilon)\biggr\}\\
 &=&\min\biggl\{s,\frac{1}{2}\biggl(\frac{2\gap}{a}-(\rho_{l}+2\varepsilon)\biggr)\biggr\}.
\end{eqnarray*}
\end{longlist}
It follows from (\ref{127}) that $\kappa>0$.
\end{pf}

\begin{figure}

\includegraphics{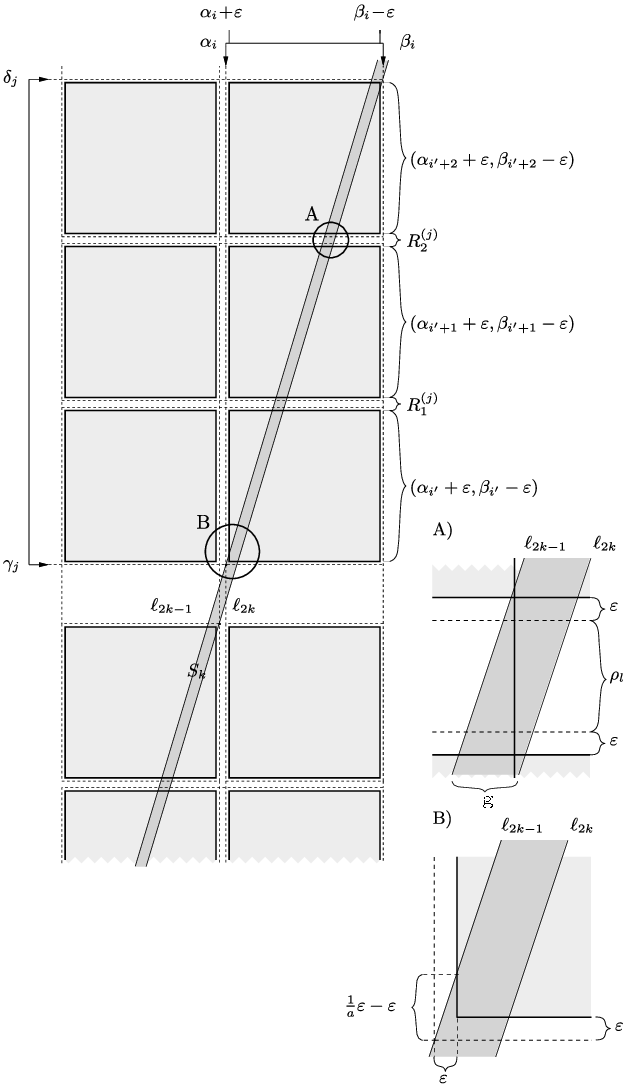}

\caption{Stripe $S_k$ and level-$l$ squares.}\label{fig:130}
\end{figure}

We will now deal with the problem of still having a kernel with
largest eigenvalue larger than 1.

\begin{lemma}\label{lem:eigenvalue}
Let $m^\eps$ be the kernel in Lemma \ref{lem:support1} with
type space $\T=\T(\eps)$. One can choose $\eps$ so small that the
largest eigenvalue of $m^\eps$ is larger than~1.
\end{lemma}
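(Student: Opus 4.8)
The plan is to mimic exactly the argument used in Lemma~\ref{lemA:eigenvalue}, since the only change is that the type space $T(\eps)$ now consists of $3^l$ intervals rather than a single one. First I would introduce the family of operators
\begin{equation*}
\mathcal{T}_\eps f(x)=\int_{\mathbb{R}}m^\eps(x,y)f(y)\di y,\qquad \eps\ge 0,
\end{equation*}
acting on functions supported in $T(\eps)$, where $T(0):=T^l=\bigcup_{i=1}^{3^l}(\alpha_i,\beta_i)$ is the pre-type space before the final $\eps$-truncation, with associated kernel $m^0$. The key computation is to identify the leading eigenvalue of $\mathcal{T}_0$ explicitly. As in the simple case, for every $x\in T^l$ the slice $E_1(x)$ equals a full translate $(\ell_{2k(x)}(x),\ell_{2k(x)-1}(x))$ of a fixed interval, intersected with $T^l$; I would check (this is where the geometry of $T^l$ from Lemma~\ref{lem:support1} and the recursion (\ref{124}) for $\rho_r$ is used) that the slice always lands inside $T^l$ without being clipped, so that the indicator $\nu(x)=\ind_{T^l}(x)$ satisfies
\begin{equation*}
\mathcal{T}_0\nu(y)=\int_{\mathbb{R}}\nu(x)\big(f_{x,1}+f_{x,2}+f_{x,3}+f_{x,4}\big)(y)\,\ind_{T^l}(x)\di x=4\nu(y)\int_{\mathbb{R}}af(ay-x)\di x=4a\,\nu(y),
\end{equation*}
using that $f$ is a probability density. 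Hence $4a$ is an eigenvalue of $\mathcal{T}_0$ with a strictly positive eigenfunction, so by Perron--Frobenius (Krein--Rutman) it is the dominant eigenvalue; since $a>1/4$ by hypothesis (\ref{2}), we get that the leading eigenvalue of $\mathcal{T}_0$ strictly exceeds $1$.

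Next I would pass from $\eps=0$ to small $\eps>0$ by a perturbation argument, again exactly the one Larsson uses and which is invoked in Lemma~\ref{lemA:eigenvalue}: the kernels $m^\eps$ and $m^0$ differ only on a set of Lebesgue measure $O(\eps)$ (a thin band near each of the $2\cdot 3^l$ endpoints $\alpha_i,\beta_i$), and $m^0$ is bounded, so $\|m^\eps-m^0\|_{L^2(\mathbb{R}^2)}\to 0$ as $\eps\to 0$. Therefore $\mathcal{T}_\eps\to\mathcal{T}_0$ in operator norm on $L^2$, and since the dominant eigenvalue of a positive compact operator depends continuously on the operator, the leading eigenvalue $\rho(\eps)$ of $\mathcal{T}_\eps$ converges to $4a>1$. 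Consequently there is $\eps_0>0$, which we may also take small enough to satisfy the earlier constraint (\ref{127}), such that $\rho(\eps_0)>1$; fix this $\eps_0$, and set $T=T(\eps_0)$.

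I expect the only genuine obstacle to be the verification in the first paragraph that for $x\in T^l$ the slice $(\ell_{2k(x)}(x),\ell_{2k(x)-1}(x))$ meets $T^l$ in the \emph{full} interval, i.e.\ that the horizontal translates of the stripes $S_1,S_2,S_3$ by $x$ never fall partly into one of the removed holes $[u^{i_1\dots i_r},v^{i_1\dots i_r}]$ — because only then does $\nu=\ind_{T^l}$ remain an exact eigenfunction rather than merely an approximate one. This is precisely the content of the construction of the $T^r$ in Lemma~\ref{lem:support1}: the holes at level $r$ were placed by (\ref{128}) exactly as the $\ell_{2k}$- and $\ell_{2k-1}$-images of the holes at level $r-1$, so that a stripe through a surviving point $x\in T^l$ either maps entirely into $T^{l-1}$ or into a removed hole, and the latter has been ruled out because the recursion was stopped at the last $l$ with $\rho_{l+1}\ge 0$; I would spell this out with reference to Figures~\ref{fig:kernel1} and \ref{fig:130}. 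Everything else (compactness of the operators, continuity of the Perron eigenvalue, the measure estimate on $m^\eps-m^0$) is routine and identical to the simple case.
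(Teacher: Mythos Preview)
Your approach matches the paper's exactly --- indeed the paper's entire proof is the single sentence ``Changing $T^0$ to $T^l$ in the proof of Lemma~\ref{lemA:eigenvalue} we obtain the proof of Lemma~\ref{lem:eigenvalue}.''

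Your elaboration on why $\ind_{T^l}$ is an eigenfunction with eigenvalue $4a$ contains a direction error, however. In the displayed computation you integrate over $x$, so what must be checked is that for each fixed $y\in T^l$ the \emph{horizontal} section $\{x:(x,y)\in S_1\cup S_2\cup S_3\}$ lies entirely in $T^l$ --- equivalently, that for every $x$ lying in one of the removed holes the stripes at $x$ miss $T^l$. The condition you discuss instead --- that for $x\in T^l$ the vertical slice $E_1(x)=(\ell_{2k(x)}(x),\ell_{2k(x)-1}(x))$ lands inside $T^l$ ``without being clipped'' --- is the converse implication; it is not what the left-eigenfunction computation requires, and it is in fact false near the endpoints $\alpha_i,\beta_i$ (this clipping is precisely why the further $\eps$-truncation (\ref{eq:truncation}) is needed in Lemma~\ref{lem:support1}). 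The correct direction does follow from the recursive definition (\ref{128}) of the holes: for $x$ in a level-$r$ hole the relevant stripe maps into a level-$(r{-}1)$ hole, hence outside $T^l$. With that correction the remainder of your argument (positivity of the eigenfunction giving the Perron--Frobenius eigenvalue, and the $L^2$-perturbation from $\eps=0$ to small $\eps>0$) is exactly what the paper intends.
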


\begin{pf}
Changing $\T^0$ to $\T^l$ in the proof of Lemma \ref{lemA:eigenvalue}, we obtain the proof of Lemma \ref{lem:eigenvalue}.
More precisely, it is enough to prove that $4a$ is an eigenvalue of the
operator $\mathcal{T}_l$ with eigenfunction $h(x) =\ind_{T^l}(x),$
where $T^l$ is defined in the proof of Lemma \ref{lem:support1}:
\begin{eqnarray*}
\mathcal{T}_l h(y)&=&\int_{\mathbb{R}}h(x)m(x,y)\,\di x\\
&=&\int_{\mathbb{R}}h(x)\Biggl(\sum_{i=1}^4f_{x,i}(y)\Biggr)\ind
_{T^l}(y)\,\di x\\
&=&4ah(y)\int_{T^l}\sum_{i=1}^4f_{\triangle}(ay+c_i-x)\,\di x\\
&=&4ah(y),
\end{eqnarray*}
provided we show that for all $i=1,2,3,4$ and for all $y\in T^l,$
\[
\int_{T^l}f_{\triangle}(ay+c_i-x)\,\di x=1.
\]
So, we have to show that
for all $y\in T^l$ and for $i=1,2,3,4$, we have
\begin{equation}\label{301}
\{x\dvtx f_{\triangle}(ay+c_i-x)>0\}\subset T^l.
\end{equation}
This holds since we have constructed the intermediate type space $T^l$
so that this property is satisfied; see the left figure in Figure \ref{fig:130}. We have subtracted intervals of the form
$(u^{i_1\dots i_r k},v^{i_1\dots i_r k})$ in (\ref{300}) during the
construction of successive intermediate type spaces $T^{r+1}$,
$r=0,\dots,l-1$. If $y\in T^{r+1}$, then each interval\vspace{1pt} of the form
$(u^{i_1\dots i_r k},v^{i_1\dots i_r k})$ is disjoint from $[\ell
_{2k-1}^{-1}(y),\ell_{2k}^{-1}(y)]$ for all $y\in T^l$ and $k=1,2,3$.
Therefore, for any $y\in T^l$, we have $[\ell_{2k-1}^{-1}(y),\ell
_{2k}^{-1}(y)]\subset T^l$. Further, for any $i=1,2,3,4,$ there exists
a positive integer $k_i$ ($k_1=1$, $k_2=k_4=2$, $k_3=3$) such that
\[
\{x:f_{\triangle}(ay+c_i-x)>0\}=(\ell_{2k_i-1}^{-1}(y),\ell_{2k_i}^{-1}(y)).
\]
Hence, (\ref{301}) holds.

The proof is now completed analogously to the proof of Lemma
\ref{lemA:eigenvalue}.
\end{pf}

%
%f4 ###

\begin{lemma}\label{lem:support2}
Let $\T$ be as in Lemma \ref{lem:eigenvalue}.
There then exists an $n$ such that for all $x\in\T$, $\{y:m_n(x,y)>0\}
=\T$.
\end{lemma}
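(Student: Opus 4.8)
The plan is to mimic the proof of Lemma \ref{lemA:support2} in the simple case, replacing the connected type space by the union of $3^l$ intervals and keeping track of the expansion rate $1/a$. Write $E_n(x)=\{y:m_n(x,y)>0\}$; as in the simple case the recursion $m_{n+1}(x,y)=\int_T m_n(x,z)m_1(z,y)\di z$ gives $E_{n+1}(x)=\bigcup_{z\in E_n(x)}E_1(z)$, so it suffices to show that starting from the interval guaranteed by Lemma \ref{lem:support1} (of length $\geq\kappa$, living inside one stripe $S_k$ over the vertical line through $x$), after a bounded number of iterations the image reaches all of $T$. First I would record the key structural fact, already established inside the proof of Lemma \ref{lem:support1} and encoded in \eqref{129}–\eqref{130}: for any component $[\alpha_i+\eps,\beta_i-\eps]$ of $T$, the set $E_1(x)$ for $x$ in that component is a subset of the vertical slice of a single stripe $S_k$, and that slice, intersected with $T\times T$, is exactly three congruent sub-squares of side $s$ stacked over three (consecutive in $T^{l-1}$, hence in $T$) components, separated by gaps of size $\rho_l+2\eps$.

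The engine of the argument is the expansion: each line $\ell_m$ has slope $1/a>3$, so if $E_n(x)$ contains a subinterval $J$ of one component of $T$, then $E_{n+1}(x)\supseteq E_1(y)$ for the appropriate $y\in J$, and as $y$ ranges over $J$ the three images of $J$ under the affine maps $x\mapsto\ell_{2k-1}(x)$-type relations sweep out intervals of length $\tfrac1a|J|$ inside three components of $T$. The crucial point is that this expansion does not ``leak'' into the removed holes: because of the nested construction \eqref{128}–\eqref{129}, the preimage under $\ell_{2k}$ (resp.\ $\ell_{2k-1}$) of a hole $[u^{i_1\dots i_r},v^{i_1\dots i_r}]$ is again a hole at the next level, so the image of a subinterval of a \emph{component} of $T$ is a union of subintervals of \emph{components} of $T$. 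Thus, once $E_n(x)$ contains an interval of length $\geq\kappa$ inside some component, after finitely many — say $n_1$, a uniform bound obtained exactly as in case 1) of Lemma \ref{lemA:support2} via $n_1\le\lceil\log_{1/a}(\text{diam}\,T/\kappa)\rceil$ — steps, $E_n(x)$ contains at least one \emph{entire} component of $T$. It then remains to show that from one full component the orbit spreads to all $3^l$ components in a bounded number of steps; this follows because the ``stripe over a component'' picture shows that $E_1(\cdot)$ restricted to a full component already hits three distinct components (with the stacking described above), and the incidence graph on the $3^l$ components induced by this is strongly connected — indeed it is essentially the adjacency structure of the three stripes $S_1,S_2,S_3$ lifted through $l$ levels, and one checks it connects everything in at most $O(l)$ steps. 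Taking $n$ to be the sum of these two uniform bounds finishes the proof.

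The main obstacle I expect is the bookkeeping in the second half: proving rigorously that iterating $E_1$ starting from a full component eventually covers \emph{every} component uniformly in the starting component. In the simple case ($l=0$) this is trivial because there is only one component, which is why Lemma \ref{lemA:support2} is so short; here one genuinely needs the combinatorial claim that the directed graph on $\{1,\dots,3^l\}$, with an edge $i\to i''$ whenever $E_1$ of component $i$ meets component $i''$, is primitive (irreducible and aperiodic) with a bounded diameter. I would establish this by induction on $l$, using \eqref{129} to relate the level-$l$ incidence structure to the level-$(l-1)$ one together with the three-stripe pattern, exactly as the holes were built up; the expansion factor $1/a>3$ guarantees that each component's image is wide enough to straddle the required neighbouring components. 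A secondary, more routine point is handling the three ``boundary'' cases (a), (b), (c) from the proof of Lemma \ref{lem:support1} uniformly, but since in every case $E_1(x)$ contains an interval of length at least the fixed $\kappa>0$, the first-half estimate goes through verbatim and these cases do not cause real trouble.
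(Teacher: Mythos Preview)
Your overall strategy is correct and the first half (growing the $\kappa$-interval until it fills one whole component of $T$) is essentially the paper's Step~2, although the paper is more careful than you are on one point: it tracks intervals \emph{anchored} at a component endpoint $\alpha_i+\eps$ (or $\beta_i-\eps$), showing that under one application of $E_1$ via a single stripe an anchored interval of length $z$ becomes an anchored interval of length $>z/a$; this is what prevents the growing interval from being chopped up by the holes before it reaches length $s$. Your ``no leakage'' remark gestures at this but does not quite pin it down.

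The real difference is in the second half. You correctly identify the spreading from one full component to all $3^l$ components as the main obstacle, and propose to attack it combinatorially via a primitivity/strong-connectivity argument on the incidence digraph of components, proved by induction on $l$. The paper bypasses this entirely with a much cleaner ``level-peeling'' argument that you are missing. By the very recursion \eqref{128}--\eqref{129} defining the holes, each level-$l$ component $(\alpha_i,\beta_i)$ sits inside a unique level-$(l-1)$ pre-interval $(\alpha_{i,l-1},\beta_{i,l-1})$, and the $E_1$-image of $[\alpha_i+\eps,\beta_i-\eps]$ through the corresponding stripe $S_{k(0)}$ is exactly $(\alpha_{i,l-1},\beta_{i,l-1})\cap T$. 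Iterating, $E_1$ of a full level-$(l-r)$ pre-interval (intersected with $T$) contains a full level-$(l-r-1)$ pre-interval intersected with $T$; after precisely $l$ steps one reaches $(\alpha_{i,0},\beta_{i,0})\cap T=(-1+c,1-c)\cap T=T$. So the ``graph diameter'' you were worried about is exactly $l$, obtained for free from the telescoping hole construction, with no separate induction or connectivity analysis needed. Your approach can be made to work but is considerably more laborious; the paper's observation is that the hierarchical structure of the holes inverts perfectly under the stripe maps.
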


Since the function $m_n(\cdot,\cdot)$ is piecewise continuous on the
compact set $T$, Lemma \ref{lem:support2} implies that there exists an
$a_{\min}>0$ such that $m(x,y)\geq a_{\min}$ for any $x,y\in
T$. Further, using the fact that $m(x,\cdot)$ is bounded, we
immediately obtain that $a_{\max}:=\sup_{x\in T}\ev_x [\mathcal
{Z}^2_1(T)]$ is finite. Therefore, we have
the following result.
\begin{corollary}
Let $\T$ be as in Lemma \ref{lem:eigenvalue}. The branching process
$\mathcal{Z}$ with type space $T$ satisfies the conditions (\ref{cond:dens}) and (\ref{cond:secmom}).
\end{corollary}

\begin{pf*}{Proof of Lemma \ref{lem:support2}}
We will prove the lemma in two steps. Recall the definition of
$E_n(x)$: $E_n(x)=\{y\dvtx m_n(x,y)>0\}$.
\begin{step}\label{step1}
$\forall x\in T, \exists i,n$ such that $[\alpha_i+\eps,\beta_i-\eps]\subset E_n(x)$
implies that $E_{n+l}(x)=T$.
\end{step}
\begin{step} \label{step2}
There exists an $N$ such that for every
$x\in T,$ we can find a positive integer $n(x)\leq N$
such that the following holds:
\[
\exists i,\qquad[\alpha_i+\eps,\beta_i-\eps]\subset
E_{n(x)}(x).
\]
\end{step}
As a corollary of these two statements, we obtain that the assertion of
the lemma holds with the choice $n=N+l$. Namely, for any $x\in T$, we
have $E_{N+l}(x)=T$.

\begin{pf*}{Proof of Step \ref{step1}} To verify Step \ref{step1}, we first observe that by
(\ref{eq:E_{n+1}_decompA}), we have
\begin{eqnarray}\label{eq:E_{n+1}_decomp}
\qquad E_{n+1}(x)&=&\bigcup_{y\in E_n(x)}E_1(y)\nonumber\\ [-8pt]\\ [-8pt]
          &=&\bigcup_{y\in E_n(x)}\bigl((\ell_2(y),\ell_1(y))\cup(\ell_4(y),\ell_3(y))\cup(\ell_6(y),\ell_5(y))
          \bigr)\cap\T.\nonumber
\end{eqnarray}
Fix an $i\in\{1,\dots,3^{l}\}$.
First, we define $\alpha_{i,l-r}$ and $\beta_{i,l-r}$ for $r=0,\dots
,l,$ inductively.
For $r=0,$ let $(\alpha_{i,l},\beta_{i,l}):=(\alpha_i,\beta_i)$.
Assume that we have already defined $(\alpha_{i,l-r},\beta_{i,l-r})$.
Using (\ref{128}), we define
$
\alpha_{i,l-(r+1)}$ and $\beta_{i,l-(r+1)}
$ as the unique numbers satisfying
\begin{eqnarray}\label{133}
\alpha_{i,l-r}&=&\pi_1\bigl(\bigl\{(x,y)\dvtx
y=\alpha_{i,l-(r+1)}\bigr\}\cap\ell_{2k(r)-1}\bigr),\nonumber\\ [-8pt]\\ [-8pt]
\beta_{i,l-r}&=&\pi_1\bigl(\bigl\{(x,y)\dvtx
y=\beta_{i,l-(r+1)}\bigr\}\cap\ell_{2k(r)}\bigr),\nonumber
\end{eqnarray}
where $k(r)=1,2,3$. Then, by the construction, we have $(\alpha
_{i,0},\beta_{i,0})=(-1+c,1-c)$.
Let $x\in T$. According to the assumption of Step \ref{step1},
we can find $i,n$ such that
\begin{equation}\label{131}
[\alpha_i+\eps,\beta_i-\eps]=(\alpha_i,\beta_i)\cap T\subset E_n(x)
\end{equation}
holds.
Using induction, we prove that
\begin{equation}\label{132}
E_{n+r}(x)\supset(\alpha_{i,l-r},\beta_{i,l-r})\cap T \qquad\mbox{for }0\leq r\leq l.
\end{equation}
Namely, for $r=0,$ the assertion in the induction is identical to (\ref{131}).
We now suppose that (\ref{132}) holds for $r< l$.
By (\ref{eq:E_{n+1}_decomp}) and (\ref{133}), we have
\begin{eqnarray*}
E_{n+r+1}(x)&=&\bigcup_{y\in E_{n+r}(x)}E_1(y)\\
&\supset& \bigcup_{y\in(\alpha_{i,l-r},\beta_{i,l-r})\cap T} \bigl(\ell_{2k(r)}(y),\ell_{2k(r)-1}(y)\bigr)\cap T\\
&=&\bigl(\alpha_{i,l-(r+1)},\beta_{i,l-(r+1)}\bigr)\cap T,
\end{eqnarray*}
which completes the proof of (\ref{132}). We apply (\ref{132})
for $r=l$. This yields
that $E_{n+l}=(-1+c,1-c)\cap T =T$ holds.
\end{pf*}

\begin{pf*}{Proof of Step \ref{step2}} First, observe that the largest interval in
$E_1(x)$ either has an endpoint that is an endpoint of a connected
component of $T$ [this happens in case (a) and (c) in the end of the
proof of Lemma \ref{lem:support1}] or $E_1(x)=(\ell_{2k_1}(x),\ell
_{2k_1-1}(x))$ [which is case (b) in the same proof]. However, in the
last case, using (\ref{eq:E_{n+1}_decomp}), after $N_1$ steps, where
$N_1$ is the smallest solution of the inequality $(\frac
{2}{a})^{N_1}\cdot\frac{2\gap}{a}>s$, we obtain that the
largest interval contained in $E_{N_1}(x)$ has an endpoint of a
connected component of $T$ (see Figure \ref{fig:rho_recursion}) and its length
is greater than $\kappa$.
In this way, because of the symmetry between the endpoints of the
connected components of $T,$ from now on, we may assume that
$[\alpha_i+\eps,\alpha_i+\eps+z_1)\subset E_1(x),$ where $z_1\geq
\kappa$.
Using (\ref{eq:E_{n+1}_decomp}), we can write
\begin{eqnarray}\label{134}
E_2(x)&\supset& \bigcup_{y\in[\alpha_i+\eps,\alpha_i+\eps+z_1)}(\ell_{2k_1}(y),\ell_{2k_1-1}(y))\cap T\nonumber\\
&=&\bigl(\ell_{2k_1}(\alpha_i+\eps),\ell_{2k_1-1}(\alpha_i+\eps+z_1)\bigr)\cap T\\
&=&\bigl[\alpha^{(2)}+\eps,\alpha^{(2)}+\eps+z_2\bigr)\cap T\nonumber
\end{eqnarray}
for some $k_1\in\{1,2,3\}$, left endpoint
$\alpha^{(2)}\in T$ and $z_2>\frac{1}{a}z_1\geq\frac{1}{a}\kappa$.
If $z_2<s,$ then the largest connected component of $E_2(x)$ has a left
endpoint of one of the connected components of $T$, $\alpha^{(2)}$,
but the other endpoint is in the interior of the same connected
component of $T$. If $z_2\geq s,$ then $E_2(x)$ clearly contains
a %\vadjust{\goodbreak}
connected component of $T$. For $E_n(x)$, $n\geq3,$ we can inductively
define $k_n$, left endpoint $\alpha^{(n)}$ and length $z_n$ in the
same way as above. Observe that $z_n>(\frac{1}{a})^{n-1}
\kappa$ for any $n\geq2$. Let $N_2$ the smallest solution of the
inequality $(\frac{1}{a})^{N_2-1}\kappa>s$. Then,
$E_{N_2}(x)$ contains a connected component of $T$.

Let $N=N_1+N_2$. Then, $E_N(x)$ contains a connected component of $T$.
\end{pf*}
\noqed
\end{pf*}

%s9 ###
\section{Uniform exponential growth}\label{UEG}
In this section, we want to prove an extension of Theorem \ref{thmA:14.1} stating that the population can grow uniformly
exponentially starting from any element of a special interval. For the
precise statement, see Lemma \ref{lem:p2_uniformly_positive}.

First, we will determine the density of the measure $\pr_x (\Z
_1(A)\in\cdot)$.
We use the notation of Lemma \ref{lem:at_most_two} and define, for
$x_1,x_2\in T,$
\[
\pr_{x_1,x_2}:=\pr_{x_1}\otimes\pr_{x_2},
\]
the convolution of the measures $\pr_{x_1}$ and $\pr_{x_2}$.
Recalling the definitions of $A_1^+$, $A_2^+$, $A_3$, $A_2^-$, $A_1^-$
in equation (\ref{def:A}) and the definition of $f_{x,i}$,
$i=1,2,3,4$, in equation (\ref{def:f_xi}), we can state the following lemma.

\begin{lemma}\label{lem:decomp_in_the_first_generation}
For $x\in\T, A\subset T$ and a natural number $L,$ we have the
following equation for any $n\geq1$:
\begin{eqnarray}\label{eq:key}
&&\pr_x \bigl(\Z_{n+1}(A)=L\bigr)=\int_{\T}\pr_z \bigl(\Z_n(A)=L\bigr)h_1(x,z)\,\di z \nonumber\\ [-8pt]\\ [-8pt]
&&\hphantom{\pr_x \bigl(\Z_{n+1}(A)=L \bigr)=}{}+\int_{\T}\int_{\T} \pr_{z_1,z_2} \bigl(\Z_n(A)=L \bigr)h_2(x,z_1,z_2)\,\di z_1\,\di z_2,\nonumber
\end{eqnarray}
where $h_1(x,z)\dvtx \T\times\T\rightarrow\mathbb{R}_+$ and
$h_2(x,z_1,z_2)\dvtx \T\times\T\times\T \rightarrow\mathbb{R}_+$ are
defined as

\begin{eqnarray*}\label{eq:def_of_h_1}
h_1(x,z) =\cases{
\displaystyle f_{x,1}(z), &\quad if $\displaystyle x\in A_1^+\cap T$,\cr
\displaystyle f_{x,1}(z) + 2f_{x,2}(z)\biggl(1-\int_{\T}f_{x,4}(y)\,\di y \biggr), &\quad if $\displaystyle x\in A_2^+\cap T$,\cr
\ds 2f_{x,2}(z)\biggl(1-\int_{\T}f_{x,4}(y)\,\di y \biggr), & \quad if $x\in A_3\cap T$,\cr
\displaystyle f_{x,3}(z) + 2f_{x,2}(z)\biggl(1-\int_{\T}f_{x,4}(y)\,\di y \biggr), &\quad if $\displaystyle x\in A_2^-\cap T$,\cr
\displaystyle f_{x,3}(z), & \quad if $\displaystyle x\in A_1^-\cap T$
}
\end{eqnarray*}
and
\begin{eqnarray*}\label{eq:def_of_h_2}
h_2(x,z_1,z_2) = \cases{
2f_{x,2}(z_1)f_{x,4}(z_2), & \quad if $x\in(A_3\cup A_2^+\cup A_2^-)\cap T$,\cr
0, & \quad otherwise.
}
\end{eqnarray*}
Both are bounded and piecewise uniformly continuous functions in
$x$ on $\T$ for any fixed $z,z_1,z_2\in\T$.
\end{lemma}

\begin{pf}
The decomposition (\ref{eq:key}) is obtained from the Chapman--Kolmogorov equation, that is, by conditioning on the first generation.
In the corresponding formula (\ref{eq:Kolmogorov}), we use one of the
conclusions of Lemma \ref{lem:at_most_two}, that is, that exactly two
squares in generation 1 can only be generated by $Q_2$ and $Q_4$:
\begin{eqnarray}\label{eq:Kolmogorov}
&&\pr_x \bigl(\Z_{n+1}(A)=L\bigr)=\int_{T}\pr_{z}\bigl(\Z_n(A)=L\bigr)\pr_x \bigl(\Z_1(\di
z)=1\bigr)\nonumber\\ [-8pt]\\ [-8pt]
&&\hphantom{\pr_x \bigl(\Z_{n+1}(A)=L\bigr)=}{}+\int_{T}\int_{T}\pr_{z_1,z_2}\bigl(\Z_n(A)=L\bigr)
\pr_x\bigr(\mathcal{Z}^2_1(\di z_1)=1,\mathcal{Z}^4_1(\di
z_2)=1\bigr).\nonumber
\end{eqnarray}
We have to determine the density function $h_1(x,z)$ of
exactly one descendant with type $\di z$ and the
density function $h_2(x,z_1,z_2)$ of exactly two descendants with type
$\di z_1\,\di z_2$. To perform the computation, we note that the statement of Lemma
\ref{lem:at_most_two} remains valid if we replace $\Phi_i(x)$ by
$X_i(x)$ because of the definition of $X_i(x)$ in equation (\ref{def_X_i(x)}).
One can decompose the probability of having exactly one descendant
such that the type of this descendant falls into the set
$(-\infty,z]$ (for any real $z$) as follows:
\[
\pr_x \bigl(\Z_1 ( (-\infty,z])=1 \bigr)=\sum_{i=1}^4\pr\bigl(X_i(x)\in(-\infty,z] , X_j(x)= \nex, \forall j\ne i\bigr).
\]

The decomposition in Lemma \ref{lem:at_most_two}, together with the
remark in the first paragraph of this proof, implies that $\{X_2(x)\neq
\nex\}\cup\{ X_4(x)\neq\nex\}$, $\{X_1(x)\neq\nex\}$ and $\{
X_3(x)\neq\nex\}$ are disjoint events for any $x\in\T$. Therefore,
one obtains
\begin{eqnarray*}
&&\pr_x \bigl(\Z_1((-\infty,z])=1 \bigr)=\pr\bigl(X_1(x)\in(-\infty,z] \bigr)\\
&&\hphantom{\pr_x \bigl(\Z_1((-\infty,z])=1 \bigr)=}{}+2\pr\bigl(X_2(x)\in(-\infty,z]\bigr) \pr \bigl( X_4(x)=\nex\bigr)\\
&&\hphantom{\pr_x \bigl(\Z_1((-\infty,z])=1 \bigr)=}{}+\pr\bigl( X_3(x)\in(-\infty,z] \bigr),
\end{eqnarray*}
using the fact that $X_2(x)$ and $X_4(x)$ are independent and
identically distributed.
Since $X_i(x)$ has density $f_{x,i}$, one gets that this equals
\begin{eqnarray*}
&&\int_{(-\infty,z]}f_{x,1}(y)\,\di y\cdot\ind_{(A_1^+\cup A_2^+)\cap T}(x)\\
&&\quad{}+2\int_{(-\infty,z]}f_{x,2}(y)\,\di y\,\biggl(1-\int_{\T} f_{x,4}(y)\,\di y \biggr)\cdot\ind_{(A_3\cup A_2^+\cup A_2^-)\cap T}(x)\\
&&\quad{}+\int_{(-\infty,z]}f_{x,3}(y)\,\di y\cdot\ind_{(A_1^-\cup A_2^-)\cap T}(x)\\
&&\qquad=\int_{(-\infty,z]}h_1(x,y)\,\di y.
\end{eqnarray*}

Let us next deal with exactly two descendants with types falling into
$(-\infty,z_1]$ (resp. $(-\infty,z_2]$).
This probability equals
\[
2\pr\bigl( X_2(x)\in(-\infty,z_1],X_4(x)\in(-\infty,z_2] \bigr).
\]
Since $X_2(x)$ and $X_4(x)$ are independent and identically
distributed, one obtains that this equals
\begin{eqnarray*}
&&2\int_{(-\infty,z_1]}f_{x,2}(y)\,\di y\,\int_{(-\infty,z_2]}f_{x,4}(y)\,\di y\cdot\ind_{( A_3\cup A_2^+\cup A_2^-)\cap T}(x)\\
&&\qquad=\int_{(-\infty,z_1]}\int_{(-\infty,z_2]}h_2(x,y_1,y_2)\,\di y_1\,\di y_2.
\end{eqnarray*}
Summarizing these considerations, one obtains (\ref{eq:key}).

The piecewise continuity of $h_1(x,z)$ and $h_2(x,z_1,z_2)$ in $x$
follows from the definitions of $h_1$ and $h_2$, respectively. Since
they have compact support, $h_1$ and $h_2$ are piecewise uniformly
continuous in $x$.
\end{pf}

Let $A\subset\T$ such that the Lebesgue measure of $A$ is positive.
Let $W_n(A)=\Z_n(A)\rho^{-n}$ and $W(A)=\lim_{n\to\infty}W_n(A),$ which
almost surely exists by Theorem~\ref{thmA:14.1}.
We need a stronger result: the random variable $W(A)$ is strictly
separated from 0
with uniformly positive probability for some neighborhood of the
initial type 0.
This is shown in the next lemma.

\begin{lemma}\label{lem:W_uniformly_positive}
For some neighborhood $J\subset\T$ of 0 and positive numbers $y$ and~$r,$ we have
\begin{equation}\label{eq:lem_W_uniformly_positive}
\inf_{x\in J}\pr_x \bigl(W(A)> y\bigr)\geq r.
\end{equation}
\end{lemma}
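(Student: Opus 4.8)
The plan is to bootstrap from the non-uniform statement \eqref{eq:star} in Theorem \ref{thmA:14.1}, which gives, for a fixed initial type, $\pr_x(W(A)>0)>0$, to a statement that is uniform over a small neighborhood $J$ of $0$ and with the threshold $0$ replaced by a genuinely positive number $y$. The first step is to fix a good starting type. Since $0$ is an interior point of $T$ (by Lemma \ref{lem:support1}) and $\mathcal{L}\mathbf{eb}_1(A)>0$, Theorem \ref{thmA:14.1} gives $\pr_0(W(A)>0)>0$; since $\{W(A)>0\}=\bigcup_{j}\{W(A)>1/j\}$ we may pick $y>0$ and $2r>0$ (a convenient factor of $2$ for later) with $\pr_0(W(A)>2y)\geq 2r$. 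The task is then to propagate this lower bound to all $x$ in a neighborhood $J$ of $0$, with the weaker conclusion $\pr_x(W(A)>y)\geq r$.

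The key idea is a one-step coupling argument using the decomposition \eqref{eq:key} from Lemma \ref{lem:decomp_in_the_first_generation}, together with the fact that $W$ is a $\pr_x$-a.s.\ limit that is \emph{additive over the subtrees issuing from generation one}. Concretely, conditioning on the first generation, $W(A) = \sum_{i} W^{(i)}(A)$ where the $W^{(i)}(A)$ are, conditionally on the types $z^{(i)}$ of the generation-one squares, independent copies of the limit variable started from those types (rescaled by $\rho^{-1}$, but since $W$ is the $\rho^{-n}$-normalized limit this is just $W^{(i)}(A)\sim \rho^{-1}W(A)$ under $\pr_{z^{(i)}}$; I will absorb the harmless factor $\rho^{-1}$ into the constants). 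In particular, if \emph{at least one} generation-one descendant has a type $z$ lying in a small interval $J_0$ around $0$ on which $\inf_{z\in J_0}\pr_z(W(A)>2y)\geq r$ — and such a $J_0$ exists a priori only if we already knew some uniformity, so instead I use the following self-improving scheme. Let $p_\ast:=\sup\{p: \exists \text{ interval } J\ni 0,\ \inf_{x\in J}\pr_x(W(A)>y)\geq p\}$ for the fixed $y$ above; by the $\pr_0$ bound and right-continuity-type arguments one checks $p_\ast>0$ is what we want, but the cleanest route avoids defining $p_\ast$ and argues directly, as follows.

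The direct argument: by Lemma \ref{lem:support1}, $h_1(x,\cdot)$ (or, in the region $A_3$, $h_2$) is uniformly positive and the densities $f_{x,i}$ vary piecewise-uniformly-continuously in $x$ (the bounds \eqref{eq:h_unif_cont}); hence there is a neighborhood $J$ of $0$, a further small interval $J_0\ni 0$, and a constant $c_0>0$ such that for every $x\in J$ the $\pr_x$-probability that some generation-one descendant has type in $J_0$ is at least $c_0$, \emph{and} moreover $0\in J_0\subset J$ with $J_0$ small enough that the event "the corresponding subtree's $W$-value exceeds $2\rho y$" has, started from \emph{any} $z\in J_0$, probability at least — here is where we must already have the uniform bound on $J_0$, so the genuinely non-circular version is to iterate: using \eqref{eq:key} $n$ times we express $\pr_x(W(A)>y)$ as an integral of $\pr_z(W(A)>\text{something})$ against the iterated kernel $m_n(x,\cdot)$, which by Lemma \ref{lem:support2} and condition \eqref{cond:dens} is uniformly positive on $T\times T$ for $n\geq n_0$; therefore
\[
\pr_x(W(A)>y)\ \geq\ \text{(const)}\int_{T} \pr_z\bigl(W(A)> y\bigr)\,\di z\ \geq\ \text{(const)}\cdot\mathcal{L}\mathbf{eb}_1\bigl(\{z:\pr_z(W(A)>y)\geq \tfrac12\pr_0(W(A)>2y)\}\bigr),
\]
where I have used the additivity of $W$ over subtrees to pass from "$W$ started at $x$ after the generation-$n_0$ split" to "one of the subtrees started at some $z$ has $W(A)>y$," and $\eqref{eq:star}$ applied on a positive-measure set of $z$ to keep the last quantity bounded below by a positive constant $r$, uniformly in $x\in T$ (a fortiori on any neighborhood $J$ of $0$). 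This yields \eqref{eq:lem_W_uniformly_positive}.

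The main obstacle I anticipate is making the additivity/branching identity $W(A)=\sum_i W^{(i)}(A)$ rigorous together with the conditional independence and the correct $\rho^{-1}$-scaling: one must justify that the $\pr_x$-a.s.\ limit $W_n(A)=\rho^{-n}\mathcal{Z}_n(A)$ decomposes, after one generation, as $\rho^{-1}\sum_i$ (independent copies of $W(A)$ started from the offspring types), which is standard for multitype branching processes but requires invoking that the limit exists simultaneously $\pr_z$-a.s.\ for (Lebesgue-)almost every offspring type $z$ — this is exactly what \eqref{13} provides. The rest is the uniform-positivity input from \eqref{cond:dens}/Lemma \ref{lem:support2} and a measure-theoretic bound via \eqref{eq:star}, both of which are already available.
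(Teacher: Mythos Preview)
Your final ``direct argument'' has the right spirit but contains real gaps. First, iterating \eqref{eq:key} $n_0$ times does \emph{not} produce an integral against the mean kernel $m_{n_0}(x,\cdot)$: equation \eqref{eq:key} decomposes a \emph{probability}, and its iterate is a sum over all generation-$n_0$ population configurations weighted by products of $h_1,h_2$, not the expectation density $m_{n_0}$. A correct bridge from the uniform positivity of $m_{n_0}$ to a lower bound on $\pr_x(W(A)>y)$ requires the deterministic bound $\mathcal{Z}_{n_0}(T)\le 2^{n_0}$ (so that $\pr_x(\mathcal{Z}_{n_0}(G)\ge 1)\ge 2^{-n_0}M_{n_0}(x,G)$ for any $G\subset T$), which you never invoke; and the threshold on the right of your displayed inequality should be $\rho^{n_0}y$, not $y$. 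Second, to conclude that the set $\{z:\pr_z(W(A)>y)\ge \tfrac12\pr_0(W(A)>2y)\}$ has positive Lebesgue measure you need more than \eqref{eq:star}: that statement gives only $\pr_z(W(A)>0)>0$ pointwise, with no control on tails at a \emph{fixed} positive level. One must add measurability of $z\mapsto\pr_z(W(A)>y')$ and a monotone-convergence argument to extract a single $y'>0$ with $\int_T\pr_z(W(A)>y')\di z>0$. With these repairs your route does work (and in fact yields the bound uniformly on all of $T$, not merely on a neighborhood of $0$), but none of the needed steps is in your sketch.

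The paper's proof is shorter and structurally different: it uses \emph{one} step of \eqref{eq:key} rather than $n_0$, and exploits the piecewise uniform continuity of $h_1(x,z)$ and $h_2(x,z_1,z_2)$ in $x$ established in \eqref{eq:h_unif_cont}. Passing to the limit in $n$ by dominated convergence (at continuity points of the limiting distribution) yields a sandwich $\pr_x(W(A)\le y)\le \beta(x,y,A)\le \pr_x(W(A)\le y+\eps)$ with $\beta(\cdot,y,A)$ piecewise continuous on $T$. Since \eqref{eq:star} gives $\pr_0(W(A)>u)>2r$ for some $u,r>0$, continuity of $\beta$ at $0$ immediately propagates the bound to a neighborhood $J$. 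Thus the paper's mechanism is \emph{local regularity in the initial type} via the one-step offspring densities, whereas your plan aims at \emph{global uniformity} via the iterated mean kernel; the former avoids the scaling, boundedness, and measurability issues you would have to confront.
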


\begin{pf}
Lemma \ref{lem:decomp_in_the_first_generation} implies that
\begin{eqnarray}\label{eq:W_n_H_n}
\pr_x \bigl(W_{n+1}(A)\leq y\bigr)&=&\pr_x\bigl(\Z_{n+1}(A)\leq\rho^{n+1}y \bigr)\nonumber\\
&=&\int_{\T}\pr_z \bigl( W_n(A)\leq\rho y\bigr)h_1(x,z)\,\di z \\
&&{}+\int_{\T}\int_{\T} \pr_{z_1,z_2} \bigl( W_n(A)\leq\rho y\bigr)h_2(x,z_1,z_2)\,\di z_1\,\di z_2.\nonumber
\end{eqnarray}
We will investigate the convergence of the last two terms in (\ref{eq:W_n_H_n}).

Theorem \ref{thmA:14.1} implies that we have, for all $z\in\T,$
\begin{equation}\label{eq:weak_convergence_of_P_z}
\lim_{n\to\infty}\pr_z \bigl(W_n(A)\leq y\bigr)=\pr_z \bigl(W(A)\leq y\bigr)
\end{equation}
if $y\in \operatorname{Cont}(\pr_{z,A}),$ where $\operatorname{Cont}(\pr_{z,A})$ denotes the set of
continuity
points of the distribution function on the right-hand side of (\ref
{eq:weak_convergence_of_P_z}).

Next, we seek the weak convergence of the measure $\pr_{z_1,z_2}(W_n(A)\in\cdot)$, which is the convolution of the measures
$\pr_{z_1}(W_n(A)\in\cdot)$ and $\pr_{z_2}
(W_n(A)\in\cdot)$. Since they are weakly convergent, the
convolution is also weakly convergent. So,
\begin{equation}\label{eq:weak_convergence_of_P_z1_z2}
\lim_{n\to\infty}\pr_{z_1,z_2} \bigl(W_n(A)\leq y\bigr)=\pr_{z_1,z_2}\bigl(W(A)\leq y\bigr)
\end{equation}
if $y\in \operatorname{Cont}(\pr_{z_1,z_2,A})$.

Let, for $z,z_1,z_2\in\T$, $y>0$ and $\eps$ a small positive number
(to be chosen later), $t_{y}:=t(z,z_1,z_2;y, \eps)$ be a real number
such that
\[
y\leq t_y< y+\eps\quad\mbox{and}\quad\rho t_y\in \operatorname{Cont}(\pr_{z,A})\cap \operatorname{Cont}(\pr_{z_1,z_2,A}),
\]
and let us define the following two functions:
\begin{eqnarray*}\label{eq:def_of_beta_n_and_beta}
\theta_{n+1} (x,y,A)&=&\int_{\T}\pr_z \bigl( W_n(A)\leq\rho t_y\bigr)h_1(x,z)\,\di z\\
&&{}+ \int_{\T}\int_{\T} \pr_{z_1,z_2} \bigl(W_n(A)\leq\rho t_y\bigr)h_2(x,z_1,z_2)\,\di z_1\,\di z_2,\\
\theta(x,y,A)&=&\int_{\T}\pr_z \bigl( W(A)\leq\rho t_y\bigr)h_1(x,z)\,\di z \\
&&{}+\int_{\T}\int_{\T} \pr_{z_1,z_2} \bigl(W(A)\leq\rho t_y\bigr)h_2(x,z_1,z_2)\,\di z_1\,\di z_2 .
\end{eqnarray*}
Using the decomposition (\ref{eq:W_n_H_n}), the definition of $t_y$
and the right-continuity
of distribution functions, we can derive the following bounds:
\[
\pr_x \bigl(W_{n+1}(A)\leq y\bigr)\leq\theta_{n+1}(x,y,A) \leq\pr_x
\bigl(W_{n+1}(A)\leq y+\eps\bigr).\label{eq:bound_to_beta_n(x,eps)}
\]
By using (\ref{eq:weak_convergence_of_P_z}), (\ref
{eq:weak_convergence_of_P_z1_z2})
and the bounded convergence theorem, we get that $\theta_n(x,y,A)$
converges as $n\to\infty,$ so
\begin{equation}\label{eq:bound_to_beta(x,eps)}
\pr_x \bigl(W(A)\leq y\bigr)\leq\theta(x,y,A) \leq\pr_x \bigl(W(A)\leq y+\eps\bigr).
\end{equation}
Using the piecewise continuity of $h_1$ and $h_2$ in $x$ (Lemma \ref{lem:decomp_in_the_first_generation})
 and bounded convergence, one can
see that $\theta_{n} (x,y,A)$ and $\theta(x,y,A)$ are piecewise
continuous on $\T$ in $x$.

Using inequality (\ref{eq:star}) in Theorem \ref{thmA:14.1} and the
right-continuity of distribution functions, we can
find two positive numbers $r,u$ such that $\pr_0(W(A)>u)>2r$ or, equivalently,
$\pr_0(W(A)\leq u)\leq1-2r$.
Let $y=u-\eps$ for some positive $\eps<u$. Using the second
inequality of (\ref{eq:bound_to_beta(x,eps)}), one gets $\theta
(0,y,A)\leq\pr_0(W(A)\leq y+\eps)\leq1-2r$.
Since $\theta(x,y,A)$ is piecewise continuous on $\T,$ there exist an
interval $J\subset\T$ which is a neighborhood of 0 such that the
bound $\theta(x,y,A)$ is uniformly smaller than 1 on this interval,
that is,
$\sup_{x\in J}\theta(x,y,A)\leq1-r$.
The first inequality of (\ref{eq:bound_to_beta(x,eps)})
implies that $ \sup_{x\in J} \pr_x (W(A)\leq y)\leq\sup_{x\in
J}\theta(x,y,A)\leq1-r,$ which yields
the required bound in (\ref{eq:lem_W_uniformly_positive}).
\end{pf}

\begin{lemma}\label{lem:p2_uniformly_positive}
There exist two positive numbers $\eta$, $r$, an integer $N$ and a
number $K$ with $0<K<\frac18$ such that
\[
\inf_{n\geq N}\inf_{x\in[-K,K]}\pr_x \bigl(\Z_n([-K,K ])>\eta\rho^n \bigr)
> \frac{r}{2}.
\]
\end{lemma}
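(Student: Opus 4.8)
The plan is to derive Lemma~\ref{lem:p2_uniformly_positive} from Lemma~\ref{lem:W_uniformly_positive} by upgrading the almost sure convergence $W_n(A)\to W(A)$ to a lower bound on $\pr_x(W_n(A)>\eta)$ which is uniform both in the starting type $x\in[-K,K]$ and in the generation $n\ge N$; the machinery is already available through the Chapman--Kolmogorov decomposition of Lemma~\ref{lem:decomp_in_the_first_generation}, and the work is to rerun the proof of Lemma~\ref{lem:W_uniformly_positive} while keeping track of uniformity. First I would fix the sets: applying Lemma~\ref{lem:W_uniformly_positive} with $A=[-\kappa,\kappa]$ for small $\kappa>0$ yields a neighbourhood $J\subset T$ of $0$ and $y,r>0$ with $\inf_{x\in J}\pr_x(W([-\kappa,\kappa])>y)\ge r$. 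Here one must handle a small circularity, since we need the common set $[-K,K]$ to satisfy $[-K,K]\subset J$: I would note that $J$ may be taken to contain an interval $[-\kappa_0,\kappa_0]$ \emph{independent of} $\kappa$, because in the proof of Lemma~\ref{lem:W_uniformly_positive} the map $x\mapsto\beta(x,y,A)$ is Lipschitz on each component of $T$ with a constant depending only on $a,b,|T|$ (this is~(\ref{eq:h_unif_cont})), while $\pr_0(W([-\kappa,\kappa])>0)=\pr_0(W(T)>0)>0$ for every $\kappa>0$ by the proportionality relation and the positivity~(\ref{eq:star}) of Theorem~\ref{thmA:14.1}, so the margin between $\beta(0,y,A)$ and $1-r$ that has to be bridged by continuity stays bounded below. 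Choosing $K:=\kappa$ with $0<K<\min\{\kappa_0,\tfrac18\}$ and small enough that $[-K,K]$ lies in one connected component $[\alpha_j,\beta_j]$ of $T$, we get $[-K,K]\subset J$, hence $\inf_{x\in[-K,K]}\pr_x(W([-K,K])>y)\ge r$.

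Next I would reproduce, with uniformity, the $\beta_n$ argument of Lemma~\ref{lem:W_uniformly_positive}. With $A=[-K,K]$ and $\eps>0$ fixed small, form $\beta_n(x,y,A)$ and $\beta(x,y,A)$ as there; from~(\ref{eq:W_n_H_n}) one has the sandwich $\pr_x(W_n(A)\le y)\le\beta_n(x,y,A)\le\pr_x(W_n(A)\le y+\eps)$ for every $n$, and $\beta_n(x,y,A)\to\beta(x,y,A)$ pointwise by~(\ref{eq:weak_convergence_of_P_z}), (\ref{eq:weak_convergence_of_P_z1_z2}) and bounded convergence. The additional point is that this convergence is uniform on $[-K,K]$: the $x$--dependence of $\beta_n$ enters only through $h_1(x,\cdot)$ and $h_2(x,\cdot,\cdot)$, the remaining factors are bounded by $1$, and since $[-K,K]$ lies in a single component of $T$, estimate~(\ref{eq:h_unif_cont}) gives $|\beta_n(x_1,y,A)-\beta_n(x_2,y,A)|\le C|x_1-x_2|$ with $C$ not depending on $n$; pointwise convergence plus this equicontinuity on the compact interval $[-K,K]$ then forces $\sup_{x\in[-K,K]}|\beta_n(x,y,A)-\beta(x,y,A)|\to0$.

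To conclude, I would combine the two. As established in the proof of Lemma~\ref{lem:W_uniformly_positive} (with the choice $y=u-\eps$, $\pr_0(W(A)>u)>2r$) one has $\sup_{x\in[-K,K]}\beta(x,y,A)\le\sup_{x\in J}\beta(x,y,A)\le1-r$, so by the uniform convergence there is an $N$ with $\sup_{n\ge N}\sup_{x\in[-K,K]}\beta_n(x,y,A)\le1-\tfrac r2$; hence for all $n\ge N$ and $x\in[-K,K]$,
\[
\pr_x\bigl(\Z_n([-K,K])>y\rho^n\bigr)=1-\pr_x\bigl(W_n([-K,K])\le y\bigr)\ge1-\beta_n(x,y,A)\ge\tfrac r2 .
\]
Setting $\eta:=y$ (and shrinking $r$ by a harmless constant factor to obtain the strict inequality) gives Lemma~\ref{lem:p2_uniformly_positive}. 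I expect the uniformity step of the middle paragraph to be the main obstacle --- converting the pointwise--in--$x$ almost sure limit $W_n\to W$ into a bound uniform over both $x\in[-K,K]$ and $n\ge N$ --- with the equicontinuity coming from~(\ref{eq:h_unif_cont}) being what makes it go through, and the circular dependence between $K$ and $J$ a secondary nuisance handled as in the first paragraph.
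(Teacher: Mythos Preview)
Your proposal is correct and follows the same overall architecture as the paper's proof: start from Lemma~\ref{lem:W_uniformly_positive}, use the $\beta_n/\beta$ sandwich coming from the Chapman--Kolmogorov decomposition of Lemma~\ref{lem:decomp_in_the_first_generation}, establish that $\beta_n\to\beta$ uniformly on $[-K,K]$, and conclude. There are two tactical differences worth noting.

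First, the paper avoids your circularity between $K$ and $J$ entirely by applying Lemma~\ref{lem:W_uniformly_positive} with $A=T$ rather than $A=[-\kappa,\kappa]$. This fixes $J$, $y$, $r$ once and for all; then $K$ is chosen with $[-K,K]\subset J$, and only afterwards is the bound transferred to the set $[-K,K]$ via the proportionality $W([-K,K])=\gamma\, W(T)$ from Theorem~\ref{thmA:14.1}. You invoke the same proportionality relation, but only to argue that the margin $r$ (and hence the size of $J$) does not degenerate as $\kappa\to 0$; the paper's route is shorter and sidesteps the need for that argument. Second, for the uniform convergence of $\beta_n$ to $\beta$ on $[-K,K]$, the paper does not use equicontinuity: since the $x$--dependence sits only in $h_1,h_2$, it simply pulls $\sup_{x,z}h_1$ and $\sup_{x,z_1,z_2}h_2$ outside the integrals and applies bounded convergence to the remaining $x$--free integrals. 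Your equicontinuity argument via~(\ref{eq:h_unif_cont}) is equally valid; the paper's estimate is a touch more direct.
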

\begin{pf}
We apply Lemma \ref{lem:W_uniformly_positive} with $A=\T$ and obtain
the numbers $y$, $r$ and the set $J$. Let $K$ be a positive number such
that $K<\frac18$ and $[-K,K]\subset J$. We then have
\[
\inf_{x\in[-K,K]}\pr_x \bigl(W(\T)> y\bigr)\geq r.
\]
Using Theorem \ref{thmA:14.1}, we get that
\[
W([-K,K])=\gamma W(\T)
\]
holds $\pr_x$ almost surely for any $x\in\T$, where
\[
\gamma=\frac{\int_{[-K,K]}\nu(z)\,\di z}{\int_{\T}\nu(z)\,\di z}.
\]
Hence, we have the bound
\[
\inf_{x\in[-K,K]}\pr_x \bigr(W([-K,K])> \eta+\eps\bigl)> r,
\]
where $\eta+\eps=\gamma y$ for some positive $\eta$ and $\eps$.
This and the second inequality of (\ref{eq:bound_to_beta(x,eps)})
together imply that $\theta(x,\eta,[-K,K ])$ is uniformly smaller
than 1:
\begin{equation}\label{eq:delta+eps}
\sup_{x\in[-K,K ]}\theta(x,\eta,[-K,K ])\leq\sup_{x\in[-K,K]}\pr_x \bigl(W([-K,K])\leq\eta+\eps\bigr)\leq1- r.
\end{equation}

We will show that $\theta_n(x,\eta,[-K,K ])$ converges uniformly to
$\theta(x,\eta,[-K,K ])$ on $[-K,K]$ as $n$ tends to infinity.
Writing
\[
E_n:=W_n([-K,K ])\leq\rho\eta_t\quad\mbox{and}\quad E:=W([-K,K])\leq\rho\eta_t,
\]
using trivial estimations, one gets the following chain of inequalities:
\begin{eqnarray*}
&&\sup_{x\in[-K,K ]}|\theta_{n+1}(x,\eta,[-K,K ])-\theta(x,\eta,[-K,K ] )|\\
&&\qquad\leq\sup_{x\in[-K,K ]}\int_{\T}|\pr_z (E_n) \pr_z ( E)|h_1(x,z)\,\di z \\
&&\qquad\quad{}+\sup_{x\in[-K,K ]}\int_{\T}\int_{\T}| \pr_{z_1,z_2} (E_n) - \pr_{z_1,z_2}(E) | h_2(x,z_1,z_2)\,\di z_1\,\di z_2\\
&&\qquad\leq\sup_{x,z\in\T}h_1(x,z)\cdot\int_{\T}|\pr_z(E_n)- \pr_z (E)|\,\di z\\
&&\qquad\quad{}+\sup_{x,z_1,z_2\in\T}h_2(x,z_1,z_2)\cdot\int_{\T}\int_{\T}|\pr_{z_1,z_2} (E_n) -\pr_{z_1,z_2}(E)|\,\di z_1\,\di z_2.
\end{eqnarray*}

By bounded convergence, both integrals in the last expression converge
to~0. The suprema are finite since $h_1$ and $h_2$ are bounded (see
Lemma \ref{lem:W_uniformly_positive}). So, $\theta_n (x,\eta,[-K,K
])$ uniformly converges to $\theta(x,\eta,[-K,K ])$ on $[-K,K ]$.
Therefore, there exists an index $N$ such that for $n\geq N,$
\[
\sup_{x\in[-K,K ]}|\theta_{n}(x,\eta,[-K,K ])-\theta(x,\eta,[-K,K] )|\leq\frac{r}{2}.
\]
Using the first inequality of (\ref{eq:bound_to_beta(x,eps)}), the
triangular inequality, (\ref{eq:delta+eps}) and Lemma \ref
{lem:W_uniformly_positive}, one can write
\begin{eqnarray*}
\sup_{x\in[-K,K ]}\pr_x \bigl(W_n([-K,K])\leq\eta\bigr)&\leq&\sup_{x\in[-K,K ]}\theta_n(x,\eta,[-K,K ])\\
&\leq&\sup_{x\in[-K,K ]}\theta(x,\eta,[-K,K ])\\
&&\hspace{-3pt}{}+\sup_{x\in[-K,K ]}|\theta_{n}(x,\eta,[-K,K ])\\
&&\hspace{51pt}{}-\theta(x,\eta,[-K,K] )|\\
&\leq&1-r+\frac{r}{2}=1-\frac{r}{2}
\end{eqnarray*}
for $n\geq N$. This gives the conclusion of the lemma.
\end{pf}

%s10 ###
\section{\texorpdfstring{The proof of the \protect\hyperref[15]{Main Lemma}}{The proof of the Main Lemma}}\label{sec:main}
We first repeat the \hyperref[15]{Main Lemma}.

\begin{main*}
There exist three positive numbers $\delta$, $q$, $K$ and an index $N$
such that
\[
\inf_{n>N}\inf_{x\in[-K,K]}\pr_x \bigl(\Z_n([0,K])>\delta\rho^n\,\&\,\Z_n([-K,0])>\delta\rho^n \bigr)> q.
\]
\end{main*}

\begin{pf}
Take $K$ as defined in Lemma \ref{lem:p2_uniformly_positive}.
Since $[-K,K]=[-K,0]\cup[0,K]$ and type 0 has probability 0 to occur,
it follows
directly from Lemma \ref{lem:p2_uniformly_positive} that one of $\pr
_x (\Z_n([0,K])>\delta\rho^n)$ and
$\pr_x (Z_n([-K,0])> \delta\rho^n )$ is larger than
$r/4$ for all $x\in[-K,K]$ and $n>N$.
But, then, by symmetry, \textit{both} of these probabilities are larger
than $r/4$.

Now, take any $x\in[-K,K]$. Since $K<\frac18$, it follows that with a
positive probability denoted by $p_{2,4}$, in the first generation, the
squares $Q_2$ and $Q_4$---with respective types $x_2$ and $x_4$ from a
subinterval of $[-K,K]$---will be present. But, by the above, these two
squares will, independently of each other and with probability at least
$r/4$, generate more than $\delta\rho^n$ squares with type in $[0,K]$
(resp. $[-K,0]$) in generation $n+1$. Thus, for all $x\in[-K,K]$ and $n>N,$
\[
\pr_x \bigl(\Z_{n+1}([0,K])>\delta\rho^n\,\&\,\Z_{n+1}([-K,0])>\delta\rho^n \bigr)> p_{2,4}\cdot\frac{r}4\cdot\frac{r}4.
\]
So, replacing $\delta$ by $\delta/\rho$, $N$ by $N+1$ and defining
$q=p_{2,4}r^2/16$, this proves the \hyperref[15]{Main Lemma}.
\end{pf}

\section*{Acknowledgment}

We wish to thank an anonymous referee for meticulously reading our
paper and proposing a large number of valuable improvements.

% imsref loaded by svajune.rapalyte, 2010-08-06 15:53:09

\printaddresses

\end{document}